\newtheorem{thm}{Theorem}[section]
\newtheorem{cor}[thm]{Corollary}
\newtheorem{lem}[thm]{Lemma}
\newtheorem{prop}[thm]{Proposition}
\newtheorem{defi}[thm]{Definition}
\newtheorem{conj}{Conjecture}
\theoremstyle{remark}
\newtheorem{rmq}{Remark}
\newcommand{\bfc}{\boldsymbol{c}}
\newcommand{\bfh}{\boldsymbol{h}}
\newcommand{\bfg}{\boldsymbol{g}}
\newcommand{\Jch}{\theta^{(\alpha)}}
\newcommand{\MchJack}{{\boldsymbol{\theta}}^{(\alpha,\gamma)}}
\newcommand{\Mch}{\boldsymbol{\widetilde\theta}^{(q,t)}}
\newcommand{\Jla}{J^{(\alpha)}_{\lambda}}
\newcommand{\tH}{\widetilde{H}^{(q,t)}}
\newcommand{\J}{{J}^{(q,t)}}
\newcommand{\Jsh}{{J}^{*}}
\newcommand{\JJack}{{\mathfrak J}^{(\alpha,\gamma)}}
\newcommand{\normJ}{{j}^{(q,t)}}
\newcommand{\normJt}{\widetilde{j}^{(\alpha,\gamma)}}
\newcommand{\nablaJ}{\boldsymbol{\nabla}}
\newcommand{\DeltaJ}{\boldsymbol{\Delta}}
\newcommand{\GammaJ}{\boldsymbol{\Gamma}}
\newcommand{\Pop}{\mathcal{P}}
\newcommand{\Top}{\mathcal{T}}
\newcommand{\Gammaplus}[1]{ {  \Gamma}^{(+)}_{#1}}
\newcommand{\GammaplusJ}[1]{ \mathbf{ \Gamma}^{(+)}_{#1}}
\DeclareMathOperator{\Exp}{Exp}
\definecolor{green}{RGB}{43,92,47}
\definecolor{blue}{RGB}{40,68,104}
\definecolor{red}{RGB}{254, 113, 96}
\definecolor{purple}{RGB}{102,0,51}
\definecolor{gray}{RGB}{224,224,224}
\definecolor{lightpurple}{RGB}{255, 249, 242}
\author[H.~Ben Dali]{Houcine Ben Dali}
\address{Université de Lorraine, CNRS, IECL, F-54000 Nancy\\
Universit\'e de Paris, CNRS, IRIF, F-75006 Paris, France}
\curraddr{Department of Mathematics, Harvard University, Cambridge, MA 02138, U.S.A.}
\email{\href{mailto:bendali@math.harvard.edu}{bendali@math.harvard.edu}}
\author[M.~D'Adderio]{Michele D'Adderio}
\address{Universit\`a di Pisa\\Dipartimento di Matematica\\ Largo Bruno Pontecorvo 5, 56127 Pisa\\ Italy}
\email{\href{mailto:michele.dadderio@unipi.it}{michele.dadderio@unipi.it}}
\title[Macdonald characters]{Macdonald characters from a new formula for Macdonald polynomials}
\begin{document}
\maketitle
\begin{abstract}
We introduce a new operator $\GammaJ$ on symmetric functions, which enables us to obtain a creation formula for Macdonald polynomials. This formula provides a connection between the theory of Macdonald operators initiated by Bergeron, Garsia, Haiman and Tesler, and shifted Macdonald polynomials introduced by Knop, Lassalle, Okounkov and Sahi.

We use this formula to introduce a two-parameter generalization of Jack characters, which we call Macdonald characters. Finally, we provide a change of variables in order to formulate several positivity conjectures related to these generalized characters. Our conjectures extend some important open problems on Jack polynomials, including some famous conjectures of Goulden and Jackson.
\end{abstract}

\section{Introduction}
\subsection{Jack and Macdonald polynomials}
Jack polynomials are symmetric functions depending on one parameter $\alpha$ which have been introduced by Jack \cite{Jack1970/1971}. 
The combinatorial analysis of Jack polynomials has been initiated by Stanley \cite{Stanley1989}
and a first combinatorial interpretation has been given by Knop and Sahi in terms of tableaux \cite{KnopSahi1997}. 
A second family of combinatorial objects related to Jack polynomials is given by \textit{maps}, which are roughly graphs embedded in surfaces. This connection has first been observed in the conjectures of Goulden and Jackson \cite{GouldenJackson1996} and important progress has recently been made in this direction \cite{ChapuyDolega2022,
BenDaliDolega2023} with a first ``topological expansion'' of Jack polynomials in terms of maps. 

Macdonald polynomials are symmetric polynomials introduced by Macdonald in 1989, which depend on two parameters $q$ and $t$. Jack polynomials can be obtained from Macdonald polynomials by taking an appropriate limit. Several combinatorial results on Jack polynomials have been generalized to the Macdonald case, in particular, an interpretation in terms of tableaux was established in \cite{HaglundHaimanLoehr2005}. However, no connection between Macdonald polynomials and maps is known, even conjecturally. 

While Jack polynomials were one of the original inspirations for Macdonald polynomials, the two objects have diverged a bit in recent years, with Jack polynomials being most interesting to those who study probability and maps, while Macdonald polynomials have been studied more in the context of coinvariants and related algebraic geometry (like affine Springer fibers, Hilbert schemes of points, knot invariants, etc.).

Our hope is to ``reunite'' Macdonald polynomials and Jack polynomials by showing how Macdonald polynomials are directly connected back to the work of Stanley, Goulden, Jackson, Lassalle, and others on Jack polynomials and their positivity properties. 

As a first step towards this ``reunification'', we introduce in the present article some new tools that make the parallel between the Jack and Macdonald stories more compelling.

First, we prove a creation formula (\cref{eq thm creationJqt 2,eq thm creationJqt}) for Macdonald polynomials, inspired from the one used in \cite{BenDaliDolega2023} to connect Jack polynomials to maps. Second, we use this formula to introduce a Macdonald analog of Jack characters (\cref{ssec intro characters}). Finally, we formulate a Macdonald version of some Jack conjectures, including Goulden and Jackson's Matchings-Jack and $b$-conjectures.

\subsection{Symmetric functions and plethysm}\label{ssec plethysm}

Consider the graded algebra $\Lambda=\oplus_{r\geq 0}\Lambda^{(r)}$ of symmetric functions in the alphabet $(x_1,x_2,\dots)$ with coefficients in $\mathbb{Q}(q,t)$. Let $p_\lambda$ and $h_\lambda$ denote  the power-sum and the complete symmetric functions in $(x_i)_{i\geq 1}$, respectively.
We use here a variable  $u$ to keep track of the degree of the functions, and an extra variable $v$; all the functions considered are in $\Lambda[v]\llbracket u\rrbracket$. Consider the \emph{Hall scalar product} defined by $\langle p_\mu,p_\nu\rangle =\delta_{\mu,\nu} z_\mu$, where $z_\mu$ is a numerical factor, see \cref{subsec Partitions}.
Let $f^\perp$ denote the adjoint of multiplication by $f\in \Lambda$ with respect to $\langle -, -\rangle$. 

We will use the \textit{plethystic notation}: if $E(q,t,u,v,x_1,x_2,\dots)\in\Lambda[v]\llbracket u\rrbracket$ and $f\in\Lambda$ then $f[E]$ is the image of $f$ under the algebra morphism defined by
\begin{align*}
 \Lambda[v]\llbracket u\rrbracket\longrightarrow &\Lambda[v]\llbracket u\rrbracket\\
  p_k\longmapsto &E(t^k,q^k,u^k,v^k,x_1^k,\dots)\quad \text{ for every }k\geq 1.  
\end{align*}
Set $X:=x_1+x_2+\dots$. Notice that $f[X]=f(x_1,x_2,\cdots)$ for any $f$.
Moreover, 
$$p_k\left[X\frac{1-q}{1-t}\right]=\frac{1-q^k}{1-t^k}p_k(x_1,x_2,\dots)\text{ and }p_\lambda[-X]=(-1)^{\ell(\lambda)}p_\lambda(x_1,x_2,\dots).$$

We consider the scalar product
$$\langle f[X],g[X]\rangle_{q,t}:=\left\langle f[X],g\left[X\frac{1-q}{1-t}\right]\right\rangle.$$
In particular 
\begin{equation} \label{eq:qtzlambda_def}
\langle p_\mu[X],p_\nu[X]\rangle_{q,t}=\delta_{\mu,\nu}z_\mu(q,t):=\delta_{\mu,\nu} z_\mu p_\mu\left[\frac{1-q}{1-t}\right].
\end{equation}

Finally, let $\Pop_Z$ be the operator such that 
$$\Pop_Z\cdot f[X]=\Exp[ZX]f[X],$$ i.e.\ the multiplication by the \emph{plethystic exponential} 
$$\Exp[ZX]:=\sum_{n\geq 0}h_n[ZX]=\sum_{\mu\in\mathbb Y}\frac{p_\mu[ZX]}{z_\mu},$$ (here $\mathbb{Y}$ denotes the set of integer partitions) and let
$$\Top_{Z}:=\sum_{\mu \in \mathbb{Y}}\frac{p_\mu[Z]p_\mu^\perp}{z_\mu}$$ be the translation operator, so that $\Top_Z\cdot f[X]=f[X+Z]$. Note that 
$\Pop_{Z+Z'}=\Pop_Z\cdot\Pop_{Z'}$.

\subsection{A new formula for Macdonald polynomials}
In \cite{BergeronGarsiaHaimanTesler1999}, the authors introduced a remarkable family of diagonal operators on a modified version of Macdonald polynomials. These operators, known as \emph{nabla} and \emph{delta} operators (see \cref{ssec modified Macdonald}), have a rich combinatorial structure and are closely related to the space of diagonal harmonics \cite{Haiman02, CarlssonMellit2018,HaglundRemmelWilson2018,DAdderioMellit2022}. 
We consider an analog of these operators for the integral form of Macdonald polynomials\footnote{We use boldface symbols to distinguish these operators from their relatives from Section~\ref{ssec modified Macdonald}.} (denoted $J^{(q,t)}_\lambda$); let $\nablaJ$ and $\DeltaJ_v$ be the operators on symmetric functions defined by
\begin{equation}
  \nablaJ\cdot J^{(q,t)}_\lambda\! =\! (-1)^{|\lambda|}\left(\prod_{\Box\in\lambda}q^{a'(\Box)}t^{-\ell'(\Box)}\right) J^{(q,t)}_\lambda,  \label{eq nablaJ}
\end{equation}
$$\DeltaJ_v\cdot J^{(q,t)}_\lambda\! =\! \prod_{\Box\in\lambda}\left(1-v\cdot q^{a'(\Box)}t^{-\ell'(\Box)}\right) J^{(q,t)}_\lambda,$$
where the products run over the cells of the Young diagram of  $\lambda$, and where $a'$ and $\ell'$ are defined in \cref{subsec Partitions}.

We also introduce the following operator\footnote{This operator is a close relative of the Theta operator in \cite{DAdderioRomero2023}, first introduced in \cite{DAdderioIraciVWyngaerd21}.} on $\Lambda[v]\llbracket u\rrbracket$
$$\GammaJ(u,v):=\DeltaJ_{1/v} \mathcal{P}_{\frac{uv(1-t)}{1-q}}
\DeltaJ_{1/v}^{-1}\ \ .$$ 
The polynomiality of $\GammaJ(u,v)$ in the variable $v$ is a consequence of the Pieri rule.

We can now state our new formula for Macdonald polynomials.

\begin{thm} \label{thm creationJqt}
	For any partition $\lambda=[\lambda_1,\lambda_2,\dots,\lambda_k]$, we have
	\begin{equation}\label{eq thm creationJqt}
		\GammaplusJ{\lambda_1}\GammaplusJ{\lambda_2}\cdots\GammaplusJ{\lambda_k}\cdot1=\J_{\lambda}\quad \text{ where }\quad  \GammaplusJ{m}:=[u^m]\nablaJ^{-1}\GammaJ(u,q^m)\nablaJ.
	\end{equation}        
\end{thm}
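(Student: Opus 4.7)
The plan is to prove the theorem by induction on the number of parts of $\lambda$, reducing it to the following key claim: for any partition $\mu$ and any integer $m \geq \mu_1$, one has $\GammaplusJ{m}\cdot \J_\mu = \J_{(m,\mu)}$, where $(m,\mu) := (m,\mu_1,\mu_2,\dots)$. Granted this claim, iterating along $\lambda$ (using $\lambda_i \geq \lambda_{i+1}$ at each step) produces $\J_\lambda$ from $\J_\emptyset = 1$.

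To establish the key claim, I first compute $\GammaJ(u,v)\cdot \J_\mu$ by unfolding the definition. The operators $\DeltaJ_{1/v}^{\pm 1}$ are diagonal on the Macdonald basis with eigenvalues $\prod_{\Box \in \lambda}(1 - q^{a'(\Box)}t^{-\ell'(\Box)}/v)^{\pm 1}$, while $\Pop_Z$ with $Z = uv(1-t)/(1-q)$ acts as multiplication by $\Exp[ZX] = \sum_{n\geq 0} u^n v^n g_n[X]$, where $g_n[X] := h_n[X(1-t)/(1-q)]$. Combining with the Pieri rule $g_n \cdot \J_\mu = \sum_\nu \psi_{\nu/\mu}(q,t)\,\J_\nu$ (summed over $\nu \supseteq \mu$ with $\nu/\mu$ a horizontal strip of size $n$), telescoping the ratio of $\DeltaJ_{1/v}$-eigenvalues into a product over $\nu/\mu$, and absorbing the factor $v^{|\nu/\mu|}$, one obtains
\begin{equation*}
\GammaJ(u,v)\cdot \J_\mu = \sum_{\nu} u^{|\nu/\mu|}\, \psi_{\nu/\mu}(q,t)\, \prod_{\Box \in \nu/\mu}\bigl(v - q^{a'(\Box)} t^{-\ell'(\Box)}\bigr)\, \J_\nu,
\end{equation*}
which in particular confirms the polynomiality in $v$ advertised after the definition. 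Conjugation by $\nablaJ$ multiplies each $\J_\nu$-coefficient by $(-1)^{|\nu/\mu|}\prod_{\Box \in \nu/\mu} q^{-a'(\Box)}t^{\ell'(\Box)}$, so specializing $v = q^m$ and extracting $[u^m]$ gives
\begin{equation*}
\GammaplusJ{m}\cdot \J_\mu = \sum_{\substack{|\nu/\mu|=m,\\ \nu/\mu \text{ horizontal strip}}} \psi_{\nu/\mu}(q,t)\, \prod_{\Box \in \nu/\mu}\bigl(1 - q^{m-a'(\Box)} t^{\ell'(\Box)}\bigr)\, \J_\nu.
\end{equation*}

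I next argue that only $\nu = (m,\mu)$ survives. A factor of the product vanishes exactly when $\Box = (1,m+1)$; under $\mu_1 \leq m$ this cell belongs to $\nu/\mu$ if and only if $\nu_1 > m$, so nonvanishing forces $\nu_1 \leq m$. Together with the horizontal-strip condition and $|\nu/\mu|=m$, the telescoping estimate $\sum_i(\nu_i-\mu_i) \leq (m-\mu_1)+\sum_{i\geq 2}(\mu_{i-1}-\mu_i) = m$, with equality if and only if $\nu_1 = m$ and $\nu_i = \mu_{i-1}$ for every $i \geq 2$, pins down $\nu = (m,\mu)$ as the unique surviving partition. What remains is to verify that the coefficient attached to $\nu = (m,\mu)$ is exactly $1$: the cells of $(m,\mu)/\mu$ split by row as $\{(1,j):\mu_1 < j \leq m\}\cup \bigcup_{i\geq 2}\{(i,j):\mu_i < j \leq \mu_{i-1}\}$, the product $\prod_{\Box \in (m,\mu)/\mu}(1 - q^{m-a'(\Box)} t^{\ell'(\Box)})$ factorizes accordingly, and plugging in Macdonald's explicit formula for $\psi_{(m,\mu)/\mu}(q,t)$ (adapted to the integral normalization $\J$) and cancelling row by row should produce $1$. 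This row-by-row cancellation against the Pieri coefficient is the one step that is not purely formal, and it is the main computational hurdle of the argument.
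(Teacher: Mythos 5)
Your computation of $\GammaJ(u,v)\cdot\J_\mu$ via the Pieri rule and the telescoping of $\DeltaJ_{1/v}$-eigenvalues is correct, and so is the vanishing analysis showing that, when $m\geq\mu_1$, the only horizontal strip of size $m$ that survives the factor $\prod_{\Box}(1-q^{m-a'(\Box)}t^{\ell'(\Box)})$ after extracting $[u^m]$ is $\nu=(m,\mu)$. This is precisely the ``direct Pieri'' route that the paper mentions in the remark following \cref{thm creation modified} and deliberately sets aside as being more computational. However, there is a genuine gap at the final step: your argument only establishes $\GammaplusJ{m}\cdot\J_\mu = c\,\J_{(m,\mu)}$ for some scalar $c=c(m,\mu)$, and the claim $c=1$, i.e.\ the identity
\begin{equation*}
\eta_{\mu,(m,\mu)}\prod_{\Box\in(m,\mu)/\mu}\bigl(1-q^{m-a'(\Box)}t^{\ell'(\Box)}\bigr)=1,
\end{equation*}
is merely asserted to ``should produce $1$'' by row-by-row cancellation. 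That cancellation is the crux of the whole argument: it requires Macdonald's explicit formula for the Pieri coefficient $\eta_{\mu,(m,\mu)}$ in the $J$-normalization, and verifying that its arm/leg product matches the co-arm/co-leg product above is a nontrivial piece of bookkeeping which you have not carried out. As written, the proposal is a plausible plan rather than a complete proof.

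The paper avoids this coefficient check entirely by proving the result in two stages. First, the generating-function identity of \cref{thm creation modified 2} is obtained from the Garsia--Mellit five-term relation and the Garsia--Haiman--Tesler identity, which fixes the overall normalization with no reference to Pieri coefficients. Second, the Pieri rule enters only in \cref{prop stability and vhanishing condition}, where one needs to know \emph{which} terms vanish (the criterion involving the cell $\Box_0=(1,s+1)$) but never the values of the surviving coefficients; extracting $[u^{|\lambda|}]$ then collapses the sum of \cref{eq formula modified} to the single tuple $(\lambda_1,\dots,\lambda_\ell)$, and the scalar is inherited from the already-normalized generating function. To complete your argument you would either have to evaluate $\eta_{\mu,(m,\mu)}$ explicitly and perform the cancellation, or find an independent way to pin down $c$ — for instance by pairing against $\J_{(m,\mu)}$ in $\langle\cdot,\cdot\rangle_{q,t}$ and comparing with $\normJ_{(m,\mu)}$, which is itself a ratio of hook products and thus another computation of comparable difficulty.
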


It turns out that \cref{thm creationJqt} is an easy consequence of the following \emph{creation formula}.

\begin{thm}\label{thm creationJqt 2}
For any partition $\lambda=[\lambda_1,\lambda_2,\dots,\lambda_k]$, we have
 \begin{align}\label{eq thm creationJqt 2}
\GammaJ(u,q^{\lambda_1})\GammaJ(t^{-1}u,q^{\lambda_2})\cdots \GammaJ(t^{-(k-1)}u,q^{\lambda_k})\cdot 1 & =t^{-n(\lambda)}\nablaJ \Top_{\frac{1}{u(1-t)}} \J_\lambda[uX],
	\end{align}
\end{thm}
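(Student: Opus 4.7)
The plan is to argue by induction on $k=\ell(\lambda)$. The base case $k=0$ is immediate: the empty product on the left equals $1$, while on the right $\nablaJ\,\Top_{1/(u(1-t))}\,\J_\emptyset[uX]=\nablaJ\cdot 1=1$ since the empty partition has no cells.

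For the inductive step, set $\mu=[\lambda_2,\ldots,\lambda_k]$, apply the inductive hypothesis to $\mu$ with $u$ replaced by $t^{-1}u$, and observe that $n(\lambda)-n(\mu)=|\mu|$ while the translation operators rewrite plethystically as
$$\Top_{t/(u(1-t))}\J_\mu[t^{-1}uX]=\J_\mu\!\left[t^{-1}uX+\tfrac{1}{1-t}\right],\qquad \Top_{1/(u(1-t))}\J_\lambda[uX]=\J_\lambda\!\left[uX+\tfrac{1}{1-t}\right].$$
This reduces \cref{thm creationJqt 2} to the single-step identity
$$\GammaJ(u,q^{\lambda_1})\cdot\nablaJ\cdot\J_\mu\!\left[t^{-1}uX+\tfrac{1}{1-t}\right]=t^{-|\mu|}\,\nablaJ\cdot\J_\lambda\!\left[uX+\tfrac{1}{1-t}\right].$$

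To attack this identity, I would expand both sides in the Macdonald basis using the addition formula $\J_\xi[X+Y]=\sum_\eta \J_{\xi/\eta}[Y]\,\J_\eta[X]$, so that the right-hand side becomes an explicit series whose coefficients are the skew-Macdonald evaluations $\J_{\lambda/\eta}[1/(1-t)]$ (precisely the quantities linking Macdonald polynomials to their shifted cousins of Knop--Sahi--Okounkov). On the left-hand side, I would unfold $\GammaJ(u,q^{\lambda_1})=\DeltaJ_{1/q^{\lambda_1}}\,\mathcal{P}_Z\,\DeltaJ_{1/q^{\lambda_1}}^{-1}$ with $Z=uq^{\lambda_1}(1-t)/(1-q)$, commute $\DeltaJ_{1/q^{\lambda_1}}$ past $\nablaJ$ (both are diagonal in the Macdonald basis), and use the Pieri-type expansion of $\mathcal{P}_Z$ acting on a Macdonald polynomial to produce a second explicit expansion of the left-hand side.

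The main obstacle, and the heart of the proof, is matching the two expansions term by term. This amounts to a nontrivial identity between products over cells of Young diagrams: the hook-length factors from the Pieri rule, the content factors $q^{a'(\Box)}t^{-\ell'(\Box)}$ from $\nablaJ$, and the factors $1-q^{a'(\Box)-\lambda_1}t^{-\ell'(\Box)}$ from $\DeltaJ_{1/q^{\lambda_1}}$ must telescope so as to reproduce exactly the skew evaluations $\J_{\lambda/\eta}[1/(1-t)]$. The specialization $v=q^{\lambda_1}$ is precisely the one that kills every contribution outside of honest horizontal strips of length $\lambda_1$, paralleling the mechanism exploited in \cite{BenDaliDolega2023} for the Jack analogue; lifting that cancellation to the Macdonald level is the delicate technical step I expect to dominate the argument.
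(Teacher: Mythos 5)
Your inductive reduction is arithmetically sound: with $\mu=[\lambda_2,\dots,\lambda_k]$ one indeed has $n(\lambda)=n(\mu)+|\mu|$, and the single-step identity you extract,
\begin{equation*}
\GammaJ(u,q^{\lambda_1})\cdot\nablaJ\,\J_\mu\!\left[t^{-1}uX+\tfrac{1}{1-t}\right]=t^{-|\mu|}\,\nablaJ\,\J_\lambda\!\left[uX+\tfrac{1}{1-t}\right],
\end{equation*}
is equivalent to the theorem. But you then stop: you announce that the two expansions ``must telescope'' to produce the skew evaluations and that this ``delicate technical step'' would ``dominate the argument.'' That is the entire content of the claim, and it is left unproven. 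Two further warnings about that step. First, the addition formula you quote is the one for the $P$-normalization; for the integral form $\J$ there are nontrivial $\normJ$-factors, and you would need to track them precisely. Second, your remark that specializing $v=q^{\lambda_1}$ ``kills every contribution outside of honest horizontal strips of length $\lambda_1$'' is the mechanism behind \cref{thm creationJqt} (the $\GammaplusJ{m}=[u^m]\cdots$ version), not behind \cref{thm creationJqt 2}: the present statement is an identity of full power series in $u$, with no coefficient extraction, so that vanishing phenomenon is not what makes it true. In short, the inductive reduction is fine but the proof proper is missing.

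The paper proceeds quite differently and avoids any Pieri-level bookkeeping. It first establishes the modified analog (\cref{thm creation modified 2}) via three ingredients: (i) \cref{lem Gamma}, which rewrites $\Gamma(u,v)=\nabla\,\Pop_{u/M}\,\nabla^{-1}\,\Pop_{uv/(1-q)}\,\nabla\,\Pop_{-tu/M}\,\nabla^{-1}$ using the Garsia--Mellit five-term relation \cref{eq:fiveterm}; (ii) the multiplicativity $\Pop_{Z}\Pop_{Z'}=\Pop_{Z+Z'}$, which telescopes the $k$-fold product into a single plethystic exponential; and (iii) the Garsia--Haiman--Tesler reproducing identity \cref{thm Tesler}, which identifies that exponential with $\nabla\Top_{1/u}\widetilde H_\lambda[uX]$. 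No induction on $k$ and no term-by-term matching is required. \cref{thm creationJqt 2} then follows by conjugating through the ring map $\phi$ from \cref{sec Proof}. If you want to pursue your inductive route, you would essentially have to re-derive the Garsia--Haiman--Tesler identity one step at a time, which is why the paper packages it as a black box instead.
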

\noindent 
where $n(-)$ is  a statistic on Young diagrams, see \cref{sec preliminaries}.
\noindent In \cref{sec Proof}, we prove analogous results for modified Macdonald polynomials (cf.\ \cref{thm creation modified 2}) from which we deduce Theorems \ref{thm creationJqt 2} and \ref{thm creationJqt}.

In addition to giving a direct construction of Macdonald polynomials, \cref{thm creationJqt 2} provides a dual approach to study the structure of these polynomials. Indeed, thanks to \cref{eq thm creationJqt 2} we can think of $\J_\lambda$ as a function in the partition $\lambda$ described by the alphabet $(q^{\lambda_1},q^{\lambda_2},\dots)$. This dual approach plays a key role in this paper and is used in \cref{sec characters} to introduce a $q,t$-deformation of the characters of the symmetric group.

\subsection{Shifted symmetric functions and Macdonald characters}\label{ssec intro characters}
Kerov and Olshanski have introduced in \cite{KerovOlshanski1994} a new approach to study the asymptotics of the characters of the symmetric group, in which the characters are thought of functions in Young diagrams. These functions are known to have a structure of \textit{shifted symmetric functions}.

This approach has been generalized to the Jack case by Lassalle, who introduced \textit{Jack characters} in \cite{Lassalle2008b}. The latter are directly related to the coefficients of Jack polynomials in the power-sum basis and have been useful to understand asymptotic behavior of large Young diagrams sampled with respect to a Jack deformed Plancherel measure \cite{CuencaDolegaMoll2023,DolegaFeray2016}.  Recently, a combinatorial interpretation of Jack characters in terms of maps on non orientable surfaces has been proved in \cite{BenDaliDolega2023}, answering a positivity conjecture of Lassalle. 

We extend here this investigation by introducing \textit{Macdonald characters}. We start by recalling the definition of $(q,t)$-shifted symmetric polynomials, due to Okounkov \cite{Okounkov1998}.
\begin{defi}\label{def:shiftedsymmetric}
    We say that a polynomial in $k$ variables $f(v_1,\dots,v_k)$ is (q,t)-shifted symmetric (or simply shifted symmetric) if it is symmetric in the variables $v_1,v_2t^{-1},\dots,v_k t^{1-k}.$ 
    
    A shifted symmetric function $f(v_1,v_2,\dots)$ is a sequence $(f_k)_{k\geq1}$ of polynomials of bounded degrees, such that for each $k$ the function $f_k$ is a shifted symmetric polynomial in $k$ variables and which satisfy the following compatibility property
$$f_{k+1}(v_1,\dots,v_k,1)=f_{k}(v_1,\dots,v_k).$$
\end{defi}
We now use the operator $\GammaJ$ to introduce a two parameter deformation $\Mch_\mu$ of the characters of the symmetric group. 
\begin{defi}
Fix a partition $\mu$ and an integer $k\geq 1$. The Macdonald character with $k$ variables associated to $\mu$  is the function $\Mch_{\mu,k}(v_1,v_2,\dots,v_k)$ defined by 
\begin{align}\label{eq def characters}
    \Mch_{\mu,k}(v_1,v_2,\dots ):
    =\left\langle p_\mu,\GammaJ(1,v_1)\GammaJ(t^{-1},v_2)\cdots \GammaJ(t^{-{k-1}},v_k) \cdot 1\right\rangle_{q,t}
\end{align}    
\end{defi}

It turns out that these characters have a structure of shifted symmetric functions.
\begin{thm}\label{thm Macdonald characters}
Fix a partition $\mu$. For any $k\geq 1$, the character $\Mch_{\mu,k}$ is a shifted symmetric polynomial.
Moreover, the sequence $(\Mch_{\mu,k})_{k\geq 1}$ defines a shifted symmetric function $\Mch_\mu$, which will be called the \emph{Macdonald character} associated to the partition $\mu$.
\end{thm}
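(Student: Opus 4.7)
The statement packages three assertions: $\Mch_{\mu,k}$ is a polynomial in $v_1,\ldots,v_k$, it is $(q,t)$-shifted symmetric, and the family $(\Mch_{\mu,k})_k$ satisfies the stability relation of \cref{def:shiftedsymmetric}. I would handle these in turn.

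\emph{Polynomiality.} The paper already records that $\GammaJ(u,v)$ is polynomial in $v$ (a consequence of the Pieri rule). Applying this factor by factor, the composition $\GammaJ(1,v_1)\,\GammaJ(t^{-1},v_2)\cdots\GammaJ(t^{-(k-1)},v_k)\cdot 1$ lies in $\Lambda[v_1,\ldots,v_k]\llbracket u\rrbracket$, and pairing against $p_\mu$ under $\langle-,-\rangle_{q,t}$ yields $\Mch_{\mu,k}\in\mathbb{Q}(q,t)[v_1,\ldots,v_k]$.

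\emph{Shifted symmetry via interpolation.} The strategy is to pin down $\Mch_{\mu,k}$ at partition alphabets, then interpolate by a shifted symmetric function. Setting $u=1$ in \cref{thm creationJqt 2}, for every partition $\lambda=(\lambda_1\geq\cdots\geq\lambda_k\geq 0)$,
$$\Mch_{\mu,k}(q^{\lambda_1},\ldots,q^{\lambda_k}) = t^{-n(\lambda)}\bigl\langle p_\mu,\nablaJ\,\Top_{1/(1-t)}\J_\lambda\bigr\rangle_{q,t}.$$
The right-hand side depends only on $\lambda$ as a partition. I would then expand $\Top_{1/(1-t)}\J_\lambda=\J_\lambda[X+(1-t)^{-1}]$ in the $\J$-basis via the Knop-Sahi/Okounkov binomial theorem: its coefficients are (appropriately normalized) values of shifted Macdonald polynomials $\Jsh_\nu$ at $\lambda$, which are shifted symmetric in the alphabet $(q^{\lambda_i})_i$, and the $t^{-n(\lambda)}$ gets absorbed into their standard normalization. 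Pairing with $p_\mu$ then writes the right-hand side as the evaluation at $v_i=q^{\lambda_i}$ of an explicit shifted symmetric polynomial $g(v_1,\ldots,v_k)\in\mathbb{Q}(q,t)[v_1,\ldots,v_k]$ of degree bounded by $|\mu|$. Since $\Mch_{\mu,k}$ and $g$ are two polynomials of bounded degree agreeing on the Zariski-dense set of partition alphabets, they coincide, and $\Mch_{\mu,k}=g$ is shifted symmetric.

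\emph{Stability.} It suffices to check $\GammaJ(t^{-k},1)\cdot 1=1$. Using $\DeltaJ_{1/v}^{-1}\cdot 1=1$ (since $J_\emptyset=1$ is an eigenvector with eigenvalue $1$) and the Cauchy identity $\Exp[YX(1-t)/(1-q)]=\sum_\lambda \J_\lambda[X]\J_\lambda[Y]/\langle\J_\lambda,\J_\lambda\rangle_{q,t}$ applied at $Y=uv$ (only one-row partitions contribute, as $\J_\lambda[1]=0$ for $\ell(\lambda)\geq 2$), a short computation gives
$$\GammaJ(u,v)\cdot 1=\sum_{n\geq 0} u^n\,\J_{(n)}[X]\,\J_{(n)}[1]\,\frac{\prod_{j=1}^{n}(v-q^{j-1})}{\langle\J_{(n)},\J_{(n)}\rangle_{q,t}}.$$
At $v=1$ the factor $v-q^0=0$ kills every term with $n\geq 1$, leaving the constant $1$. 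Hence $\Mch_{\mu,k+1}(v_1,\ldots,v_k,1)=\Mch_{\mu,k}(v_1,\ldots,v_k)$, and the sequence assembles into a shifted symmetric function $\Mch_\mu$.

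\emph{Expected main obstacle.} The crux is the shifted-symmetry step: upgrading the partition-alphabet data coming from \cref{thm creationJqt 2} to the shifted symmetry of the full polynomial $\Mch_{\mu,k}$. This requires the explicit binomial expansion of $\J_\lambda[X+(1-t)^{-1}]$ in shifted Macdonald polynomials, a precise tracking of the normalization (including the somewhat mysterious $t^{-n(\lambda)}$ prefactor), and a degree control on $\Mch_{\mu,k}$ to invoke the density argument; an alternative but technically heavier route would be to establish a direct Heisenberg-type commutation $\GammaJ(u,a)\GammaJ(t^{-1}u,b)=\GammaJ(u,bt^{-1})\GammaJ(t^{-1}u,at)$ between adjacent operators.
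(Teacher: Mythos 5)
Your proposal takes a genuinely different route from the paper for the shifted-symmetry step, and that step is where the gap lies. The paper proves \cref{thm Macdonald characters} in one line by invoking \cref{thm shifted-gamma}, which identifies $\Mch_{\mu,k}=p^*_\mu(v_1,\dots,v_k)$. The shifted symmetry there is not obtained by interpolation: it comes directly from the operator identity \cref{eq GammaJ operator},
\[
\GammaJ(u,v_1)\cdots \GammaJ(t^{-(k-1)} u,v_k)\cdot 1 = \nablaJ \Pop_{\frac{-tu}{1-q}}\nablaJ^{-1}\Exp\!\left[\frac{-ut^{k-1}X}{1-q}-\frac{u(1-t)X}{1-q}\sum_{1\leq i\leq k}t^{1-i}v_i\right],
\]
whose right-hand side depends on $(v_1,\dots,v_k)$ only through the power sums $p_n[v_1+t^{-1}v_2+\dots+t^{1-k}v_k]$ appearing in the plethystic exponential. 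Shifted symmetry (and the degree bound $\deg\Mch_{\mu,k}\le|\mu|$) therefore drop out immediately, with no appeal to shifted Macdonald polynomials and no interpolation. The stability $v_{k+1}=1$ is read off the same formula, since the extra summand $t^{-k}(1-v_{k+1})$ vanishes. Your stability computation of $\GammaJ(u,v)\cdot 1$ via the Cauchy kernel and $\J_\lambda[1]=(t;q)_m\delta_{\lambda,[m]}$ is correct and gives the same conclusion.

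The concern is your treatment of shifted symmetry. As you yourself flag, it rests on three things you would still have to supply: (a) the Knop--Sahi/Okounkov binomial formula for $\J_\lambda\!\left[X+\tfrac{1}{1-t}\right]$ with a careful bookkeeping of the $t^{-n(\lambda)}$ normalization (this result is not stated anywhere in the paper, so you would be importing a substantial external input the authors avoid); (b) a degree bound on $\Mch_{\mu,k}$, which you assume but never derive --- the Pieri-rule polynomiality alone does not give the degree in the $v_i$, whereas the paper's operator formula does; and (c) the claim that the set of partition alphabets $\{(q^{\lambda_1},\dots,q^{\lambda_k})\}$ determines a polynomial of bounded degree. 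Point (c) is in fact true (fix $\lambda_2\ge\cdots\ge\lambda_k$, let $\lambda_1$ range over infinitely many values $\ge\lambda_2$, conclude $R(v_1,q^{\lambda_2},\dots,q^{\lambda_k})\equiv 0$, then induct on $k$), but it is not ``Zariski-dense'' in the usual sense and the argument needs to be spelled out. Without these three pieces, what you have is a plausible plan rather than a proof; the paper's use of \cref{eq GammaJ operator} bypasses all three at once. Your suggested alternative commutation relation $\GammaJ(u,a)\GammaJ(t^{-1}u,b)=\GammaJ(u,bt^{-1})\GammaJ(t^{-1}u,at)$ is consistent with \cref{eq GammaJ operator} when applied to $1$, but you would still need to prove it as an operator identity, and that seems harder than the route the paper takes.
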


Taking an appropriate limit (cf.\ \cref{prop Jack characters}), one can recover Jack characters from Macdonald characters, and hence also the characters of the symmetric group.

It follows from \cref{thm creation modified 2} that Macdonald characters are directly related to the expansion of Macdonald polynomials in the power-sum basis. We use the creation formula of \cref{thm creationJqt 2} to deduce properties of the Macdonald characters which generalize results known in the Jack case. 

In particular, we prove that they form a basis of the space of shifted symmetric functions which lead us to introduce their structure coefficients $\bfg^\pi_{\mu,\nu}$, see \cref{cor char basis} and \cref{eq structure coefficients}.
We also make a connection between Macdonald characters and shifted Macdonald polynomials introduced in \cite{Lassalle1998,Okounkov1998}, see Equations \eqref{eq isomorphism *} and \eqref{eq Mch-p_mu^*}. 

Furthermore, Macdonald characters and their structure coefficients seem to have interesting positivity properties which we investigate by introducing a new parametrization for Macdonald polynomials.

\subsection{A new parametrization and Macdonald version of some Jack conjectures}
In \cref{sec Jack polynomials} we introduce a new parametrization $(\alpha,\gamma)$ for Macdonald polynomials which is related to Jack polynomials, see \cref{eq parametrization}. We show that this parametrization gives a natural way to give a Macdonald version of some famous conjectures about Jack polynomials.  In particular, we formulate two positivity conjectures about Macdonald characters $\Mch_\mu$ (see \cref{conj lassale macdonald}) and their structure coefficients (see \cref{conj structure coeff}). These conjectures suggest that the characters $\Mch_\mu$ have a combinatorial structure  which generalizes the one given by maps and that we hope to investigate in future works. 
We also provide a Macdonald generalization of Stanley's conjecture about the structure coefficients of Jack polynomials, see \cref{conj Stanley-Macdonald}.

In \cite{GouldenJackson1996}, Goulden and Jackson introduced two conjectures which suggest that two  families of coefficients, $c^\pi_{\mu,\nu}(\alpha)$ and $h^\pi_{\mu,\nu}(\alpha)$, obtained from the expansion of some Jack series satisfy integrality and positivity properties. These conjectures,  known as the \textit{Matching-Jack conjecture} and the \textit{$b$-conjecture}, have also a combinatorial interpretation related to counting weighted maps on non-orientable surfaces.

The Matching-Jack and the $b$-conjectures are still open despite many partial results \cite{DolegaFeray2016,DolegaFeray2017,ChapuyDolega2022,BenDali2022a,BenDali2023}. These works involve various techniques including representation theory, random matrices and differential equations.

In \cref{sec GJ conjectures}, we introduce two families of coefficients $\bfc^\pi_{\mu,\nu}(\alpha,\gamma)$ and $\bfh^\pi_{\mu,\nu}(\alpha,\gamma)$ generalizing the coefficients of the Matchings-Jack and the $b$-conjecture. These coefficients are obtained from the expansion of some Macdonald series with the parametrization $(\alpha,\gamma)$, see \cref{eq def c,eq def h}.  

It turns out that the coefficients $\bfc^\pi_{\mu,\nu}$ are a special case of the structure coefficients of Macdonald characters $\Mch_\mu$, see \cref{prop g c}.
We also establish in \cref{prop coef c-supernabla} a connection between these coefficients and the \emph{super nabla} operator recently introduced in \cite{BergeronHaglundIraciRomero2023}.

We hope that our Macdonald generalization of these conjectures could reveal new combinatorial structures of Macdonald polynomials, in particular in connection with the enumeration of maps. Moreover, generalizing the open problems  about Jack polynomials (the $b$-conjecture, Stanley's conjecture...) to the Macdonald setting might provide a new point of view to approach these conjectures, and give the possibility to use the tools provided by the theory of Macdonald polynomials, which do not
all have interesting analogues in the Jack story.

\subsection{Outline of the paper}
The paper is organized as follows. In \cref{sec preliminaries}, we give some preliminaries and notation related to partitions and Macdonald polynomials. We prove the main theorem in \cref{sec creation formula}. We introduce Macdonald characters in \cref{sec characters}. We introduce several conjectures related to these characters in \cref{sec:generalized conjectures}. 

\subsection{Acknowledgments}

The first author was partially supported by the LUE DrEAM project of Université de Lorraine. He is also grateful to Valentin Féray and Guillaume Chapuy for many enlightening discussions.

The second author was partially supported by PRIN 2022A7L229 \emph{ALTOP}, INDAM research group GNSAGA, and ARC grant ``From algebra to combinatorics, and back''.

The authors are grateful to Hong Chen for pointing out a missing factor in Theorem~3.3 in the first version of this paper. They also thank the anonymous reviewers for valuable comments that helped improve the presentation of the paper.

\section{Preliminaries}\label{sec preliminaries}

For the results of this section we refer to \cite{DAdderioRomero2023,Macdonald1995}.
\subsection{Partitions}\label{subsec Partitions}

A \textit{partition} $\lambda=[\lambda_1,...,\lambda_\ell]$ is a weakly decreasing sequence of positive integers $\lambda_1\geq...\geq\lambda_\ell>0$. We denote by $\mathbbm{Y}$ the set of all integer partitions. The integer $\ell$ is called the \textit{length} of $\lambda$ and is denoted $\ell(\lambda)$. The size of $\lambda$ is the integer $|\lambda|:=\lambda_1+\lambda_2+...+\lambda_\ell.$ If $n$ is the \textit{size} of $\lambda$, we say that $\lambda$ is a partition of $n$ and we write $\lambda\vdash n$. The integers $\lambda_1$,...,$\lambda_\ell$ are called the \textit{parts} of $\lambda$. For $i\geq 1$, we denote $m_i(\lambda)$ the number of parts of size $i$ in $\lambda$. We then set
$$z_\lambda:=\prod_{i\geq1}m_i(\lambda)!i^{m_i(\lambda)}.$$ 
We denote by $\leq$ the \emph{dominance} partial ordering on partitions, defined by 
$$\mu\leq\lambda \iff |\mu|=|\lambda|\hspace{0.3cm} \text{ and }\hspace{0.3cm} \mu_1+...+\mu_i\leq \lambda_1+...+\lambda_i \text{ for } i\geq1,$$
where we set $\lambda_j=0$ for $j>\ell(\lambda)$.

\noindent We identify a partition  $\lambda$ with its \textit{Young diagram}\footnote{One should picture $(i,j)\in \lambda$ as being a square box in the $(i,j)$-entry of a matrix, as it is custom in the English notation of tableaux.}, defined by 
$$\lambda:=\{(i,j)\mid 1\leq i\leq \ell(\lambda),1\leq j\leq \lambda_i\}.$$
\textit{The conjugate partition} of $\lambda$, denoted $\lambda'$, is the partition associated to the Young diagram obtained by reflecting the diagram of $\lambda$ with respect to the line $j=i$:
$$\lambda':=\{(i,j)\mid 1\leq j\leq \ell(\lambda),1\leq i\leq \lambda_j\}.$$
Fix a cell $\Box:=(i,j)\in\lambda$. Its \textit{arm} and \textit{leg} are respectively given by 
$$a_\lambda(\Box):=|\{(i,c)\in\lambda,c>j\}|=\lambda_i-j, \text{ and } \ell_\lambda(\Box):=|\{(r,j)\in\lambda,r>i\}|=(\lambda')_j-i.$$
Similarly, the \textit{co-arm} and \textit{co-leg} are defined by 
$$a'_\lambda(\Box):=|\{(i,c)\in\lambda,c<j\}|=j-1, \text{ and } \ell'_\lambda(\Box):=|\{(r,j)\in\lambda,r<i\}|=i-1.$$

Finally, let the  statistic $n$ on Young diagram 
\begin{equation}\label{eq n}
  n(\lambda):=\sum_{1\leq i\leq \ell(\lambda)}\lambda_{i}(i-1)=\sum_{\Box\in\lambda}\ell_\lambda'(\Box).  
\end{equation}

\subsection{Integral form of Macdonald polynomials:}\label{ssec integral form}

For the results in this section we refer to \cite[Chapter VI]{Macdonald1995} and \cite{GarsiaTeslerAdvances96}.

Macdonald has established the following characterization theorem for Macdonald  polynomials.
\begin{thm}
The Macdonald polynomials $\J_\lambda$ are the unique family of symmetric functions such that
\begin{itemize}
    \item For any $\lambda$,
    $$\J_\lambda=\sum_{\mu\leq \lambda}v^\lambda_\mu m_\mu,$$
for some coefficients $v^\lambda_\mu$.

\item For any partitions $\lambda$ and $\rho$, $$\left\langle\J_\lambda,\J_\rho\right\rangle_{q,t}=\delta_{\lambda,\rho}\normJ_\lambda, $$
where 
\begin{equation}\label{eq j norm}
    \normJ_\lambda:=\prod_{\Box\in\lambda}\left(1-q^{a_\lambda(\Box)+1}t^{\ell_\lambda(\Box)}\right)\left(1-q^{ a_\lambda(\Box)}t^{\ell_\lambda(\Box)+1}\right)
\end{equation}
\end{itemize}
Moreover, for every $r\in \mathbb{N}$ the set $\{\J_\lambda\mid \lambda\vdash r\}$ is a basis of $\Lambda^{(r)}$.
\end{thm}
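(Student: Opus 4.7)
The plan is to follow Macdonald's classical strategy via a self-adjoint difference operator, as in \cite[Chapter VI]{Macdonald1995}. Working first with finitely many variables $x_1,\dots,x_N$, I would introduce the Macdonald operator
\[
D_N \;=\; \sum_{i=1}^N \left(\prod_{j\neq i}\frac{tx_i-x_j}{x_i-x_j}\right) T_{q,x_i},
\]
where $T_{q,x_i}$ sends $x_i\mapsto qx_i$. A direct computation (pushing the shift operators past the Weyl denominator) shows $D_N$ is self-adjoint with respect to $\langle -,-\rangle_{q,t}$. A second, purely combinatorial, computation shows that $D_N$ is upper-triangular in the monomial basis $(m_\lambda)_{\lambda\vdash r}$ with respect to the dominance order, with eigenvalues $\sum_i q^{\lambda_i}t^{N-i}$ that are pairwise distinct for $N$ large enough. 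These three facts are the technical core and would be checked separately.

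From this, existence follows immediately: the triangular eigenvalue problem for $D_N$ has a unique family of eigenvectors of the form $P_\lambda = m_\lambda+\sum_{\mu<\lambda}c^\lambda_\mu m_\mu$, and self-adjointness together with distinct eigenvalues forces $\langle P_\lambda, P_\rho\rangle_{q,t}=0$ for $\lambda\neq\rho$. Passing to the inverse limit $N\to\infty$ gives well-defined elements $P_\lambda\in \Lambda^{(r)}$, and one then sets $\J_\lambda := c_\lambda P_\lambda$ with $c_\lambda := \prod_{\Box\in\lambda}(1-q^{a_\lambda(\Box)}t^{\ell_\lambda(\Box)+1})$, which pins down the leading coefficient $v^\lambda_\lambda$. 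Since the $m_\lambda$ are a basis of $\Lambda^{(r)}$ and the change-of-basis matrix from $(m_\lambda)$ to $(\J_\lambda)$ is triangular with non-zero diagonal, the $\J_\lambda$ form a basis.

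Uniqueness is essentially Gram--Schmidt in disguise: given any family $(Q_\lambda)$ satisfying the triangular expansion and the orthogonality with the prescribed norms, I would argue by induction on $\lambda$ in the dominance order. The triangularity together with orthogonality to all $Q_\mu$ with $\mu<\lambda$ determines $Q_\lambda$ up to a scalar, and the norm condition $\langle Q_\lambda,Q_\lambda\rangle_{q,t}=\normJ_\lambda$ (together with the chosen normalization of the leading coefficient) then forces $Q_\lambda=\J_\lambda$.

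The main obstacle is the computation of the squared norm $\normJ_\lambda$ itself. The most efficient route I would take is the one of Macdonald: first prove the Pieri rule for the multiplication operator $g_1^\perp$ (or equivalently for multiplication by $e_1$) on the monic $P_\lambda$'s by extracting coefficients from $D_N$; then use the adjunction $\langle g_1\, P_\mu, P_\lambda\rangle_{q,t}=\langle P_\mu, g_1^\perp P_\lambda\rangle_{q,t}$ to relate $\langle P_\lambda, P_\lambda\rangle_{q,t}$ to $\langle P_\mu, P_\mu\rangle_{q,t}$ whenever $\mu$ is obtained from $\lambda$ by removing one cell. Iterating this recursion and telescoping the arm/leg contributions yields
\[
\langle P_\lambda, P_\lambda\rangle_{q,t}=\prod_{\Box\in\lambda}\frac{1-q^{a_\lambda(\Box)+1}t^{\ell_\lambda(\Box)}}{1-q^{a_\lambda(\Box)}t^{\ell_\lambda(\Box)+1}},
\]
and combining this with $\J_\lambda=c_\lambda P_\lambda$ produces exactly the formula \eqref{eq j norm}. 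The telescoping bookkeeping for the arm/leg statistics is the only delicate step; all other ingredients are formal consequences of self-adjointness and triangularity.
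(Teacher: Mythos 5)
The paper does not prove this theorem; it is recalled from \cite[Chapter~VI]{Macdonald1995} (and \cite{GarsiaTeslerAdvances96}), so there is no in-paper proof to compare against. Your proposal reconstructs Macdonald's classical argument, and the overall architecture is correct: triangularity of $D_N$ plus pairwise distinct eigenvalues gives existence and uniqueness of the monic eigenvectors $P_\lambda$, self-adjointness of $D_N$ gives orthogonality, the Pieri recursion computes $\langle P_\lambda,P_\lambda\rangle_{q,t}$, and rescaling by $c_\lambda=\prod_{\Box\in\lambda}(1-q^{a_\lambda(\Box)}t^{\ell_\lambda(\Box)+1})$ produces the integral form and the stated norm $\normJ_\lambda$.

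One step is described imprecisely and would need care in a full write-up. ``Pushing the shift operators past the Weyl denominator'' is the argument that establishes self-adjointness of $D_N$ for the \emph{torus} (constant-term) inner product $\langle-,-\rangle_N'$, not for the Hall-type pairing $\langle-,-\rangle_{q,t}$ that the theorem is about. These are different bilinear forms. Macdonald's proof of self-adjointness with respect to $\langle-,-\rangle_{q,t}$ (restricted to $N$ variables) goes through the reproducing kernel $\Pi_N(x,y;q,t)$: one shows $D_N^{x}\Pi_N = D_N^{y}\Pi_N$ and deduces self-adjointness formally. Alternatively one proves self-adjointness for $\langle-,-\rangle_N'$ first and then relates the two inner products, which is also nontrivial. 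Either way, this step is not a one-line push-through and should be isolated as a lemma with its own proof.

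A smaller point worth flagging: as the paper states the theorem, the leading coefficient $v^\lambda_\lambda$ is not specified, so uniqueness over $\mathbb{Q}(q,t)$ only holds up to sign. You correctly invoke the normalization of the leading coefficient in your uniqueness argument; the standard characterization either fixes $v^\lambda_\lambda=c_\lambda$ explicitly or works with the monic $P_\lambda$ and defines $\J_\lambda:=c_\lambda P_\lambda$ afterward.
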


If $Y:=y_1+y_2+\dots$ is a second alphabet of variables, then \textit{Cauchy identity} for Macdonald polynomials reads (cf.\ \cref{eq:qtzlambda_def})
\begin{equation}\label{eq Cauchy}
    \sum_{\lambda\vdash m}\frac{\J_\lambda[X]\J_\lambda[Y]}{\normJ_\lambda}=\sum_{\pi\vdash m}\frac{p_\pi[X]p_\pi[Y]}{z_\pi(q,t)}, \text{ for any $m\geq 0$.}
\end{equation}

\noindent Moreover, we have the following plethystic substitution formula for Macdonald polynomials.
\begin{thm}\label{thm plethystic subt}
    $$\J_\lambda\left[\frac{1-v}{1-t}\right]=\prod_{\Box\in\lambda}\left(t^{\ell_\lambda'(\Box)}-vq^{a'_\lambda(\Box)}\right)$$
\end{thm}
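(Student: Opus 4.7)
The plan is to verify the identity by polynomial interpolation: check it at the infinite family of specializations $v = t^n$ for $n \geq 0$, then extend by polynomiality in $v$.

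For the specialization step, at $v = t^n$ the plethystic alphabet $\tfrac{1-v}{1-t}$ becomes the ordinary finite alphabet $1+t+t^2+\cdots+t^{n-1}$, so the left-hand side reduces to the evaluation $\J_\lambda(1,t,\ldots,t^{n-1};q,t)$. The classical principal specialization formula for Macdonald polynomials (see \cite[Ch.~VI]{Macdonald1995}) states, for the monic version $P_\lambda$,
\[
P_\lambda(1,t,\ldots,t^{n-1};q,t) \;=\; t^{n(\lambda)}\prod_{\Box\in\lambda}\frac{1-q^{a'_\lambda(\Box)}t^{n-\ell'_\lambda(\Box)}}{1-q^{a_\lambda(\Box)}t^{\ell_\lambda(\Box)+1}}.
\]
Since $\J_\lambda = \prod_{\Box\in\lambda}\bigl(1-q^{a_\lambda(\Box)}t^{\ell_\lambda(\Box)+1}\bigr)\,P_\lambda$ by the standard normalization relating the two bases, multiplying by this factor cancels the denominator; distributing the prefactor $t^{n(\lambda)} = \prod_\Box t^{\ell'_\lambda(\Box)}$ across the remaining product then yields
\[
\J_\lambda\!\left[\tfrac{1-t^n}{1-t}\right] \;=\; \prod_{\Box\in\lambda}\bigl(t^{\ell'_\lambda(\Box)}-t^n q^{a'_\lambda(\Box)}\bigr),
\]
which is precisely the claimed formula at $v = t^n$.

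For the interpolation step, I would argue as follows. The right-hand side of the theorem is visibly a polynomial in $v$ of degree $|\lambda|$ with coefficients in $\mathbb{Z}[q,t]$. The left-hand side, obtained by substituting $p_k \mapsto \tfrac{1-v^k}{1-t^k}$ into the power-sum expansion of $\J_\lambda$, is a priori only a rational function of $v$ over $\mathbb{Q}(q,t)$. Since the two expressions agree at infinitely many pairwise distinct specializations $v = t^n$ (distinct in $\mathbb{Q}(q,t)$ because $t$ is transcendental over $\mathbb{Q}$), they must coincide as rational functions, and hence as polynomials in $v$.

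The only nontrivial ingredient here is the principal specialization formula itself, whose proof in Macdonald's book proceeds by induction using the Pieri rule together with the orthogonality of the $P_\lambda$ under $\langle-,-\rangle_{q,t}$. Once it is in hand, the polynomial interpolation that closes the argument is routine; there is no genuine obstacle beyond bookkeeping with the arm/leg statistics.
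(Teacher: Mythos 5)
The paper states this result without proof, referring the reader to Macdonald's book; your derivation is correct and is essentially the standard one implicit in that reference, reducing the plethystic identity to the finite principal specialization $P_\lambda(1,t,\ldots,t^{n-1};q,t)$ and then extending by interpolation. One small remark: after substituting $p_k \mapsto \frac{1-v^k}{1-t^k}$ into the power-sum expansion of $\J_\lambda$, the left-hand side is already visibly a polynomial in $v$ of degree at most $|\lambda|$ (not merely a rational function), so agreement at $|\lambda|+1$ of the points $v=t^n$ would suffice; your argument via infinitely many agreement points is of course also valid, but the degree bound makes the interpolation even more transparent.
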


Let $\lambda$ be a partition. We write $\lambda\subseteq\xi$ if the diagram $\lambda$ is contained in the diagram of $\xi$. Moreover, we say that $\xi/\lambda$ is a \emph{horizontal strip}  if $\lambda\subseteq \xi$ and in each column there is at most one cell in $\xi$ and not in $\lambda$. In other terms, for every $i\geq 1$
$$\lambda'_i\leq \xi'_i\leq \lambda'_i+1.$$

\begin{thm}[Pieri rule]\label{thm Pieri rule}
Let $\lambda$ be a partition and $k\geq 1$. Then, 
$$h_k\left[X\frac{1-t}{1-q}\right]\cdot \J_\lambda=\sum_{\xi}\eta_{\lambda,\xi} \J_\xi,$$
for some coefficients $\eta_{\lambda,\xi}$,  where the sum is taken over partitions $\xi$ such that $\xi/ \lambda$ is a horizontal strip of size $k$.
    More generally, if $f$ is a symmetric function of degree $k$ then 
    $$f\cdot \J_\lambda=\sum_{\lambda\subset_k \xi}d^f_{\lambda,\xi} \J_\xi,$$
    for some coefficients $d^f_{\lambda,\xi}$, where the sum is taken over the partitions $\xi$ obtained by adding $k$ cells to $\lambda$. 
\end{thm}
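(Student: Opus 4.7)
The plan has two steps: first reduce the general statement to the Pieri rule for $g_k := h_k\!\left[X\tfrac{1-t}{1-q}\right]$, and then establish that specific rule via orthogonality.

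For the reduction, note that the plethystic substitution $X\mapsto X\tfrac{1-t}{1-q}$ is an algebra automorphism of $\Lambda$, so the $\{g_k\}_{k\geq 1}$ are algebraically independent generators of $\Lambda$, just as the $\{h_k\}$ are. Hence any degree-$k$ symmetric function $f$ is a polynomial in $g_1, g_2, \dots$ homogeneous of degree $k$, i.e., a linear combination of products $g_{\mu_1}\cdots g_{\mu_\ell}$ with $\mu\vdash k$. Iterating the first Pieri rule on $\J_\lambda$, each factor $g_{\mu_i}$ produces a horizontal strip of size $\mu_i$, so after all $\ell$ multiplications exactly $|\mu|=k$ cells are added to $\lambda$. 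The horizontal-strip structure is lost under iteration, but the cell count is preserved, giving the second statement.

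For the Pieri rule for $g_k$ itself, since $\{\J_\xi\}$ is an orthogonal basis under $\langle -,-\rangle_{q,t}$ with norms $\normJ_\xi$, the coefficients satisfy
$$\eta_{\lambda,\xi}\,\normJ_\xi = \langle g_k\,\J_\lambda,\,\J_\xi\rangle_{q,t}.$$
Let $A_k$ denote the adjoint of multiplication by $g_k$ with respect to $\langle -,-\rangle_{q,t}$. A direct computation on the power-sum basis, using \eqref{eq:qtzlambda_def} and the fact that the adjoint of multiplication by $p_k$ under $\langle -,-\rangle_{q,t}$ equals $\tfrac{1-q^k}{1-t^k}\,p_k^\perp$, identifies $A_k$ with the Hall-adjoint $h_k^\perp$. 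Consequently $\eta_{\lambda,\xi}\,\normJ_\xi = \langle \J_\lambda,\,h_k^\perp\,\J_\xi\rangle_{q,t}$, so $\eta_{\lambda,\xi}$ is, up to a ratio of $j$-norms, the coefficient of $\J_\lambda$ in the expansion of $h_k^\perp\,\J_\xi$ in the $\J$-basis. The claim thus reduces to the classical fact that $h_k^\perp\,\J_\xi$ is supported on partitions $\lambda$ with $\xi/\lambda$ a horizontal strip of size $k$, which is Macdonald's Pieri rule for the monic basis $P_\lambda$ transported to $\J_\lambda$ via the scalar relating them.

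The main obstacle is this horizontal-strip support, which is the combinatorial heart of the Pieri rule. Rather than reprove it, in a preliminaries section I would simply invoke Macdonald's argument (cf.\ \cite[VI.6.24]{Macdonald1995}), which proceeds via the triangularity of $P_\lambda$ in the monomial basis combined with a careful comparison of two expansions of the Cauchy kernel. The second (weaker) statement, by contrast, follows essentially formally from the first together with the algebraic generation of $\Lambda$ by the $g_k$.
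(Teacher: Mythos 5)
The paper states this theorem as a background fact in the preliminaries (\cref{sec preliminaries}) and cites \cite[Chapter VI]{Macdonald1995} and \cite{GarsiaTeslerAdvances96} in lieu of a proof, so there is no paper proof to compare against. Assessed on its own, your argument is correct. The reduction of the general statement to the $g_k:=h_k\left[X\tfrac{1-t}{1-q}\right]$ case is sound: the plethysm $X\mapsto X\tfrac{1-t}{1-q}$ is a degree-preserving algebra automorphism, so $\{g_k\}$ generate $\Lambda$ freely, and iterating the $g_k$ Pieri rule on any monomial $g_{\mu_1}\cdots g_{\mu_\ell}$ with $\mu\vdash k$ keeps the support inside partitions $\xi\supset\lambda$ with $|\xi|=|\lambda|+k$. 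Your adjoint computation is also correct: using \eqref{eq:qtzlambda_def}, the $\langle-,-\rangle_{q,t}$-adjoint of multiplication by $p_k$ is $\tfrac{1-q^k}{1-t^k}p_k^\perp$, and combining this over the power-sum expansion of $g_k$ shows the $\langle-,-\rangle_{q,t}$-adjoint of multiplication by $g_k$ is exactly $h_k^\perp$.

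The one thing worth flagging is that the dualization step is circular rather than a genuine reduction. The ``classical fact'' you defer to — that $h_k^\perp\,\J_\xi$ is supported on $\lambda$ with $\xi/\lambda$ a horizontal strip of size $k$ — is, by that very adjoint computation, \emph{equivalent} to the $g_k$ Pieri rule you set out to prove. You are not reducing to something more elementary; you are restating the Pieri rule in adjoint form and then invoking Macdonald for it, when one could invoke Macdonald's~(VI.6.24) for the $g_k$ Pieri rule directly, transported from $P_\lambda$ to $\J_\lambda$ by the defining scalar. Since you acknowledge at the end that the combinatorial core is being cited rather than reproved, this is an expository inefficiency rather than an error, and in the context of the paper (where this is a quoted preliminary) the outcome matches what the authors do.
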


\section{A new creation formula for Macdonald polynomials}\label{sec creation formula}
In this section we prove two creation formulas for modified Macdonald polynomials (Theorems \ref{thm creation modified} and \ref{thm creation modified 2}) from which we deduce the creation formulas for the integral forms stated in the introduction.
\subsection{Modified Macdonald polynomials}\label{ssec modified Macdonald}
In \cite{GarsiaHaiman1993}, Garsia and Haiman introduced a \emph{modified} version of \emph{Macdonald polynomials} 
$$\tH_\lambda=t^{n(\lambda)}J_\lambda^{(q,1/t)}\left[\frac{X}{1-1/t}\right].$$
The operators $\nabla$ and $\Delta_v$  are respectively defined by
\begin{equation}
  \nabla \tH_{\lambda}:=(-1)^{|\lambda|}\prod_{\Box\in\lambda}q^{a_\lambda'(\Box)}t^{\ell_\lambda'(\Box)}\tH_\lambda,  \label{eq def nabla}
\end{equation}
\begin{equation}
\qquad \Delta_{v} \tH_{\lambda}:=\prod_{\Box\in\lambda}\left(1-vq^{a_\lambda'(\Box)}t^{\ell_\lambda'(\Box)}\right)\tH_\lambda.\label{eq def delta}    
\end{equation}
These operators are related by the \textit{five-term relation} of Garsia and Mellit \cite{GarsiaMellit2019}
\begin{equation}\label{eq:fiveterm}
\nabla \Pop_{\frac{u}{M}}\nabla^{-1}\Pop_{\frac{uv}{M}}=\Delta_{1/v}\Pop_{\frac{uv}{M}}\Delta_{1/v}^{-1}\ ,
\end{equation}
where $M:=(1-q)(1-t)$. Let 
$$B_\lambda:=\sum_{\Box \in\lambda}q^{a'_\lambda(\Box)}t^{\ell'_\lambda(\Box)}=\sum_{1\leq i\leq \ell(\lambda)}t^{i-1}\frac{1-q^{\lambda_i}}{1-q},$$ and $D_\lambda:=M B_\lambda-1.$ We state another fundamental identity for Macdonald polynomials, due to Garsia, Haiman and Tesler \cite{GarsiaHaimanTesler2001}: for any partition $\lambda$
\begin{equation} \label{thm Tesler}
\nabla \Pop_{-\frac{u}{M}}\Top_{\frac{1}{u}} \widetilde{H}_\lambda[uX]=\Exp\left[-\frac{uXD_\lambda}{M} \right].
\end{equation}

\begin{rmq}
    Actually, a first connection between this identity and shifted Macdonald polynomials has been made in \cite[Theorem 3.2]{GarsiaHaimanTesler2001}. In the following, we prove that this identity can be "decomposed" using the operator $\Gamma$. This reformulation is a key step to obtain the construction formula \cref{thm creationJqt}.
\end{rmq}

\subsection{Creation formula for modified Macdonald polynomials}
We start by proving a modified version of \cref{thm creationJqt 2}.
Set \[\Gamma(u,v):=\Delta_{1/v}\Pop_{\frac{uv}{1-q}}\Delta_{1/v}^{-1}.\]

\begin{rmq}
Consider the operator $\tilde{\Theta}(z;v):=\Delta_v \Pop_{-\frac{z}{M}} \Delta_v^{-1}$ introduced in \cite{DAdderioIraciVWyngaerd21}. Then this operator is related to $\Gamma$ by
\[\Gamma(u,v)=\tilde{\Theta}(uv;1/v)^{-1}\tilde{\Theta}(tuv;1/v).\]
\end{rmq}

Before proving the main theorem of this subsection, we start by establishing a second expression for the operator $\Gamma$.   
\begin{lem}\label{lem Gamma}
We have
$$\Gamma(u,v)=\nabla \Pop_{\frac{u}{M}}\nabla^{-1}\Pop_{\frac{uv}{1-q}}\nabla \Pop_{\frac{-tu}{M}}\nabla^{-1}.$$
\begin{proof}
    The operator $\Gamma$ can be rewritten as follows 
$$\Gamma(u,v)=\left(\Delta_{1/v}\Pop_{\frac{uv}{M}}\Delta_{1/v}^{-1}\right)\left(\Delta_{1/v}\Pop_{\frac{-tuv}{M}}\Delta_{1/v}^{-1}\right)=\left(\Delta_{1/v}\Pop_{\frac{uv}{M}}\Delta_{1/v}^{-1}\right)\left(\Delta_{1/v}\Pop_{\frac{tuv}{M}}\Delta_{1/v}^{-1}\right)^{-1}.$$
Using the five-term relation \eqref{eq:fiveterm} on each one of the two factors, we obtain 
\begin{equation*}
  \Gamma(u,v)=\nabla \Pop_{\frac{u}{M}}\nabla^{-1}\Pop_{\frac{uv}{M}}\Pop_{\frac{-utv}{M}}\nabla \Pop_{\frac{-tu}{M}}\nabla^{-1}=\nabla \Pop_{\frac{u}{M}}\nabla^{-1}\Pop_{\frac{uv}{1-q}}\nabla \Pop_{\frac{-tu}{M}}\nabla^{-1}.\qedhere  
\end{equation*}
\end{proof}
\end{lem}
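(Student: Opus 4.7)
The plan is to recognize that the target expression on the right-hand side should be obtained by applying the five-term relation \eqref{eq:fiveterm} twice. The key algebraic step is to notice the identity
\[
\frac{uv}{1-q}=\frac{uv(1-t)}{(1-q)(1-t)}=\frac{uv}{M}-\frac{tuv}{M},
\]
which, combined with the group property $\Pop_{Z+Z'}=\Pop_Z\Pop_{Z'}$, gives the splitting $\Pop_{\frac{uv}{1-q}}=\Pop_{\frac{uv}{M}}\Pop_{-\frac{tuv}{M}}$. Inserting $\Delta_{1/v}^{-1}\Delta_{1/v}$ between the two factors lets us rewrite
\[
\Gamma(u,v)=\left(\Delta_{1/v}\Pop_{\frac{uv}{M}}\Delta_{1/v}^{-1}\right)\left(\Delta_{1/v}\Pop_{-\frac{tuv}{M}}\Delta_{1/v}^{-1}\right).
\]

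Next, I would apply the five-term relation to each factor. The first factor is exactly of the form appearing on the right-hand side of \eqref{eq:fiveterm}, so it equals $\nabla\Pop_{\frac{u}{M}}\nabla^{-1}\Pop_{\frac{uv}{M}}$ directly. For the second factor, I would use the rewriting $\Delta_{1/v}\Pop_{-\frac{tuv}{M}}\Delta_{1/v}^{-1}=\left(\Delta_{1/v}\Pop_{\frac{tuv}{M}}\Delta_{1/v}^{-1}\right)^{-1}$, then apply \eqref{eq:fiveterm} with $u$ replaced by $tu$, and finally invert, obtaining $\Pop_{-\frac{tuv}{M}}\nabla\Pop_{-\frac{tu}{M}}\nabla^{-1}$.

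The last step is to multiply the two resulting expressions and use $\Pop_{\frac{uv}{M}}\Pop_{-\frac{tuv}{M}}=\Pop_{\frac{uv(1-t)}{M}}=\Pop_{\frac{uv}{1-q}}$ to recombine the two middle plethystic exponentials back into a single $\Pop_{\frac{uv}{1-q}}$, yielding the asserted formula. The only real obstacle is bookkeeping: one must keep track of the order of the operators (which do not commute), of the sign in the second exponent, and of the fact that the inversion of the second factor reverses the order of the $\nabla$-conjugation and the plethystic exponential; once these are handled correctly the identity follows by direct rearrangement.
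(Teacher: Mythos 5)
Your proof is correct and is essentially identical to the paper's: both split $\Pop_{\frac{uv}{1-q}}$ inside the definition of $\Gamma$ into $\Pop_{\frac{uv}{M}}\Pop_{-\frac{tuv}{M}}$, insert $\Delta_{1/v}^{-1}\Delta_{1/v}$, rewrite the second conjugated factor as an inverse, apply the five-term relation (with $u\to tu$ in the second instance), and then recombine the two adjacent plethystic exponentials. You have merely spelled out a couple of the bookkeeping steps that the paper leaves implicit.
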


\noindent We now prove the first creation formula for modified Macdonald polynomials.

\begin{thm}\label{thm creation modified 2}
	For any partition $\lambda=[\lambda_1,\lambda_2,\dots,\lambda_{k}]$, we have
	\begin{align}\label{eq creation modified 2}
\Gamma(u,q^{\lambda_1})\Gamma(tu,q^{\lambda_2})\cdots \Gamma(t^{k-1}u,q^{\lambda_{k}})\cdot 1 & =\nabla \Top_{\frac{1}{u}} \widetilde{H}_\lambda[uX]=\nabla \widetilde{H}_\lambda[uX+1].
	\end{align}
\end{thm}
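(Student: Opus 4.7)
The approach is to use \cref{lem Gamma} to re-express each factor $\Gamma(t^{i-1}u,q^{\lambda_i})$ as a product of $\nabla^{\pm 1}$ and $\Pop$-operators, observe that the resulting product telescopes cleanly, and then reduce the remainder to one application of the Garsia--Haiman--Tesler identity \eqref{thm Tesler}.

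Explicitly, \cref{lem Gamma} (after the substitutions $u \mapsto t^{i-1}u$, $v \mapsto q^{\lambda_i}$) gives
\[
\Gamma(t^{i-1}u,q^{\lambda_i}) \;=\; \nabla\, A_i\, \nabla^{-1}\, B_i\, \nabla\, C_i\, \nabla^{-1},
\]
where $A_i := \Pop_{t^{i-1}u/M}$, $B_i := \Pop_{t^{i-1}uq^{\lambda_i}/(1-q)}$ and $C_i := \Pop_{-t^iu/M}$. Between two consecutive factors one has the cancellations
\[
\nabla^{-1}\cdot\nabla = \mathrm{Id}, \qquad C_i\, A_{i+1} = \Pop_{-t^iu/M}\,\Pop_{t^iu/M} = \Pop_0 = \mathrm{Id},
\]
the second because $\Pop$-operators commute. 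Iterating over $i=1,\dots,k$, the product collapses to
\[
\prod_{i=1}^{k}\Gamma(t^{i-1}u,q^{\lambda_i}) \;=\; \nabla\,\Pop_{u/M}\,\nabla^{-1}\,\Pop_{Z}\,\nabla\,\Pop_{-t^k u/M}\,\nabla^{-1}, \qquad Z := \frac{u}{1-q}\sum_{i=1}^{k}t^{i-1}q^{\lambda_i}.
\]

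The second step is to apply this operator to $1$. Since $\nabla\cdot 1 = 1$ (so also $\nabla^{-1}\cdot 1 = 1$) and since \eqref{thm Tesler} specialized to $\lambda=\emptyset$ (where $D_\emptyset = -1$) gives the basic identity $\nabla \cdot \Exp[-uX/M] = \Exp[uX/M]$, the rightmost chunk $\nabla\Pop_{-t^ku/M}\nabla^{-1}\cdot 1$ equals $\Exp[t^k u X/M]$. Then $\Pop_{Z}$ multiplies by $\Exp[ZX]$, producing $\Exp[(Z + t^k u/M)X]$. A short computation using $B_\lambda = \sum_{i=1}^{k} t^{i-1}(1-q^{\lambda_i})/(1-q)$ together with $D_\lambda = M B_\lambda - 1$ yields the key identity
\[
Z + \tfrac{t^k u}{M} \;=\; \tfrac{u(1-t^k)}{M} - uB_\lambda + \tfrac{t^k u}{M} \;=\; -\tfrac{u D_\lambda}{M}.
\]
Hence the operator applied to $1$ equals $\nabla\,\Pop_{u/M}\,\nabla^{-1}\cdot \Exp[-uXD_\lambda/M]$.

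Finally, rewriting \eqref{thm Tesler} as $\Top_{1/u}\widetilde{H}_\lambda[uX] = \Pop_{u/M}\,\nabla^{-1}\,\Exp[-uXD_\lambda/M]$ identifies the preceding expression with $\nabla\Top_{1/u}\widetilde{H}_\lambda[uX]$, which is the first equality of \eqref{eq creation modified 2}. The second equality follows immediately from $\Top_Z\cdot f[X] = f[X+Z]$ applied to $f[X] = \widetilde{H}_\lambda[uX]$. The only real obstacle is bookkeeping: making sure the $\nabla^{\pm 1}$ and $\Pop$-cancellations happen in the right order, and verifying the numerical identity $Z + t^k u/M = -uD_\lambda/M$; beyond this, no further structural ingredient is needed.
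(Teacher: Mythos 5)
Your proof is correct and follows essentially the same route as the paper's: apply \cref{lem Gamma} to each factor, exploit the telescoping of the $\nabla^{\pm 1}\Pop$ blocks, identify the exponent with $-uD_\lambda/M$, and finish with \eqref{thm Tesler}. The only cosmetic differences are that you specialize $v_i=q^{\lambda_i}$ from the start (the paper keeps the $v_i$ general until the last step), you make the $C_iA_{i+1}=\mathrm{Id}$ cancellation explicit, and you derive $\nabla\Pop_{-z/M}\cdot 1=\Pop_{z/M}\cdot 1$ from \eqref{thm Tesler} with $\lambda=\emptyset$ rather than citing it.
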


\begin{proof}
It follows, using \cref{lem Gamma}, that 
\begin{align*}
\Gamma(u,v_1)\Gamma(tu,v_2)\cdots \Gamma(t^{k-1}u,v_k) \cdot 1 
& =\nabla \Pop_{\frac{u}{M}}\nabla^{-1}\Pop_{\frac{uv_1}{1-q}}\Pop_{\frac{utv_2}{1-q}}\cdots\Pop_{\frac{ut^{k-1}v_k}{1-q}}\nabla \Pop_{\frac{-ut^{k}}{M}}\nabla^{-1}\cdot 1\ \ .
\end{align*}

\noindent Using $\nabla^{-1}\cdot 1=1$ and $\nabla \Pop_{-\frac{z}{M}}\cdot 1=\Pop_{\frac{z}{M}}\cdot 1$ (see e.g. \cite[Eq. (1.47)]{DAdderioRomero2023} with $k=n$), we get
\begin{align}
\Gamma(u,v_1)\Gamma(tu,v_2)\cdots\Gamma(t^{k-1}u,v_k)  \cdot 1 
&=\! \nabla \Pop_{\frac{u}{M}}\nabla^{-1}\Pop_{\frac{uv_1}{1-q}}\Pop_{\frac{utv_2}{1-q}}\cdots\Pop_{\frac{ut^{k-1}v_k}{1-q}}\Pop_{\frac{ut^{k}}{M}}\cdot 1\nonumber\\
&=\!\nabla \Pop_{\frac{u}{M}}\! \nabla^{-1}\Exp\!\! \left[\frac{ut^{k}X}{M}+\frac{uX}{1-q}\sum_{1\leq i\leq k}t^{i-1}v_i\right]\label{eq Gamma}\\
& =\! \nabla \Pop_{\frac{u}{M}}\! \nabla^{-1}\Exp\!\! \left[\frac{uX}{M}-\frac{uX}{M}(1-t)\sum_{1\leq i\leq k}t^{i-1}(1-v_i)\right]\!.\nonumber
\end{align}
\noindent    Fix now a partition $\lambda$.	Applying the previous equation, we get
\begin{align*}
\Gamma(u,q^{\lambda_1})\Gamma(tu,q^{\lambda_2})\cdots \Gamma(t^{\ell-1}u,q^{\lambda_{\ell}})\cdot 1 & =\! \nabla \Pop_{\frac{u}{M}}\! \nabla^{-1}\Exp\!\! \left[\frac{uX}{M}-\frac{uX}{M}(1-t)\sum_{i\geq 1}t^{i-1}(1-q^{\lambda_i})\right]\\
	& =\! \nabla \Pop_{\frac{u}{M}}\!\nabla^{-1}\Exp\!\! \left[-\frac{uXD_\lambda}{M} \right].
\end{align*}

\noindent Applying \cref{thm Tesler} concludes the proof of the theorem.
\end{proof}

\subsection{Vanishing property and second creation formula}
The purpose of this subsection is to prove a version of \cref{thm creationJqt} for modified Macdonald polynomials.
\begin{thm} \label{thm creation modified}
	For any partition $\lambda=[\lambda_1,\lambda_2,\dots,\lambda_k]$, we have
	\begin{equation}\label{eq thm creation modified}
		\Gammaplus{\lambda_1}\Gammaplus{\lambda_2}\cdots\Gammaplus{\lambda_k}\cdot1=t^{-n(\lambda)}\tH_{\lambda}
  \end{equation}
  \noindent where
  $$\Gammaplus{m}:=[u^m]\nabla^{-1}\Gamma(u,q^m)\nabla.$$
\end{thm}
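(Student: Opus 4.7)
The plan is to induct on the length $k$ of $\lambda$, using Theorem~\ref{thm creation modified 2} as the main engine together with a Pieri-type vanishing property. Unfolding the composition via $\nabla\cdot 1=1$ and the telescoping cancellation $\nabla\nabla^{-1}=I$ gives
\[
\Gammaplus{\lambda_1}\cdots\Gammaplus{\lambda_k}\cdot 1 \;=\; [u_1^{\lambda_1}]\cdots [u_k^{\lambda_k}]\,\nabla^{-1}\,\Gamma(u_1,q^{\lambda_1})\cdots \Gamma(u_k,q^{\lambda_k})\cdot 1,
\]
so setting $\mu=(\lambda_2,\dots,\lambda_k)$, the inductive step reduces to proving $\Gammaplus{\lambda_1}\tH_\mu=\tH_\lambda$ whenever $\lambda_1\ge\mu_1$, the base case $k=0$ being trivial.

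The key input is a Pieri expansion for $\Gamma(u,v)$. Starting from $\Gamma(u,v)=\Delta_{1/v}\Pop_{uv/(1-q)}\Delta_{1/v}^{-1}$ and transporting the classical Pieri rule (Theorem~\ref{thm Pieri rule}) to the $\tH$-basis through the relation $\tH_\mu=t^{n(\mu)}J^{(q,1/t)}_\mu[X/(1-1/t)]$---exploiting that the $\Delta_{1/v}$-eigenvalue factors over cells of $\mu$ cancel between $\Delta_{1/v}$ and $\Delta_{1/v}^{-1}$, since $(a'(\Box),\ell'(\Box))$ is intrinsic to each cell---one obtains
\[
\Gamma(u,v)\tH_\mu \;=\; \sum_{\xi:\ \xi/\mu\text{ h.s.}} u^{|\xi|-|\mu|}\,\tilde c_{\mu,\xi}(q,t)\!\prod_{\Box\in\xi\setminus\mu}\!\bigl(v-q^{a'(\Box)}t^{\ell'(\Box)}\bigr)\tH_\xi.
\]
The central observation is the \emph{vanishing property}: at $v=q^{\lambda_1}$, the factor $q^{\lambda_1}-q^{a'(\Box)}t^{\ell'(\Box)}$ vanishes exactly when $\Box=(1,\lambda_1+1)$. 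Hence, whenever $\nu_1\le\lambda_1$, any horizontal strip $\xi/\nu$ with $\xi_1>\lambda_1$ contributes $0$; extracting $[u^{\lambda_1}]\Gamma(u,q^{\lambda_1})\tH_\mu$ together with the degree constraint $|\xi|-|\mu|=\lambda_1$ then forces the unique surviving shape $\xi=\lambda$.

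To pin down the scalar and close the induction, I would combine Theorem~\ref{thm creation modified 2} applied to both $\mu$ and $\lambda$ to write $\Gamma(u,q^{\lambda_1})\,\nabla\tH_\mu[tuX+1]=\nabla\tH_\lambda[uX+1]$. Expanding $\tH_\mu[tuX+1]=\sum_{j\le|\mu|}(tu)^jG_j$ with $G_{|\mu|}=\tH_\mu$ and extracting $[u^{|\lambda|}]$ yields
\[
\nabla\tH_\lambda \;=\; \sum_{j\le|\mu|} t^j\,\Gamma_{|\lambda|-j}(q^{\lambda_1})\,\nabla G_j, \qquad \Gamma_a(v):=[u^a]\Gamma(u,v).
\]
The vanishing property applied to each partition $\nu$ appearing in the $\tH$-support of $G_j$ (all of which satisfy $\nu_1\le\lambda_1$) annihilates every term with $j<|\mu|$, isolating a single identity between $\Gamma_{\lambda_1}(q^{\lambda_1})\nabla\tH_\mu$ and $\nabla\tH_\lambda$. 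Applying $\nabla^{-1}$ and matching against the definition of $\Gammaplus{\lambda_1}$ gives the desired $\Gammaplus{\lambda_1}\tH_\mu=\tH_\lambda$.

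The main obstacle is the uniform vanishing check: verifying that for every $\nu$ in the $\tH$-support of each $G_j$ with $j<|\mu|$, the horizontal-strip condition together with $|\xi|-|\nu|=|\lambda|-j$ genuinely forces $\xi_1>\lambda_1$ (so that the Pieri factor vanishes). This should follow from a pigeonhole argument using $|\nu|=j<|\mu|=|\lambda|-\lambda_1$ and $\nu_1\le\lambda_1$, but the bookkeeping---combined with checking that the Pieri coefficients $\tilde c_{\mu,\xi}$ recombine correctly with the exponents of $t$ coming from $(tu)^j$---is where the delicate work lies.
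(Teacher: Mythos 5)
Your overall strategy is the same as the paper's: take \cref{thm creation modified 2} as the engine, extract the top $u$-coefficient, and kill the off-diagonal contributions via the Pieri vanishing at $v=q^{\lambda_1}$ (\cref{eq Gamma formula}). The difference is in the organization. The paper extracts $[u^{m}]$ from the full-length product $\Gamma(u,q^{\lambda_1})\cdots\Gamma(t^{\ell-1}u,q^{\lambda_\ell})\cdot 1$ at once, yielding a sum over all compositions $(m_1,\dots,m_\ell)$ of $m$, and then uses \cref{prop stability and vhanishing condition} iteratively from right to left: since $1=\tH_\emptyset\in\Lambda_{\leq s}$ for every $s$, item (1) (stability of $\Lambda_{\leq s}$ under $[u^k]\Gamma(u,q^s)$) keeps each partial product inside $\Lambda_{\leq\lambda_i}$, and item (2) then annihilates any composition with some $m_i>\lambda_i$. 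You instead set up an induction on $\ell(\lambda)$, reducing to $\Gammaplus{\lambda_1}\tH_\mu=\tH_\lambda$, and pin this down by comparing \cref{thm creation modified 2} for $\lambda$ and $\mu$.

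The real gap — which to your credit you flag in your last paragraph but do not close — is the assertion that every $\nu$ in the $\tH$-support of $G_j=[z^j]\tH_\mu[zX+1]$ satisfies $\nu_1\leq\lambda_1$. This is exactly what you need so that, for $j<|\mu|$, the size-$(|\lambda|-j)$ horizontal strips $\xi/\nu$ all have $\xi_1>\lambda_1$ (the pigeonhole you describe is fine once you have $\nu_1\leq\lambda_1$; it is just \cref{prop stability and vhanishing condition}(2)). But $\nu_1\leq\lambda_1$ amounts to the nontrivial fact that $\tH_\mu[zX+1]$ expands only on $\tH_\nu$ with $\nu\subseteq\mu$ — essentially the translation/binomial structure of modified Macdonald polynomials. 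Nothing in your argument establishes this; it would itself have to be derived (e.g., from \cref{thm creation modified 2} applied to $\mu$ together with \cref{prop stability and vhanishing condition}(1), which is how the paper circumvents the issue entirely by never isolating $\tH_\mu$ as an intermediate object). A secondary remark: your second paragraph, showing via the Pieri expansion that the only possible surviving shape is $\xi=\lambda$, becomes redundant once the third paragraph is carried out, since that already yields the exact identity $\Gammaplus{\lambda_1}\tH_\mu=\tH_\lambda$ including the scalar. Finally, keep a careful eye on the $t$-powers: extracting $[u^{|\lambda|}]$ from $\Gamma(u,q^{\lambda_1})\nabla\tH_\mu[tuX+1]$ leaves a factor $t^{|\mu|}$ in front of the $j=|\mu|$ term, which you drop silently; this bookkeeping needs to be tracked consistently against the $t^{i-1}$ in the arguments of $\Gamma$ when the pieces are reassembled.
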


We start by stating the Pieri rule for modified Macdonald polynomials, which can be deduced from \cref{thm Pieri rule} (see \cite[Proposition~2.7]{GarsiaTeslerAdvances96}). 
\begin{thm}[Pieri rule for $\tH_\lambda$]\label{thm Pieri rule modified}
Let $\lambda$ be a partition and $k\geq 1$. Then, 
$$h_k\left[\frac{X}{1-q}\right]\cdot \tH_\lambda=\sum_{\xi}\widetilde \eta_{\lambda,\xi} \tH_\xi,$$
for some coefficients $\widetilde\eta_{\lambda,\xi}$,  where the sum is taken over partitions $\xi$ such that $\xi/ \lambda$ is a horizontal strip of size $k$.
\end{thm}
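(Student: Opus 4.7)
\medskip

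The plan is to deduce the Pieri rule for $\tH_\lambda$ by transporting the Pieri rule for $\J_\lambda$ (\cref{thm Pieri rule}) through the definition
\[
\tH_\lambda=t^{n(\lambda)}J^{(q,1/t)}_\lambda\!\left[\frac{X}{1-1/t}\right],
\]
which expresses $\tH_\lambda$ as a plethystic rescaling of the integral form specialized at $(q,1/t)$. The key observation is that for any symmetric function $E$ of positive degree, the map $\Phi_E:f[X]\mapsto f[X E]$ is an algebra morphism, so it commutes with multiplication of symmetric functions; this is what allows us to push the Pieri expansion through the substitution without picking up cross terms.

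\medskip

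First, I would apply \cref{thm Pieri rule} with the parameter pair $(q,1/t)$ in place of $(q,t)$: for a partition $\lambda$ and $k\geq1$,
\[
h_k\!\left[X\frac{1-1/t}{1-q}\right]\cdot J^{(q,1/t)}_\lambda[X]=\sum_{\xi}\eta^{(q,1/t)}_{\lambda,\xi}\,J^{(q,1/t)}_\xi[X],
\]
where the sum is over partitions $\xi$ with $\xi/\lambda$ a horizontal strip of size $k$; crucially, the combinatorial indexing set does not depend on the parameters. Next, I would apply the plethystic substitution $X\mapsto X/(1-1/t)$ to both sides. Since this substitution is an algebra morphism, it turns the left-hand side into
\[
h_k\!\left[\frac{X}{1-q}\right]\cdot J^{(q,1/t)}_\lambda\!\left[\frac{X}{1-1/t}\right],
\]
and on the right-hand side it just transforms each $J^{(q,1/t)}_\xi[X]$ into $J^{(q,1/t)}_\xi[X/(1-1/t)]$.

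\medskip

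Finally, multiplying by $t^{n(\lambda)}$ and using the definition of $\tH$, the left-hand side becomes $h_k[X/(1-q)]\cdot\tH_\lambda$, and each summand on the right becomes a scalar multiple of $\tH_\xi$, namely with coefficient
\[
\widetilde\eta_{\lambda,\xi}:=t^{n(\lambda)-n(\xi)}\,\eta^{(q,1/t)}_{\lambda,\xi}.
\]
Since the indexing set is unchanged, this yields exactly the claimed Pieri rule for $\tH_\lambda$. There is essentially no hard step here: the only care required is verifying that the rescaling $f[X]\mapsto f[X/(1-1/t)]$ is a well-defined algebra morphism on $\Lambda$ with coefficients extended to include $t$, so that it respects the product $h_k[\cdot]\cdot J^{(q,1/t)}_\lambda[X]$ on the left. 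This is standard in the plethystic formalism recalled in \cref{ssec plethysm}, so the argument reduces to the substitution and the bookkeeping of the $t^{n(\lambda)}$ prefactor.
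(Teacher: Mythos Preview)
Your proposal is correct and follows exactly the approach the paper indicates: the paper does not spell out a proof but simply states that the result ``can be deduced from \cref{thm Pieri rule}'' (citing \cite[Proposition~2.7]{GarsiaTeslerAdvances96}), and your argument---specializing the Pieri rule to parameters $(q,1/t)$, applying the algebra morphism $X\mapsto X/(1-1/t)$, and absorbing the $t^{n(\cdot)}$ prefactors---is precisely the natural way to fill in that deduction.
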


Combining \cref{thm Pieri rule modified} and the definition of $\Delta_v$ (see \cref{eq def delta}), we deduce the following formula;   
\begin{equation}\label{eq Gamma formula}
  \Gamma(u,v)\cdot \tH_\lambda=\sum_{k\geq 0}u^k\sum_{\xi\vdash k+|\lambda|}\widetilde\eta_{\lambda,\xi}\tH_\xi\prod_{\Box\in\xi/ \lambda}\left(v-q^{a'(\Box)}t^{\ell'(\Box)}\right),  
\end{equation}
where the second sum is taken over partitions $\xi$ such that $\xi/\lambda$ is a horizontal strip of size $k$. We now consider for each non-negative integer $s$, the subspace of $\Lambda$ defined by 
$$\Lambda_{\leq s}:=\text{Span}_{\mathbb Q(q,t)}\left\{\tH_\lambda, \text{ for partitions }\lambda \text{ such that }\lambda_1\leq s\right\}.$$

\begin{rmq}\label{rmq Box0}
    Let $\Box_0$ be the cell of coordinates $(1,s+1)$. This cell is characterized by $a'(\Box_0)=s$ and $\ell'(\Box_0)=0$. Notice that the condition $\lambda_1\leq s$ is equivalent to saying that $\Box_0$ is not a cell of the Young diagram of $\lambda$.
\end{rmq}

We then have the following proposition.
\begin{prop}\label{prop stability and vhanishing condition}
    Fix $s,k\geq 0$ two integers. 
    \begin{enumerate}[label={\normalfont(}\arabic*\normalfont)]
        \item The space $\Lambda_{\leq s}$ is stabilized by the action of the operator $[u^k]\Gamma(u,q^s)$.
        \item If $k>s$, then $[u^k]\Gamma(u,q^s)=0$ as operators on $\Lambda_{\leq s}$. 
    \end{enumerate} 
\begin{proof}
Fix a partition $\lambda$ such that $\lambda_1\leq s$.
From \cref{eq Gamma formula}, we know that 
\begin{equation}\label{eq prop stability}
  [u^k]\Gamma(u,q^s)\cdot \tH_\lambda=\sum_{\xi\vdash k+|\lambda|}\widetilde\eta_{\lambda,\xi}\tH_\xi\prod_{\Box\in\xi/ \lambda}\left(q^s-q^{a'(\Box)}t^{\ell'(\Box)}\right),  
\end{equation}
where the sum is taken over horizontal strips $\xi/\lambda$ of size $k$. Notice that, with the notation of \cref{rmq Box0}, the quantity $q^s-q^{a'(\Box)}t^{\ell'(\Box)}$ is 0 if and only if $\Box=\Box_0$. Moreover, if $\xi$ is a partition such that $\xi_1>s$, then $\Box_0\in\xi/\lambda$ and therefore, $\xi$ does not contribute to the sum in \cref{eq prop stability}. This proves $(1)$. If $k>s$ then any horizontal strip $\xi/\lambda$ of size $k>s$ contains necessarily the cell $\Box_0$. This gives (2).    
\end{proof}
\end{prop}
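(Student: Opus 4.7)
The plan is to reduce the proposition to the explicit expansion of $\Gamma(u,v)\cdot \tH_\lambda$ in the modified Macdonald basis, namely Equation~\eqref{eq Gamma formula}. Specializing $v=q^s$ and extracting the coefficient of $u^k$ gives
\[
[u^k]\Gamma(u,q^s)\cdot \tH_\lambda \;=\; \sum_{\xi} \widetilde\eta_{\lambda,\xi}\; \tH_\xi \prod_{\Box\in\xi/\lambda}\bigl(q^s-q^{a'(\Box)}t^{\ell'(\Box)}\bigr),
\]
where $\xi$ ranges over partitions such that $\xi/\lambda$ is a horizontal strip of size $k$. The whole argument then consists in identifying when this product of factors is nonzero.

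The key observation, essentially already recorded in \cref{rmq Box0}, is that the factor $q^s-q^{a'(\Box)}t^{\ell'(\Box)}$ vanishes in $\mathbb{Q}(q,t)$ exactly when $(a'(\Box),\ell'(\Box))=(s,0)$, i.e., when $\Box=\Box_0:=(1,s+1)$. For $\lambda_1\leq s$ the cell $\Box_0$ never belongs to $\lambda$, so the product above vanishes precisely when $\Box_0\in \xi/\lambda$, equivalently when $\xi_1\geq s+1$. This immediately yields part (1): the partitions $\xi$ that actually contribute all satisfy $\xi_1\leq s$, so $[u^k]\Gamma(u,q^s)\cdot \tH_\lambda$ is a linear combination of basis elements in $\Lambda_{\leq s}$.

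For part (2), I would add the combinatorial remark that a horizontal strip of size $k$ occupies $k$ pairwise distinct columns, all of which are $\leq \xi_1$, so automatically $\xi_1\geq k$. When $k>s$ this forces $\xi_1>s\geq \lambda_1$, hence $\Box_0\in \xi/\lambda$, and the corresponding coefficient vanishes. Thus no $\xi$ contributes to the sum and the operator is identically zero on every $\tH_\lambda\in \Lambda_{\leq s}$. There is really no technical obstacle once \eqref{eq Gamma formula} is available: all the work has been front-loaded in rewriting $\Gamma(u,v)$ via the Pieri rule and the eigen-action of $\Delta_{1/v}$, and the proposition then reduces to the vanishing of a single cell factor at $\Box_0$.
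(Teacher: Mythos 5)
Your proof is correct and follows essentially the same route as the paper: specialize $v=q^s$ in \eqref{eq Gamma formula}, extract $[u^k]$, and observe that the only factor that can vanish over $\mathbb{Q}(q,t)$ is the one attached to $\Box_0=(1,s+1)$. The only difference is that for part (2) you spell out the combinatorial justification (a horizontal strip of size $k$ occupies $k$ distinct columns, forcing $\xi_1\geq k>s$) that the paper leaves implicit; this is a harmless and slightly more explicit version of the same argument.
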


We are now ready to prove \cref{thm creation modified}.
\begin{proof}[Proof of \cref{thm creation modified}]
    Let $m$ and $\ell$ denote respectively the size and the length of $\lambda$. Extracting the coefficient of $u^m$ in  \cref{eq creation modified 2}, we get 
\begin{align}
	\sum_{m_1+\dots+m_\ell=m}\left([u^{m_1}]\Gamma(u,q^{\lambda_1})\right)\cdots \left([u^{m_\ell}]\Gamma(t^{\ell-1}u,q^{\lambda_{\ell}}))\right)\cdot 1 & =\nabla \tH_\lambda[X].
\end{align}
Hence,
\begin{align}\label{eq formula modified}	\sum_{m_1+\dots+m_\ell=m}\left([u^{m_1}]\nabla^{-1}\Gamma(u,q^{\lambda_1})\nabla\right)\cdots \left([u^{m_\ell}]\nabla^{-1}\Gamma(t^{\ell-1}u,q^{\lambda_{\ell}})\nabla\right)\cdot 1 & =\tH_\lambda[X].
\end{align}
We start by proving that the only tuple $(m_1,\dots,m_\ell)$ which contributes to this sum is $(\lambda_1,\dots,\lambda_\ell).$
First, notice that for each tuple $(m_1,\dots,m_{\ell})$, we obtain by induction and using item (1) of \cref{prop stability and vhanishing condition} that for any $0\leq i \leq \ell$ we have
\begin{equation}\label{eq stability induction}
  \left([u^{m_{\ell-i}}]\nabla^{-1}\Gamma(t^{\ell-i-1}u,q^{\lambda_{\ell-i}})\nabla\right)\cdots \left([u^{m_\ell}]\nabla^{-1}\Gamma(t^{\ell-1}u,q^{\lambda_{\ell}})\nabla\right)\cdot 1\in \Lambda_{\leq \lambda_{\ell-i}}.  
\end{equation}
Moreover, if for some $1\leq i\leq \ell$, we have $m_i>\lambda_i$, then from \cref{prop stability and vhanishing condition} item (2) and \cref{eq stability induction} we have  
\begin{equation}
  \left([u^{m_{i}}]\nabla^{-1}\Gamma(t^{i-1}u,q^{\lambda_{i}})\nabla\right)\cdots \left([u^{m_\ell}]\nabla^{-1}\Gamma(t^{\ell-1}u,q^{\lambda_{\ell}})\nabla\right)\cdot 1=0.  
\end{equation}
We deduce that any tuple $(m_1,\dots,m_\ell)$ which is different from $(\lambda_1,\dots,\lambda_\ell)$ does not contribute to the sum of \cref{eq formula modified}. 
Therefore
\begin{align*}
\left([u^{\lambda_1}]\nabla^{-1}\Gamma(u,q^{\lambda_1})\nabla\right)
\left([u^{\lambda_2}]\nabla^{-1}\Gamma(tu,q^{\lambda_2})\nabla\right)\cdots \left([u^{\lambda_\ell}]\nabla^{-1}\Gamma(t^{\ell-1}u,q^{\lambda_{\ell}})\nabla\right)\cdot 1 & =\tH_\lambda[X].
\end{align*}
This can be rewritten as follows,
\begin{multline*}
\left([u^{\lambda_1}]\nabla^{-1}\Gamma(u,q^{\lambda_1})\nabla\right)
\left(t^{\lambda_2}[u^{\lambda_2}]\nabla^{-1}\Gamma(u,q^{\lambda_2})\nabla\right)\cdots \\
\left(t^{\lambda_\ell(\ell-1)}[u^{\lambda_\ell}]\nabla^{-1}\Gamma(u,q^{\lambda_{\ell}})\nabla\right)\cdot 1 =\tH_\lambda[X].
\end{multline*}
Using the definition of $n(\lambda)$ (see \cref{eq n}), we deduce that
\begin{align*}
t^{n(\lambda)}\left([u^{\lambda_1}]\nabla^{-1}\Gamma(u,q^{\lambda_1})\nabla\right)\cdots \left([u^{\lambda_\ell}]\nabla^{-1}\Gamma(u,q^{\lambda_{\ell}})\nabla\right)\cdot 1 & =\tH_\lambda[X].
\end{align*}
This completes the proof of the theorem.
\end{proof}

\begin{rmq}
    \cref{thm creation modified} can be proved independently from \cref{thm creation modified 2} using the explicit expression of the Pieri coefficients $\widetilde\eta_{\lambda,\xi}$. We prefer here the proof based on \cref{thm creation modified 2} since it is less computational and it allows to shed some light on the properties of the operator~$\Gamma$.
\end{rmq}

\subsection{Proof of Theorems \ref{thm creationJqt 2} and \ref{thm creationJqt}} \label{sec Proof}
In this subsection we deduce Theorems \ref{thm creationJqt 2} and \ref{thm creationJqt} from Theorems \ref{thm creation modified 2} and \ref{thm creation modified} respectively.

Consider the transformation $\phi$ on $\Lambda$ defined by
$$f=\sum_{\mu}d^f_\mu(q,t) p_\mu[X] \longmapsto\phi(f):=\sum_{\mu}d^f_\mu(q,1/t) p_\mu\left[\frac{X}{1-1/t}\right],$$ 
where $d^f_\mu$ are the coefficients of $f$ in the power-sum basis. Notice that $\phi$ is invertible and $$\phi^{-1}(f)=\sum_{\mu}d^f_\mu(q,1/t) p_\mu\left[X(1-t)\right] \quad \text{for any $f$.}$$ 
With this definition, one has
$$\tH_\lambda=t^{n(\lambda)}\phi(\J_\lambda)=\phi\left(t^{-n(\lambda)}\J_\lambda\right).$$
If $\mathcal{O}$ is an operator on modified Macdonald polynomials, then we define its integral $\boldsymbol{\mathcal{O}}$ version by the composition
\begin{equation}\label{eq OJ}
  \boldsymbol{\mathcal{O}}:=\phi^{-1}\cdot \mathcal{O}\cdot\phi.  
\end{equation}

In particular, 
$$\nablaJ=\phi^{-1}\cdot \nabla \cdot\phi \quad \text{ and }\quad\DeltaJ_v=\phi^{-1}\cdot \Delta_v \cdot\phi.$$
\begin{lem}
For every $i\geq 0$, we have
\[\phi^{-1}\cdot \Pop_{\frac{ut^i}{1-q}}\cdot\phi=\Pop_{\frac{t^{-i}(1-t)u}{1-q}}.\]
\end{lem}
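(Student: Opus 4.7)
The plan is to reduce the operator identity to a scalar identity in $\Lambda[v]\llbracket u\rrbracket$. The key observation is that $\Pop_Z$ is nothing but multiplication by the symmetric function $\Exp[ZX]$, and that both $\phi$ and $\phi^{-1}$ are algebra morphisms on $\Lambda[v]\llbracket u\rrbracket$. Indeed, each of them is the composition of a coefficient substitution $t\mapsto 1/t$ (a ring morphism on the scalar field $\mathbb{Q}(q,t)$) with a plethystic substitution in the alphabet $X$, and both kinds of substitutions are algebra morphisms. Hence, for every $f\in\Lambda[v]\llbracket u\rrbracket$,
$$\phi^{-1}\!\left(\Pop_Z(\phi(f))\right)=\phi^{-1}\!\left(\Exp[ZX]\cdot\phi(f)\right)=\phi^{-1}(\Exp[ZX])\cdot f,$$
so the conjugated operator $\phi^{-1}\cdot \Pop_Z\cdot \phi$ is itself multiplication by $\phi^{-1}(\Exp[ZX])$. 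With $Z=ut^i/(1-q)$, it therefore suffices to prove the scalar identity
$$\phi^{-1}\!\left(\Exp\!\left[\frac{ut^iX}{1-q}\right]\right)=\Exp\!\left[\frac{t^{-i}(1-t)uX}{1-q}\right],$$
which will identify the operator as $\Pop_{t^{-i}(1-t)u/(1-q)}$, as claimed.

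To establish this scalar identity I would expand the left-hand side in the power-sum basis and then apply the two constituent actions of $\phi^{-1}$ in turn. Starting from
$$\Exp\!\left[\frac{ut^iX}{1-q}\right]=\sum_{\mu\in\mathbb{Y}}\frac{u^{|\mu|}\,t^{i|\mu|}}{z_\mu\prod_{k}(1-q^{\mu_k})}\,p_\mu[X],$$
the first step of $\phi^{-1}$, namely the coefficient substitution $t\mapsto 1/t$, fixes all the $q$-factors and turns $t^{i|\mu|}$ into $t^{-i|\mu|}$, yielding $\Exp\!\left[ut^{-i}X/(1-q)\right]$. The second step of $\phi^{-1}$, namely the plethystic substitution $X\mapsto X(1-t)$, multiplies the inner alphabet by $(1-t)$, producing $\Exp\!\left[ut^{-i}(1-t)X/(1-q)\right]$, which is exactly the right-hand side of the desired identity.

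No step presents a genuine obstacle: the lemma is essentially a careful bookkeeping exercise. The only subtlety is to keep in mind that $\phi^{-1}$ acts in two separate ways — by $t\mapsto 1/t$ on scalars and by $X\mapsto X(1-t)$ plethystically — and to track each action cleanly on the plethystic exponential, using in particular that the factor $1/(1-q)$ is unaffected by the $t$-inversion.
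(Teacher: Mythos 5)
Your proof is correct, and it takes a genuinely cleaner route than the paper's. The paper fixes an arbitrary $F[X]=\sum_\lambda c_\lambda(q,t)p_\lambda$, writes $\Exp\!\left[\tfrac{t^{-i}(1-t)u}{1-q}X\right]$ as a product of two simpler plethystic exponentials, and then pushes $\phi$ through the whole product of three factors term by term; the multiplicativity of $\phi$ is used implicitly in that step but is never isolated as the organizing principle. You instead make that multiplicativity the whole argument: since $\phi$ and $\phi^{-1}$ are each a composition of the $t\mapsto 1/t$ coefficient substitution (a ring map) with a plethystic substitution in $X$ (another ring map), conjugating a multiplication operator by $\phi^{-1}$ yields the multiplication operator by $\phi^{-1}$ of the multiplier, and the lemma collapses to the one-line scalar identity $\phi^{-1}\!\bigl(\Exp[ut^iX/(1-q)]\bigr)=\Exp[ut^{-i}(1-t)X/(1-q)]$, which you compute correctly by tracking the two constituent actions of $\phi^{-1}$ on the power-sum expansion. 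What the paper's direct computation buys is that it never has to pause to verify that $\phi$ is multiplicative (and semilinear in $t$), so it is self-contained at the level of formal manipulations; what your approach buys is conceptual transparency and reusability, since the same observation ``conjugation of $\Pop_Z$ by any plethysm-plus-coefficient-substitution is again a $\Pop$-operator'' immediately handles any identity of this type. One small caveat worth making explicit in your write-up: the two constituent steps of $\phi^{-1}$ do not commute, so the order (first $t\mapsto 1/t$ on coefficients written in the $p_\mu[X]$ basis, then $X\mapsto X(1-t)$) must be stated as you do; and $\phi$ is only $\mathbb{Q}(q)$-linear (semilinear over $\mathbb{Q}(q,t)$), but multiplicativity is all your argument needs, so this is harmless.
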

\begin{proof}
Fix a symmetric function $F[X]=\sum_{\lambda}c_\lambda(q,t)p_\lambda$, where $c_\lambda(q,t)$ are the coefficients of the expansion of $F[X]$ in the power-sum basis and set $\widetilde F[X]:=\sum_{\lambda}c_\lambda(q,1/t)p_\lambda$. Then
\begin{align*}
& \phi\cdot \Pop_{\frac{t^{-i}(1-t)u}{1-q}}\ F[X] =\phi \Exp\left[\frac{t^{-i}(1-t)u}{1-q}X\right]F[X]\\
& = \phi \Exp\left[\frac{t^{-i}u}{1-q}X\right] \Exp\left[-\frac{t^{-i+1}u}{1-q}X\right]F[X]\\
& = \phi \sum_{k\geq 0}u^kt^{-ik}h_k\left[\frac{X}{1-q}\right] \sum_{k\geq 0}u^kt^{-(i-1)k}h_k\left[-\frac{X}{1-q}\right]F[X]\\
& = \sum_{k\geq 0}u^kt^{ik}h_k\left[\frac{X}{(1-q)(1-t^{-1})}\right] \sum_{k\geq 0}u^kt^{(i-1)k}h_k\left[-\frac{X}{(1-q)(1-t^{-1})}\right]\widetilde F\left[\frac{X}{1-t^{-1}}\right]\\
& = \Exp\left[\frac{t^{i}u}{1-q}\frac{X}{1-t^{-1}}\right] \Exp\left[-\frac{t^{i-1}u}{1-q}\frac{X}{1-t^{-1}}\right]\widetilde F\left[\frac{X}{1-t^{-1}}\right]\\
& = \Exp\left[\frac{u}{1-q}\frac{(t^i-t^{i-1})}{1-t^{-1}}X\right] \widetilde F\left[\frac{X}{1-t^{-1}}\right]\\
&=\Exp\left[\frac{ut^i}{1-q}X\right]\widetilde F\left[\frac{X}{1-t^{-1}}\right]=\Pop_{\frac{ut^i}{1-q}}\cdot\phi\ F[X]
\end{align*}
completing the proof.
\end{proof}
From the previous lemma and the observations above, we deduce that 
\begin{equation}
 \phi^{-1}\cdot \Gamma(t^iu,v)\cdot\phi=\GammaJ(t^{-i}u,v)\quad  \text{ for every }i\geq 0.   
\end{equation}

On the other hand, we have the following lemma.
\begin{lem}
The following holds
    \[\phi^{-1}\cdot \Top_{Z}\cdot\phi=\Top_{\frac{Z}{1-t}}.\]
\end{lem}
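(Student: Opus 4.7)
The plan is to reduce the identity $\phi^{-1}\Top_{Z}\phi=\Top_{Z/(1-t)}$ to a verification on a set of algebra generators of $\Lambda$. I would first observe that both sides are algebra morphisms (up to the scalar twist $t\mapsto 1/t$ coming from $\phi$): indeed $\Top_Z$ is by definition the plethystic substitution $X\mapsto X+Z$, hence multiplicative, while $\phi$ is multiplicative on the power-sum basis together with the involution $t\mapsto 1/t$ on $(q,t)$-scalars, and so is $\phi^{-1}$. Consequently both $\phi^{-1}\Top_{Z}\phi$ and $\Top_{Z/(1-t)}$ respect products, so it suffices to check the identity on the generators $p_k[X]$ for $k\geq 1$.

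The check on $p_k[X]$ will be a short plethystic calculation. First, $\phi(p_k[X])=p_k\!\left[\tfrac{X}{1-1/t}\right]=\tfrac{p_k[X]}{1-t^{-k}}$; then $\Top_Z$ substitutes $X\mapsto X+Z$ and yields $\tfrac{p_k[X]+p_k[Z]}{1-t^{-k}}$; finally $\phi^{-1}$ swaps $t\leftrightarrow 1/t$ in the scalar coefficient and substitutes $X\mapsto X(1-t)$ in the $X$-part, leaving $Z$ untouched since it plays the role of an independent formal alphabet. Using $p_k[X(1-t)]=(1-t^k)p_k[X]$, the output should collapse to
\[p_k[X]+\tfrac{p_k[Z]}{1-t^k}=p_k\!\left[X+\tfrac{Z}{1-t}\right]=\Top_{Z/(1-t)}p_k[X],\]
which is exactly what we want.

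The main point to be careful about will be the convention governing how $\phi$ and $\phi^{-1}$ act on expressions involving the auxiliary alphabet $Z$: they must be understood as operating solely on the $X$-dependence and on the $(q,t)$-coefficients, while $Z$ is treated as an independent variable. Once this convention is fixed, the lemma is essentially a direct unpacking of the definition of $\phi$ together with the multiplicative reduction above, and I do not foresee any genuine obstacle; this is a substantially simpler version of the argument just given for the previous lemma on $\Pop_Z$, the reason being that the translation operator is the plethystic substitution $X\mapsto X+Z$ and so does not require expanding any $\Exp$-series.
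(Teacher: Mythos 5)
Your proposal is correct and follows essentially the same route as the paper: both reduce to checking the identity on the generators $p_k[X]$ by noting that both sides are algebra homomorphisms, and then carry out the identical plethystic computation tracking $\phi$, $\Top_Z$, and $\phi^{-1}$ on $p_k[X]$. Your explicit flagging of the convention that $\phi^{\pm1}$ acts only on the $X$-dependence and the $(q,t)$-coefficients while leaving the auxiliary alphabet $Z$ untouched is a sensible clarification of a point the paper leaves implicit, but the underlying argument is the same.
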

\begin{proof}
Both sides of the equation are clearly homomorphisms of $\mathbb{Q}(q,t)$-algebras, hence it is enough to check the identity on the generators $p_k[X]$: we have
\begin{align*}
	\phi^{-1}\cdot \Top_{Z}\cdot\phi\, p_k[X] & = \phi^{-1}\cdot \Top_{Z}\, p_k\left[\frac{X}{1-t^{-1}}\right]= \phi^{-1} p_k\left[\frac{X+Z}{1-t^{-1}}\right] \\ & = \phi^{-1} (1-t^{-k})^{-1}(p_k[X] + p_k[Z]) \\ & = (1-t^{k})^{-1}(p_k[X(1-t)] + p_k[Z])\\
	&  = p_k\left[\frac{X(1-t)}{1-t}\right] +p_k\left[\frac{Z}{1-t}\right]\\
	& =p_k\left[X+\frac{Z}{1-t}\right] = \Top_{\frac{Z}{1-t}}\,  p_k[X].
\end{align*}
\end{proof}

Using this lemma and the remarks above, we deduce \cref{eq thm creationJqt 2} by applying $\phi^{-1}$ on \cref{eq creation modified 2}. In a similar way, we obtain \cref{eq thm creationJqt} from \cref{eq thm creation modified}, and the following equation from \cref{eq Gamma}. 

\begin{align}\label{eq GammaJ operator}
\GammaJ(u,v_1)\GammaJ(t^{-1}u,v_2)&\cdots \GammaJ(t^{-(k-1)} u,v_k)  \cdot 1\\
=&\nablaJ \Pop_{\frac{-tu}{1-q}}\nablaJ^{-1}\Exp\left[\frac{-ut^{k-1}X}{1-q}+\frac{u(1-t)X}{1-q}\sum_{1\leq i\leq k}t^{1-i}v_i\right]\nonumber\\
=&\nablaJ \Pop_{\frac{-tu}{1-q}}\nablaJ^{-1}\Exp\left[\frac{-utX}{1-q}-\frac{u(1-t)X}{1-q}\sum_{1\leq i\leq k}t^{1-i}(1-v_i)\right].\nonumber
\end{align}

\section{Macdonald characters}\label{sec characters}

\subsection{Shifted symmetric Macdonald polynomials}
We denote by $\Lambda^*$ the algebra of shifted symmetric functions; see \cref{def:shiftedsymmetric}.  If $f$ is a shifted symmetric function, we consider its evaluation on a Young diagram $\lambda$ defined by
$$f(\lambda):=f(q^{\lambda_1},q^{\lambda_2},\dots ,q^{\lambda_k},1,1,\dots ).$$

It is well known that the space of shifted symmetric functions can be identified to a subspace of functions on Young diagrams through the map
$$f\longmapsto (f(\lambda))_{\lambda\in\mathbb Y}.$$ 

Okounkov introduced a shifted-symmetric generalization of Macdonald polynomials (see also \cite{Knop1997b,Sahi1996}).
\begin{thm}[Shifted Macdonald polynomials]\cite{Okounkov1998}\label{thm shifted Macdonald}
Let $\mu$ be a partition.  There exists a unique function  $\Jsh_\mu(v_1,v_2,\dots)$ such that 
\begin{enumerate}
    \item $\Jsh_\mu$ is shifted symmetric of degree $|\mu|$.
    \item (normalization property)
    $$\Jsh_\mu(\mu)=(-1)^{|\mu|}q^{n(\mu')}t^{-2n(\mu)}\normJ_\mu.$$
    \item (vanishing property) for any partition $\mu\not\subset\lambda$
$$\Jsh_\mu(\lambda)=0.$$
\end{enumerate}
Moreover, the top homogeneous part of $J^*_\mu$ is $\J_\mu(v_1,t^{-1}v_2,t^{-2}v_3, \dots)$.
\end{thm}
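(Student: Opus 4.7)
The plan is to combine a dimension-counting interpolation argument with the celebrated \emph{extra vanishing} phenomenon.

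First, for uniqueness, I would establish the standard interpolation property of shifted symmetric functions: the subspace $\Lambda^*_{\leq n}$ of functions of degree $\leq n$ has dimension equal to the number of partitions of size $\leq n$, and evaluation at such partitions yields a vector space isomorphism onto $\mathbb{Q}(q,t)^{\{\lambda:|\lambda|\leq n\}}$. This can be shown via a triangular basis (for instance, shifted power sums or shifted monomials), comparing with the ordinary symmetric case through the leading-degree map. Applying this with $n=|\mu|$, I observe that among partitions $\lambda$ with $|\lambda|\leq|\mu|$, the only one containing $\mu$ is $\mu$ itself (since $\mu\subseteq\lambda$ and $|\lambda|\leq|\mu|$ force $\lambda=\mu$). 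Property (3) therefore forces $\Jsh_\mu$ to vanish at every such partition except $\mu$, leaving a one-dimensional solution space, and property (2) fixes the scalar.

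Second, for existence, the same interpolation isomorphism produces an $f\in\Lambda^*_{\leq|\mu|}$ realizing the prescribed values at partitions of size $\leq|\mu|$. The real content of the theorem is then the \emph{extra vanishing}: that this $f$ automatically satisfies $f(\lambda)=0$ for all $\lambda$ with $|\lambda|>|\mu|$ and $\mu\not\subset\lambda$, an a priori infinite family of extra constraints. This is the main obstacle. I would follow Okounkov's approach using Macdonald's $q$-difference operators, whose simultaneous shifted-symmetric eigenfunctions are (up to normalization) the $\Jsh_\mu$ and whose eigenvalues separate partitions; one then transports the known vanishing of ordinary Macdonald polynomials through this spectral picture. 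A more self-contained alternative would exploit the creation formula of \cref{thm creationJqt 2}: writing $\Jsh_\mu$ in terms of the action of $\GammaJ$ via the Macdonald character $\Mch_\mu$ (the thread pursued in \cref{sec characters}) may give a direct combinatorial verification of the vanishing, at the cost of heavier bookkeeping.

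Third, for the top homogeneous part, I pass to the associated graded algebra $\operatorname{gr}\Lambda^*$, where shifted symmetry in degree exactly $|\mu|$ becomes ordinary symmetry in the variables $(v_i t^{1-i})$. The top component of $\Jsh_\mu$ is thus an ordinary symmetric function of degree $|\mu|$ in $(v_i t^{1-i})$. Expanding it in the basis $\{\J_\nu(v_1,t^{-1}v_2,\ldots):|\nu|=|\mu|\}$, the triangularity of Macdonald polynomials in the monomial basis, combined with the vanishing at all $\lambda\neq\mu$ of size $|\mu|$, forces it to be a scalar multiple of $\J_\mu(v_1,t^{-1}v_2,t^{-2}v_3,\ldots)$, and the leading monomial coefficient pins the scalar to $1$. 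The genuine difficulty is the extra vanishing in step two; the remaining steps reduce to structural properties of $\Lambda^*$ and the triangularity of the Macdonald basis.
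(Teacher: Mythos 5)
The paper itself does not prove this theorem: it is quoted verbatim from Okounkov \cite{Okounkov1998}, so there is no in-paper argument to compare yours against. Evaluating your sketch on its own merits: the uniqueness argument is sound. The interpolation isomorphism $\Lambda^*_{\leq n}\cong\mathbb{Q}(q,t)^{\{\lambda:\ |\lambda|\leq n\}}$ via evaluation is a standard fact about shifted symmetric functions, and your observation that $\mu\subseteq\lambda$ together with $|\lambda|\leq|\mu|$ forces $\lambda=\mu$ correctly reduces the problem to a one-dimensional interpolation, pinned by the normalization. For existence you correctly isolate the hard point — the extra vanishing at the infinitely many $\lambda$ with $|\lambda|>|\mu|$ and $\mu\not\subset\lambda$ — but you then defer entirely to Okounkov's $q$-difference-operator argument, which is honest but does not constitute a proof. (Your proposed alternative via $\GammaJ$ would be circular in the paper's logical order: \cref{thm shifted-gamma} and \cref{thm Feray} are \emph{consequences} of the present theorem.)

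The genuine gap is in your treatment of the top homogeneous part. You claim that the vanishing of $\Jsh_\mu$ at partitions $\lambda\neq\mu$ of size $|\mu|$ together with triangularity of $\J_\nu$ in the monomial basis forces the top component to be proportional to $\J_\mu(v_1,t^{-1}v_2,\dots)$. But the vanishing conditions constrain the \emph{full} function $\Jsh_\mu$ evaluated at $(q^{\lambda_1},\dots,q^{\lambda_k},1,\dots)$, not its image in $\operatorname{gr}\Lambda^*$; passing to the associated graded discards exactly the lower-order information that enters these evaluations. There is no direct way to read off the top component from the vanishing data alone without additional input — typically one needs either an explicit construction of $\Jsh_\mu$ (e.g., a determinantal formula or a limit from ordinary Macdonald polynomials, where the leading term is manifest) or an inductive argument expressing $\Jsh_\mu$ in terms of $\{\Jsh_\nu:|\nu|<|\mu|\}$ plus a prescribed top term. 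As written, the step ``the vanishing forces it to be a scalar multiple of $\J_\mu$'' is an assertion, not an argument, and it is precisely where the actual content of Okounkov's construction lives.
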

Since these polynomials are defined by their zeros, sometimes they are referred to as interpolation polynomials.

As Macdonald polynomials form a basis of $\Lambda$, using a triangularity argument we deduce that shifted Macdonald polynomials form a basis of $\Lambda^*$. As a consequence we can linearly extend the map $J^{(q,t)}_\mu\longmapsto J^*_\mu $ into an isomorphism 
\begin{equation}\label{eq isomorphism *}
  \begin{array}{ccc}
     \Lambda &\longrightarrow & \Lambda^* \\
     f &\longmapsto & f^*.   
\end{array}
\end{equation}

\begin{rmq}\label{rmq top hom}
Note that it follows from linearity that if $f$ is a homogeneous symmetric function, then the top homogeneous part of $f^*$ is equal to $f(v_1,t^{-1}v_2,\dots)$.
\end{rmq}

\subsection{An explicit isomorphism between the spaces of symmetric and shifted-symmetric functions}
The main purpose of this subsection is to give two explicit formulas for the isomorphism \cref{eq isomorphism *}. The first one, \cref{eq construction 1}, gives the image of a function $f^*$ as a shifted symmetric function  while the second formula, \cref{eq construction 2}, gives this image as a function on Young diagrams. The proof is based on \cref{eq thm creationJqt 2}.
We start with the following lemma.

\begin{lem}\label{lem GammaJ-scalar J}
    For any symmetric function $f$, and any partition $\lambda=[\lambda_1,\lambda_2,\dots,\lambda_k]$ one has
$$\left\langle f,\GammaJ(1,q^{\lambda_1})\GammaJ(t^{-1},q^{\lambda_2})\cdots \GammaJ(t^{-(k-1)},q^{\lambda_{k}}) \cdot 1\right\rangle_{q,t}=\left\langle\Pop_{\frac{1}{1-q}}\nablaJ\cdot f,t^{-n(\lambda)}\J_\lambda\right\rangle _{q,t}.$$
\begin{proof}
    From \cref{eq thm creationJqt 2}, we know that for any function $f$
    $$\left\langle f[X], \GammaJ(u,q^{\lambda_1})\GammaJ(t^{-1}u,q^{\lambda_2}) \cdots \GammaJ(t^{-(k-1)},q^{\lambda_{k}}) \cdot 1\right\rangle_{q,t} 
    =\left\langle f[X],t^{-n(\lambda)}\nablaJ \Top_{\frac{1}{u(1-t)}} \J_\lambda[uX]\right\rangle_{q,t}.$$
Since the operator $\nablaJ$ acts diagonally on the basis of Macdonald polynomials $(\J_\lambda)_{\lambda\in\mathbb Y}$ and this basis is orthogonal with respect to the scalar product $\langle-,-\rangle_{q,t}$, the operator $\nablaJ$ is self dual with this scalar product. Moreover, the dual of $\Top_{\frac{1}{u(1-t)}}$ is $\Pop_{\frac{1}{u(1-q)}}$. We conclude by specializing $u=1$.
\end{proof}
\end{lem}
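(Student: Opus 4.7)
The plan is to derive the identity directly from \cref{thm creationJqt 2} by transferring the operators $\nablaJ$ and $\Top_{\frac{1}{u(1-t)}}$ to the opposite side of the scalar product using two duality facts, and then specializing $u=1$.

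First I would insert \cref{eq thm creationJqt 2} into the left-hand side with a general parameter $u$:
$$\left\langle f,\GammaJ(u,q^{\lambda_1})\GammaJ(t^{-1}u,q^{\lambda_2})\cdots\GammaJ(t^{-(k-1)}u,q^{\lambda_k})\cdot 1\right\rangle_{q,t}=\left\langle f, t^{-n(\lambda)}\nablaJ\,\Top_{\frac{1}{u(1-t)}}\J_\lambda[uX]\right\rangle_{q,t}.$$
The task then reduces to producing the adjoints of $\nablaJ$ and of $\Top_{\frac{1}{u(1-t)}}$ with respect to $\langle-,-\rangle_{q,t}$. For $\nablaJ$, self-adjointness is immediate from \cref{eq nablaJ}: $(\J_\lambda)_{\lambda\in\mathbb{Y}}$ is an orthogonal basis for $\langle-,-\rangle_{q,t}$ and $\nablaJ$ acts diagonally on it, so any operator diagonal in this basis is its own adjoint.

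Next I would show that the adjoint of $\Top_Z$ under $\langle-,-\rangle_{q,t}$ is $\Pop_{Z\frac{1-q}{1-t}}$. Starting from $\Top_Z=\sum_\mu z_\mu^{-1}p_\mu[Z]p_\mu^\perp$ (adjoint of multiplication by $p_\mu$ under the ordinary Hall product), one checks directly that for all $\mu,\nu$,
$$\left\langle p_\nu,\Top_Z\,p_\mu\right\rangle_{q,t}=\left\langle \Pop_{Z\frac{1-q}{1-t}}\,p_\nu,p_\mu\right\rangle_{q,t},$$
using the defining plethystic substitution $\langle f,g\rangle_{q,t}=\langle f,g[X\frac{1-q}{1-t}]\rangle$ and the identity $p_k[Z\frac{1-q}{1-t}]=p_k[Z]\frac{1-q^k}{1-t^k}$ that converts the translation source into the multiplication source. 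Specializing $Z=\frac{1}{u(1-t)}$ gives that the adjoint of $\Top_{\frac{1}{u(1-t)}}$ is $\Pop_{\frac{1}{u(1-q)}}$.

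Combining these two adjunctions we obtain
$$\left\langle f,t^{-n(\lambda)}\nablaJ\,\Top_{\frac{1}{u(1-t)}}\J_\lambda[uX]\right\rangle_{q,t}=\left\langle \Pop_{\frac{1}{u(1-q)}}\nablaJ\cdot f,\,t^{-n(\lambda)}\J_\lambda[uX]\right\rangle_{q,t},$$
and setting $u=1$ yields precisely the stated identity. The main technical obstacle is the $\Top/\Pop$ duality: it is where the asymmetry between $q$ and $t$ enters, and it is what accounts for the change from $\frac{1}{1-t}$ inside $\Top$ in \cref{thm creationJqt 2} to $\frac{1}{1-q}$ inside $\Pop$ in the statement of the lemma.
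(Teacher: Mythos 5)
Your proof follows exactly the paper's argument: substitute \cref{eq thm creationJqt 2}, move $\nablaJ$ across via self-adjointness (diagonal on an orthogonal basis), move $\Top$ across via its adjoint $\Pop$, and set $u=1$; you merely unpack the $\Top/\Pop$ duality a little more explicitly than the paper does. One small slip: the adjoint of $\Top_Z$ with respect to $\langle-,-\rangle_{q,t}$ is $\Pop_{Z\frac{1-t}{1-q}}$, not $\Pop_{Z\frac{1-q}{1-t}}$ as you wrote in the intermediate general formula (since the adjoint of $p_k^\perp$ under $\langle-,-\rangle_{q,t}$ is multiplication by $p_k[X\frac{1-t}{1-q}]$); your specialization to $Z=\frac{1}{u(1-t)}$, giving $\Pop_{\frac{1}{u(1-q)}}$, is nonetheless the correct one, consistent with the factor $\frac{1-t}{1-q}$ rather than the one you stated.
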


We have the following theorem.

\begin{thm}\label{thm shifted-gamma}
    For any symmetric function $f$ and any $k\geq 1$, we have
    \begin{equation}\label{eq construction 1}
      f^*(v_1,\dots,v_k)=\left\langle f, \GammaJ(1,v_1)\GammaJ(t^{-1},v_2)\cdots\GammaJ(t^{-(k-1)},v_k)\cdot 1\right\rangle_{q,t}.  
    \end{equation}
    
Equivalently, for any Young diagram $\lambda$, 
\begin{equation}\label{eq construction 2}
    f^*(\lambda)=\left\langle\Pop_{\frac{1}{1-q}}\nablaJ\cdot f,t^{-n(\lambda)}\J_\lambda\right\rangle _{q,t}.
\end{equation}
    
\begin{proof}
First, notice that \cref{eq construction 1} implies \cref{eq construction 2} by \cref{lem GammaJ-scalar J}.
By definition of the isomorphism $f\mapsto f^*$, we should prove that for any partition $\lambda$ the function
\begin{equation}\label{eq RHS thm star-Gamma}
  \left\langle J_\lambda^{(q,t)}, \GammaJ(1,v_1)\GammaJ(t^{-1},v_2)\cdots\GammaJ(t^{-(k-1)},v_k)1\right\rangle_{q,t}
\end{equation}
satisfies the three properties of \cref{thm shifted Macdonald}.
First, from \cref{eq GammaJ operator}, we know that for any~$k$
$$\GammaJ(u,v_1)\cdots\GammaJ(ut^{-(k-1)},v_k)\GammaJ(ut^{-k},1)\cdot 1
=\GammaJ(u,v_1)\cdots\GammaJ(ut^{-(k-1)},v_k)\cdot 1,$$
and that for any $n,k\geq0$ the coefficient 
$$[u^n]\GammaJ(u,v_1)\cdots\GammaJ(ut^{-(k-1)},v_k)\cdot 1$$
is a homogeneous symmetric function of degree $n$ in the variables $(x_i)_{i\geq1}$, with coefficients which are shifted symmetric in $(v_i)_{1\leq i\leq k}.$
This implies that for any symmetric function $f$,  the right-hand side of \cref{eq construction 1} is a well defined shifted symmetric function in the variables $(v_i)_{1\leq i\leq k}$. All this gives property $(1)$. 

In order to obtain property $(2)$, we use \cref{lem GammaJ-scalar J} with $f=\J_\lambda$. We get that 
\begin{align*}
  \left\langle \J_\lambda,\GammaJ(1,q^{\lambda_1})\GammaJ(t^{-1},q^{\lambda_2})\cdots \GammaJ(t^{-(k-1)} ,q^{\lambda_{k}})\cdot 1\right\rangle_{q,t}
  &=\left\langle\Pop_{\frac{1}{1-q}}\nablaJ\cdot \J_\lambda,t^{-n(\lambda)}\J_\lambda\right\rangle _{q,t}\\
  &=(-1)^{|\lambda|}q^{n(\lambda')}t^{-2n(\lambda)}\normJ_\lambda.
\end{align*}
Here we used \cref{eq nablaJ}, and the fact that $[z^0]\Pop_{\frac{z}{1-q}}=1$. 
 This corresponds to property $(2)$.
Finally, for any partitions $\mu$ and $\lambda$, one has
\begin{align*}
  \left\langle \J_\mu,\GammaJ(1,q^{\lambda_1})\GammaJ(t^{-1},q^{\lambda_2})\cdots \GammaJ(t^{-(k-1)} ,q^{\lambda_{k}})\cdot 1 \right\rangle_{q,t}
  &=\left\langle\Pop_{\frac{1}{1-q}}\nablaJ\cdot \J_\mu,t^{-n(\lambda)}\J_\lambda\right\rangle _{q,t}.
\end{align*}
But from \cref{thm Pieri rule}, we know that the coefficient of $\J_\lambda$ in $\Pop_{\frac{1}{1-q}}\nablaJ\cdot \J_\mu$ is zero unless $\mu\subset \lambda$.   This proves that (\ref{eq RHS thm star-Gamma}) satisfies property $(3)$, completing the proof of the theorem.
\end{proof}
\end{thm}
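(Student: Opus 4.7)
The plan is to prove \cref{eq construction 1} first and then observe that it implies \cref{eq construction 2} by specializing $v_i = q^{\lambda_i}$ and applying \cref{lem GammaJ-scalar J}. To establish \cref{eq construction 1}, since the assignment $f \mapsto f^*$ is the $\mathbb{Q}(q,t)$-linear extension of $\J_\mu \mapsto J^*_\mu$ and the right-hand side is linear in $f$, it suffices to verify the identity for $f = \J_\mu$. By the characterization in \cref{thm shifted Macdonald}, this reduces to checking three properties of the right-hand side as a function of $(v_1,\dots,v_k)$: (i) it is shifted symmetric, of degree $|\mu|$, and satisfies the compatibility in $k$; (ii) it takes the value $(-1)^{|\mu|}q^{n(\mu')}t^{-2n(\mu)}\normJ_\mu$ at $\lambda=\mu$; (iii) it vanishes at any $\lambda$ with $\mu\not\subset\lambda$.

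For property (i) I would invoke \cref{eq GammaJ operator}: the expression $\GammaJ(u,v_1)\cdots\GammaJ(t^{-(k-1)}u,v_k)\cdot 1$ depends on the $v_i$ only through the combination $\sum_{i=1}^{k} t^{1-i}(1-v_i)$. This is manifestly symmetric in the variables $(v_1, t^{-1}v_2, \dots, t^{-(k-1)}v_k)$ and vanishes identically when $v_k=1$, producing simultaneously the shifted symmetry and the compatibility relation between the $k$- and $(k-1)$-variable versions upon pairing with any $f$. Specializing $u=1$ and pairing with the homogeneous symmetric function $\J_\mu$ of degree $|\mu|$ extracts a polynomial of degree $|\mu|$ in the shifted variables, yielding (i).

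For properties (ii) and (iii), \cref{lem GammaJ-scalar J} rewrites the right-hand side at $\lambda$ as
\[
\left\langle \Pop_{\tfrac{1}{1-q}}\nablaJ \J_\mu,\, t^{-n(\lambda)} \J_\lambda\right\rangle_{q,t}.
\]
Since $\Pop_{1/(1-q)}\cdot g = \sum_{n\geq 0} h_n\!\left[\tfrac{X}{1-q}\right]\cdot g$, the Pieri rule (\cref{thm Pieri rule}) expands $\Pop_{1/(1-q)}\nablaJ \J_\mu$ as a $\mathbb{Q}(q,t)$-combination of $\J_\xi$ with $\mu\subseteq\xi$; by orthogonality of the Macdonald basis with respect to $\langle -,-\rangle_{q,t}$, the scalar product vanishes unless $\mu\subseteq\lambda$, which is (iii). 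For (ii), at $\lambda=\mu$ only the $n=0$ summand $h_0=1$ contributes to the pairing with $\J_\mu$. Combining \cref{eq nablaJ} with the standard identities $\sum_{\Box\in\mu}a'(\Box) = n(\mu')$ and $\sum_{\Box\in\mu}\ell'(\Box) = n(\mu)$, together with $\langle \J_\mu, \J_\mu\rangle_{q,t} = \normJ_\mu$, yields exactly the claimed normalization.

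The main technical step is property (i); everything hinges on unpacking \cref{eq GammaJ operator} to confirm that the $v_i$-dependence really enters only through the symmetric combination $\sum_i t^{1-i}(1-v_i)$, after which both the shifted symmetry and the compatibility in $k$ follow transparently. Properties (ii) and (iii) should then drop out quickly from the Pieri rule and orthogonality.
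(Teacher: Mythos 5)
Your proposal is correct and follows essentially the same route as the paper: reduce by linearity to $f=\J_\mu$, then verify the three characterizing properties of $J^*_\mu$ using \cref{eq GammaJ operator} for the shifted symmetry and compatibility (property 1) and \cref{lem GammaJ-scalar J} together with the Pieri rule and orthogonality for the normalization and vanishing (properties 2 and 3). The only cosmetic difference is that the paper expresses the compatibility by appending a factor $\GammaJ(ut^{-k},1)$, whereas you set $v_k=1$ directly; both amount to the observation that the $i=k$ term in $\sum_i t^{1-i}(1-v_i)$ vanishes.
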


\begin{rmq}
    The isomorphism given in \cref{eq construction 2} has been implicitly described by Lassalle, see \cite[Definition 1]{Lassalle1998}. However, the formula of \cref{eq construction 1} seems to be new. Note that these two formulas are complementary since \cref{eq construction 1} gives the shifted symmetry property while \cref{eq construction 2} is more suitable to prove the vanishing conditions.
\end{rmq}

\subsection{Macdonald characters are shifted symmetric}\label{ssec characters}

Recall the definition from \cref{thm Macdonald characters} of \emph{Macdonald characters}, i.e.
\begin{align*}
	\Mch_{\mu,k}(v_1,v_2,\dots ):
	=\left\langle p_\mu,\GammaJ(1,v_1)\GammaJ(t^{-1},v_2)\cdots \GammaJ(t^{-{k-1}},v_k) \cdot 1\right\rangle_{q,t}.
\end{align*}

We are now ready to prove \cref{thm Macdonald characters}.
\begin{proof}[Proof of \cref{thm Macdonald characters}]
    By definition, for any $k\geq1$ 
    \begin{align*}
      \Mch_{\mu,k}(v_1,v_2,\dots,v_k)
      &=\left\langle p_\mu,\GammaJ(1,v_1)\GammaJ(t^{-1},v_2)\cdots \GammaJ(t^{-{k-1}},v_k) \cdot 1\right\rangle_{q,t}\\
      &=p_\mu^*(v_1,v_2,\dots,v_k).
    \end{align*}
    In particular, $(\Mch_{\mu,k})_{k\geq 1}$  defines a shifted symmetric function. 
\end{proof}

We deduce the following corollary.
\begin{cor}\label{cor char basis}
The Macdonald characters $(\Mch_\mu)_{\mu\in\mathbb Y}$ form a basis of $\Lambda^*$. 
\begin{proof}
From the definition of $\Mch_\mu$ and \cref{thm shifted-gamma} we have 
\begin{align}\label{eq Mch-p_mu^*}
    \Mch_{\mu}=p_\mu^*.
\end{align}
We conclude using the fact $(p_\mu)_{\mu\in\mathbb Y}$ is basis of $\Lambda$ and that $f\longmapsto f^*$ is an isomorphism between $\Lambda$ and $\Lambda^*$. 
\end{proof}
\end{cor}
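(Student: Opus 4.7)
My plan is to identify each Macdonald character $\Mch_\mu$ with the image of the power-sum $p_\mu$ under the isomorphism $f\mapsto f^*$ established in \cref{eq isomorphism *}, and then transport the basis property across this isomorphism.

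First, I would compare the defining formula of $\Mch_{\mu,k}$ in \cref{eq def characters} with the explicit formula for the shifted-symmetric image $f^*$ given in \cref{eq construction 1} of \cref{thm shifted-gamma}. Specializing $f=p_\mu$ in \cref{eq construction 1} gives exactly the right-hand side of \cref{eq def characters}, so $\Mch_{\mu,k}(v_1,\dots,v_k)=p_\mu^{*}(v_1,\dots,v_k)$ for every $k\geq 1$. Passing to the limit (i.e.\ regarding the sequences as elements of $\Lambda^*$), we obtain the identity $\Mch_\mu=p_\mu^*$.

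Next I would invoke two standard facts: the family $(p_\mu)_{\mu\in\mathbb{Y}}$ is a basis of $\Lambda$ (over $\mathbb{Q}(q,t)$), and the map $f\mapsto f^*$ was already shown, right after \cref{thm shifted Macdonald}, to be a linear isomorphism from $\Lambda$ onto $\Lambda^*$ (this uses that both $(J_\mu^{(q,t)})_{\mu}$ and $(J^*_\mu)_\mu$ are bases, via the triangularity noted there). Applying the isomorphism to the basis $(p_\mu)_\mu$ yields that $(p_\mu^*)_\mu=(\Mch_\mu)_\mu$ is a basis of $\Lambda^*$, which is the desired statement.

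There is essentially no obstacle here: the real work was done in \cref{thm shifted-gamma}, whose formula \cref{eq construction 1} was precisely engineered so that evaluating it on $p_\mu$ recovers the definition of $\Mch_\mu$. The corollary is then a one-line consequence of the isomorphism transporting a basis to a basis, and the entire proof can be presented in a few lines by citing \cref{eq construction 1} to write $\Mch_\mu=p_\mu^*$ and then citing the basis property of power-sums.
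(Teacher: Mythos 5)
Your argument is correct and is essentially the same as the paper's: both identify $\Mch_\mu=p_\mu^*$ by comparing \cref{eq def characters} with \cref{eq construction 1} of \cref{thm shifted-gamma}, and then transport the basis $(p_\mu)_\mu$ of $\Lambda$ through the isomorphism $f\mapsto f^*$. No meaningful difference in approach.
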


From \cref{eq construction 2}, we get that for any Young diagram $\lambda$

\begin{equation}
    \Mch_\mu(\lambda)=\left\langle\Pop_{\frac{1}{1-q}}\nablaJ\cdot p_\mu,t^{-n(\lambda)}\J_\lambda\right\rangle _{q,t}.
\end{equation}
This can be rewritten as

\begin{equation*}
     \Mch_\mu(\lambda)=\left\langle p_\mu,t^{-n(\lambda)}\nablaJ\Top_{\frac{1}{1-t}}\cdot \J_\lambda\right\rangle _{q,t}.
\end{equation*}
Hence,
\begin{equation}\label{eq characters J}
  \Mch_\mu(q^{\lambda_1},\dots q^{\lambda_k},1,\dots)=\left\{
\begin{array}{cc}
     \left\langle p_\mu, \nablaJ h^\perp_{|\lambda|-|\mu|}\left[\frac{X}{1-t}\right]\cdot t^{-n(\lambda)}J^{(q,t)}_\lambda\right\rangle_{q,t} & \text{ if } |\mu|\leq |\lambda| \\
    0  & \text{ otherwise.}
\end{array}\right.  
\end{equation}

\noindent In particular, when $|\mu|=|\lambda|$ the characters $\Mch_\mu(\lambda)$ are given by the power-sum expansion of $\J_\lambda$:
\begin{equation}\label{eq J characters}
(-1)^{|\lambda|}q^{n(\lambda')}t^{-2n(\lambda)}\J_\lambda=\sum_{\mu\vdash |\lambda|}\frac{\Mch_\mu(\lambda)}{z_\mu(q,t)} p_\mu.  \end{equation}

\subsection{Characterization theorem}
We give here a characterization theorem for $\Mch_\mu$. This characterization has been observed by Féray in the case of Jack polynomials and proved very useful in practice (see \cite{BenDaliDolega2023}). It can be seen as an analog of \cref{thm shifted Macdonald} for characters.

\begin{thm}\label{thm Feray}
Let $\mu$ be a partition. $\Mch_\mu$ is the unique function which satisfies the following properties.
\begin{enumerate}
    \item $\Mch_\mu$ is shifted symmetric of degree $|\mu|$. 
    \item $\Mch_\mu(\lambda)=0$ for any partition  $|\lambda|<|\mu|$.
    \item the top homogeneous part of $\Mch_\mu$ is $p_\mu(v_1,t^{-1}v_2,t^{-2}v_3,\dots).$
\end{enumerate}
\end{thm}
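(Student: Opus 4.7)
The plan is to prove existence and uniqueness separately. For existence, each of the three properties has essentially already been established: property (1) follows from \cref{thm Macdonald characters} combined with the identification $\Mch_\mu = p_\mu^*$ in \eqref{eq Mch-p_mu^*} (the degree is exactly $|\mu|$ because, by \cref{rmq top hom}, the top homogeneous part is $p_\mu(v_1, t^{-1}v_2, \dots)$, which is itself homogeneous of degree $|\mu|$); property (2) is the second case of \eqref{eq characters J}; and property (3) is \cref{rmq top hom} applied to $f = p_\mu$.

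For uniqueness, suppose $g$ is another function satisfying (1)-(3) and set $h := \Mch_\mu - g$. Properties (1) and (3), applied to both $\Mch_\mu$ and $g$, force $h$ to be shifted symmetric of degree strictly less than $|\mu|$ (the top homogeneous parts cancel), while property (2) gives $h(\lambda) = 0$ for every $\lambda$ with $|\lambda| < |\mu|$. The goal is to show $h = 0$. Using \cref{cor char basis}, I would expand $h = \sum_\nu c_\nu \Mch_\nu$; since $\Mch_\nu$ has degree $|\nu|$ by the existence part (applied with $\nu$ in place of $\mu$), only $\nu$ with $|\nu| < |\mu|$ can contribute.

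I would then proceed by induction on $n \in \{0, 1, \dots, |\mu|-1\}$, showing $c_\nu = 0$ for all $\nu \vdash n$. Evaluating at any $\lambda \vdash n$ and using property (2) applied to each $\Mch_\nu$ with $|\nu| > n$ (which kills those terms) together with the inductive hypothesis (which kills terms with $|\nu| < n$) gives
\[
0 \;=\; h(\lambda) \;=\; \sum_{\nu \vdash n} c_\nu \Mch_\nu(\lambda).
\]
As $\lambda$ ranges over partitions of $n$, this is a homogeneous linear system in $(c_\nu)_{\nu \vdash n}$, so it suffices to check that the matrix $\bigl(\Mch_\nu(\lambda)\bigr)_{\nu, \lambda \vdash n}$ is invertible. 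This is the main (and essentially only) nontrivial point, and it is immediate from \eqref{eq J characters}: up to the nonzero scalars $z_\nu(q,t)$ and $(-1)^n q^{n(\lambda')} t^{-2n(\lambda)}$, this matrix realizes a change of basis between $(p_\nu)_{\nu \vdash n}$ and $(\J_\lambda)_{\lambda \vdash n}$ in $\Lambda^{(n)}$, and both are bases. The induction then yields $h = 0$, completing the proof.
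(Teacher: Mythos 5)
Your proof is correct, and the existence part matches the paper's almost verbatim (though you invoke \eqref{eq characters J} for property (2) where the paper instead notes that $\Mch_\mu$ is a combination of $J^*_\xi$ with $|\xi|=|\mu|$ and appeals to the vanishing of $J^*_\xi$; both are fine).

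The uniqueness part, however, takes a genuinely different route. The paper isolates a self-contained statement (\cref{lem uniqueness}): a shifted symmetric function of degree $\leq n$ vanishing on all $\lambda$ with $|\lambda|\leq n$ must be zero, proved by expanding in the interpolation basis $J^*_\xi$ and exploiting the vanishing property $J^*_\xi(\xi_0)=0$ for $\xi\not\subset\xi_0$ together with the normalization $J^*_{\xi_0}(\xi_0)\neq 0$ via a minimal-counterexample argument. You instead expand the difference $h$ in the Macdonald character basis $(\Mch_\nu)$ (available by \cref{cor char basis}), induct on $n$, and reduce the problem to the invertibility of the ``character table'' $\bigl(\Mch_\nu(\lambda)\bigr)_{\nu,\lambda\vdash n}$, which you get from \eqref{eq J characters} as a change of basis between $(p_\nu)$ and $(\J_\lambda)$ in $\Lambda^{(n)}$. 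Both arguments are essentially triangular in nature, but they rest on different structural facts: the paper leans on the defining interpolation properties of $J^*_\xi$ (and its \cref{lem uniqueness} is reused later, e.g.\ in \cref{lem bounds}), whereas your argument bypasses the $J^*$ machinery entirely and only uses the nondegeneracy of the evaluation pairing in each graded piece. Your route is a bit more elementary and self-contained within the character framework; the paper's is more modular and aligns with how shifted Macdonald polynomials are usually handled. One small point worth making explicit: the step ``only $\nu$ with $|\nu|<|\mu|$ can contribute'' to the expansion of $h$ uses the linear independence of the top homogeneous parts $p_\nu(v_1,t^{-1}v_2,\dots)$ (property (3) for each $\Mch_\nu$), not merely that each $\Mch_\nu$ has degree $|\nu|$.
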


The proof is very similar.
We start by the following lemma.

\begin{lem}\label{lem uniqueness}
    Let $n\geq1$. If $G$ is shifted symmetric function of degree less or equal to $n$ with
      \begin{equation}\label{eq G van}
    G(\lambda)=0 \text{ for $|\lambda|\leq n$}. 
\end{equation}
Then $G=0$. 
    \begin{proof}
We expand $G$ in the $J^*_\xi$ basis 
\begin{equation}\label{eq G J}
  G=\sum_\xi c_\xi J^*_\xi.  
\end{equation}

As $\deg(G)\leq n,$ the sum can be restricted to partitions $\xi$ of size at most $n$. 
We will prove by contradiction that $G=0$, i.e.\ that $c_\xi=0$
for all partitions $\xi$ with $|\xi|\leq n$. Assume this is not
the case and consider a partition $\xi_0$ of minimal size such that
$c_{\xi_0}\neq 0$.
\cref{eq G van} gives $G(\xi_0)=0$ since $|\xi_0|\leq n$. On the other
hand, $J_\xi^*(\xi_0)=0$ if $\xi_0$ does not contain $\xi$ (see
property (3) of \cref{thm shifted Macdonald}). Therefore the right hand side of \cref{eq G J} evaluated on
$\xi_0$ vanishes for
all partitions $\xi$ except for $\xi=\xi_0$. Moreover,
$c_{\xi_0}\neq 0$ by the assumptions and
$J^*_{\xi_0}(\xi_0)\neq0$ from property (2) of \cref{thm shifted Macdonald}. Therefore
$G(\xi_0) = c_{\xi_0} J^{*}_{\xi_0}(\xi_0) \neq 0$, and we have reached a contradiction. Hence, $G=0$ as required.    \end{proof}
\end{lem}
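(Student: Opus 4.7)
The plan is to argue by contradiction using the basis of shifted Macdonald polynomials $\{J^*_\xi\}_{\xi\in\mathbb{Y}}$ of $\Lambda^*$, which is available since the $J^{(q,t)}_\xi$ form a basis of $\Lambda$ and $f\mapsto f^*$ is an isomorphism. The key idea is that the vanishing properties in \cref{thm shifted Macdonald} make this basis triangular with respect to the partial order of inclusion of Young diagrams, so a linear combination that vanishes on all sufficiently small partitions must be trivial.

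Concretely, I would first expand
\[G=\sum_{\xi\in\mathbb Y} c_\xi J^*_\xi,\]
and use the hypothesis $\deg(G)\leq n$ together with the fact that $\deg(J^*_\xi)=|\xi|$ to conclude that $c_\xi=0$ whenever $|\xi|>n$. This reduces the sum to finitely many terms indexed by partitions of size at most $n$. Then, assuming for contradiction that $G\neq 0$, I would pick a partition $\xi_0$ of minimal size among those with $c_{\xi_0}\neq 0$; by the previous reduction $|\xi_0|\leq n$.

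Next I evaluate both sides at $\lambda=\xi_0$. Since $|\xi_0|\leq n$, the hypothesis gives $G(\xi_0)=0$. On the right-hand side, property (3) of \cref{thm shifted Macdonald} ensures $J^*_\xi(\xi_0)=0$ unless $\xi\subseteq\xi_0$. The partitions $\xi$ with $\xi\subseteq\xi_0$ and $\xi\neq\xi_0$ all satisfy $|\xi|<|\xi_0|$, and by the minimality of $\xi_0$ those coefficients $c_\xi$ vanish. Hence the only surviving term is $\xi=\xi_0$, giving
\[0=G(\xi_0)=c_{\xi_0}J^*_{\xi_0}(\xi_0).\]
By property (2) of \cref{thm shifted Macdonald}, $J^*_{\xi_0}(\xi_0)\neq 0$, so we must have $c_{\xi_0}=0$, contradicting the choice of $\xi_0$.

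There is no real obstacle here: the argument is a clean triangularity argument and the only thing one must be careful about is justifying that one can restrict to finitely many terms in the expansion (which uses the degree bound and the fact that $J^*_\xi$ is homogeneous of degree $|\xi|$ only in its top part, but since $\deg(J^*_\xi)=|\xi|$, terms with $|\xi|>n$ cannot appear in a function of degree $\leq n$). The rest is a direct use of the defining vanishing/normalization properties of the shifted Macdonald polynomials.
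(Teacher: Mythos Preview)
Your proof is correct and essentially identical to the paper's own argument: both expand $G$ in the $J^*_\xi$ basis, restrict to $|\xi|\leq n$ by the degree bound, choose a minimal $\xi_0$ with $c_{\xi_0}\neq 0$, and derive a contradiction by evaluating at $\xi_0$ using properties (2) and (3) of \cref{thm shifted Macdonald}. If anything, your write-up is slightly more explicit in noting that the terms with $\xi\subsetneq\xi_0$ are killed by the minimality of $\xi_0$ rather than by the vanishing property.
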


We now prove the characterization theorem.
\begin{proof}[Proof of \cref{thm Feray}]
    We start by proving that $\Mch_\mu$ satisfies these three properties. The first property is given in \cref{thm Macdonald characters}, and from \cref{rmq top hom}, we know that its top homogeneous part is $p_\mu(v_1,t^{-1}v_2,t^{-2}v_3,\dots)$. Moreover, $\Mch_\mu$ is a linear combination of $J_\xi^{*}$ for $\xi$ of size $|\mu|$. This gives the vanishing property.

    Let us now prove the uniqueness. Let $F$ be a shifted symmetric function of degree $|\mu|$ with the same top degree part as $\Mch_\mu$, and such that $F(\lambda)=0$ for any $|\lambda |<|\mu|$. Set $G:=F-\Mch_\mu$. Then $G$ is a shifted symmetric function of degree at most $|\mu|-1$ with 
$G(\lambda)=0 \text{ for $|\lambda|<|\mu|$}.$ Using \cref{lem uniqueness} we get that $G=0$ hence $F=\Mch_\mu$ which gives the uniqueness.
 \end{proof}

\subsection{Positivity conjectures about the characters \texorpdfstring{$\Mch_\mu$}{}}

We conclude this section with some intriguing positivity conjectures about the operator $\GammaJ$.

\begin{conj}\label{conj Gamma-p}
    The operator $\GammaJ(z,v)$ acts positively on the basis $p_\mu\left[X\frac{1-t}{1-q}\right]$ in the variables $q',\gamma, -v, -z$. More precisely, if $\mu$ and $\nu$ are two partitions such that $|\nu|-|\mu|=n$, then
    $$(-1)^nt^{|\mu|}\left\langle[z^n]\GammaJ(z,v)\cdot p_\mu\left[X\frac{1-t}{1-q}\right],p_\nu\left[X\frac{1-q}{1-t}\right]\right\rangle$$
    is a polynomial in the variables $-v,q':=q-1,\gamma:=t-1$ with non-negative integer coefficients.
\end{conj}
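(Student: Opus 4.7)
The plan is to transform the matrix coefficient in the conjecture into a finite, explicit sum by combining the Pieri-type action of $\GammaJ$ on the $\J_\lambda$ basis with the Macdonald-character expansion of power sums, and then to analyze the resulting expression after the change of variables of \cref{sec Jack polynomials}.

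First, I would derive a Pieri-type formula for $\GammaJ$. Starting from $\GammaJ(z,v)=\DeltaJ_{1/v}\,\Pop_{zv(1-t)/(1-q)}\,\DeltaJ_{1/v}^{-1}$, combining the expansion $\Pop_{zv(1-t)/(1-q)}\cdot f=\sum_{n\ge 0}z^n v^n\,h_n[X(1-t)/(1-q)]\cdot f$ with the Pieri rule (\cref{thm Pieri rule}) and the diagonal action of $\DeltaJ_{1/v}$ on $\J_\lambda$, I obtain
\[
[z^n]\GammaJ(z,v)\cdot \J_\lambda \;=\; \sum_{\substack{\xi/\lambda\text{ horiz.\ strip}\\|\xi/\lambda|=n}} \eta_{\lambda,\xi}\!\!\prod_{\Box\in \xi/\lambda}\!\!\bigl(v - q^{a'(\Box)}t^{-\ell'(\Box)}\bigr)\,\J_\xi,
\]
where $\eta_{\lambda,\xi}$ are the Macdonald--Pieri coefficients and the box factors arise from the ratio of $\DeltaJ_{1/v}$-eigenvalues on $\J_\xi$ and $\J_\lambda$.

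Second, I would translate the matrix coefficient to the $\J$-basis by expanding the power sums through Macdonald characters. From \cref{eq J characters} together with the orthogonality of $(\J_\lambda)$ for $\langle\cdot,\cdot\rangle_{q,t}$ one obtains $\langle\J_\xi,p_\nu\rangle_{q,t}=(-1)^{|\xi|}q^{-n(\xi')}t^{2n(\xi)}\Mch_\nu(\xi)$, and after inversion, a similar expansion of $p_\mu$ in the $\J$-basis. Using also the factorization $p_\mu[X(1-t)/(1-q)]=p_\mu[(1-t)/(1-q)]\,p_\mu[X]$ and the defining identity $\langle f,g[X(1-q)/(1-t)]\rangle=\langle f,g\rangle_{q,t}$, one rewrites
\[
\left\langle[z^n]\GammaJ(z,v)\cdot p_\mu\!\left[X\tfrac{1-t}{1-q}\right],\,p_\nu\!\left[X\tfrac{1-q}{1-t}\right]\right\rangle \;=\; p_\mu\!\left[\tfrac{1-t}{1-q}\right]\left\langle[z^n]\GammaJ(z,v)\cdot p_\mu,\,p_\nu\right\rangle_{q,t}
\]
as a finite double sum over $\lambda\vdash|\mu|$ and $\xi\vdash|\nu|$, weighted by products of $\Mch_\mu(\lambda)\Mch_\nu(\xi)$, Pieri coefficients $\eta_{\lambda,\xi}$, explicit $q,t$-normalizations, and the box factors $\prod_{\Box\in\xi/\lambda}(v-q^{a'(\Box)}t^{-\ell'(\Box)})$. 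I would then apply the change of variables $(q,t)\mapsto(q',\gamma)$ of \cref{eq parametrization} and distribute the prefactor $(-1)^n t^{|\mu|}$: the sign $(-1)^n$ absorbs the $n$ factors $(v-q^{a'}t^{-\ell'})=-(-v+q^{a'}t^{-\ell'})$ so that $-v$ appears with positive sign, while $t^{|\mu|}$ together with careful tracking of the remaining sign and normalization should clear the denominators from $\normJ_\lambda$, $\normJ_\xi$ and from $p_\mu[(1-t)/(1-q)]$.

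The final positivity is the main obstacle and the real content of the conjecture: the previous step produces an \emph{alternating} integer-polynomial sum in $-v,q',\gamma$, and algebraic manipulation on that sum is very unlikely to suffice on its own. In view of \cref{prop g c} and \cref{prop coef c-supernabla}, this positivity is intertwined with the Macdonald Matchings--Jack and $b$-conjectures of \cref{sec GJ conjectures}, themselves open. What is really needed is a combinatorial model for the coefficient, most plausibly a Macdonald-weighted enumeration of non-orientable maps extending the Jack case of \cite{BenDaliDolega2023}. A realistic step-by-step attack is therefore: (i) verify the Jack specialization, where the map model of \cite{BenDaliDolega2023} provides positivity directly; (ii) settle the case $n=1$, where horizontal strips reduce to a single cell and the entire formula collapses drastically; (iii) construct a conjectural combinatorial model in the full $(q,t)$ setting and prove that its generating function matches the algebraic expression above term by term.
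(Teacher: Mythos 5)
The statement you are addressing is a \emph{conjecture} in the paper, not a theorem: the authors supply no proof and only report computational evidence ($|\nu|\leq 9$). So there is no argument in the paper for your outline to be compared against, and correctly, you do not claim to prove it.

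That said, your preparatory reductions are sound and match the spirit of the paper. Your Pieri-type formula
\[
[z^n]\GammaJ(z,v)\cdot \J_\lambda=\sum_{\substack{\xi/\lambda\text{ horiz.\ strip}\\|\xi/\lambda|=n}}\eta_{\lambda,\xi}\prod_{\Box\in\xi/\lambda}\bigl(v-q^{a'(\Box)}t^{-\ell'(\Box)}\bigr)\,\J_\xi
\]
follows correctly from the definition of $\GammaJ$ and \cref{thm Pieri rule}, and it is the integral-form counterpart of \cref{eq Gamma formula}. The reduction
$\langle[z^n]\GammaJ(z,v)\cdot p_\mu[X\tfrac{1-t}{1-q}],p_\nu[X\tfrac{1-q}{1-t}]\rangle=p_\mu[\tfrac{1-t}{1-q}]\langle[z^n]\GammaJ(z,v)\cdot p_\mu,p_\nu\rangle_{q,t}$
is also correct, using multiplicativity of power sums under plethysm by a scalar and the definition of $\langle\cdot,\cdot\rangle_{q,t}$, and the formula $\langle\J_\xi,p_\nu\rangle_{q,t}=(-1)^{|\xi|}q^{-n(\xi')}t^{2n(\xi)}\Mch_\nu(\xi)$ is a correct reading of \cref{eq J characters}. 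One caveat: your assertion that $t^{|\mu|}$ together with ``careful tracking'' will clear the denominators arising from $\normJ_\lambda$, $\normJ_\xi$, the Pieri coefficients $\eta_{\lambda,\xi}$, and $p_\mu[\tfrac{1-t}{1-q}]$ is optimistic and unverified; even the intermediate integrality is not obviously a consequence of the formulas you wrote. But you rightly identify that algebraic manipulation alone will not yield positivity, and that the real content (plausibly a $(q,t)$-map model extending \cite{BenDaliDolega2023}) is the open part. As a review of an open conjecture, your proposal is an honest and reasonable outline, not a proof, and neither is anything in the paper.
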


This conjecture has been tested for $|\nu|\leq 9$. 
We have the following consequence of \cref{conj Gamma-p} which is closely related to a conjecture about Macdonald characters we formulate in the next section; see \cref{conj lassale macdonald}.
\begin{prop}
    \cref{conj Gamma-p} implies that $(-1)^{|\mu|}t^{2(k-1)|\mu|}\Mch_\mu(v_1,v_2,\dots,v_k)$ is a polynomial in the variables $-v_1,\cdots,-v_k$ and the parameters $q':=q-1$ and $\gamma:=t-1$ with non-negative coefficients.

    \begin{proof}
    Let us assume that \cref{conj Gamma-p} holds.
        We want to prove by induction on $k$ that for any~$n$
        $$(-1)^nt^{2(k-1)n}[z^n]\GammaJ(z,v_1)\GammaJ(t^{-1}z,v_2)\cdots \GammaJ(t^{-(k-1)}z,v_k)\cdot1$$ has 
        a positive polynomial expansion on the basis $\left(p_\mu\left[X\frac{1-t}{1-q}\right]\right)_{\mu\vdash n}.$
This would imply the claim of the proposition by the definition of Macdonald characters \cref{eq def characters}.       
    For $k=1$ this is a direct consequence of \cref{conj Gamma-p}. We assume now that the induction assumption holds for $k$. We then get that
    \begin{multline*}
      (-1)^n t^{2(k-1)n}[z^n]\GammaJ(z,v_2)\GammaJ(t^{-1}z,v_3)\cdots \GammaJ(t^{-(k-1)}z,v_{k+1})\cdot1\\  
      =(-1)^n t^{(2k-1)n}[z^n]\GammaJ(t^{-1}z,v_2)\GammaJ(t^{-2}z,v_3)\cdots \GammaJ(t^{-k}z,v_{k+1})\cdot1
    \end{multline*}
    also has positive expansion.  Applying $t^n\GammaJ(z,v_1)$ on the left and using \cref{conj Gamma-p}, we obtain that 
    $$\left((-1)^m[z^m]\GammaJ(z,v_1)\right)(-1)^nt^{2kn}[z^n]\GammaJ(t^{-1}z,v_2)\GammaJ(t^{-2}z,v_3)\cdots \GammaJ(t^{-k}z,v_{k+1})\cdot1$$
    has positive expansion on $\left(p_\mu\left[X\frac{1-t}{1-q}\right]\right)_{\mu\vdash (n+m)}.$
    In particular this is also the case for 
    $$(-1)^{m+n}t^{2k(n+m)}\left([z^m]\GammaJ(z,v_1)\right)[z^n]\GammaJ(t^{-1}z,v_2)\GammaJ(t^{-2}z,v_3)\cdots \GammaJ(t^{-k}z,v_{k+1})\cdot1.$$
    Since this holds true for any $n,m\geq 0$, we deduce the induction hypothesis for $k+1$.
    \end{proof}
\end{prop}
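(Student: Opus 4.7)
My plan is to strengthen the assertion to a statement about the full operator product, which is the only form that survives an induction: for all $k\geq 1$ and $n\geq 0$, the expression
$$(-1)^n\, t^{2(k-1)n}\,[u^n]\,\GammaJ(u,v_1)\GammaJ(t^{-1}u,v_2)\cdots \GammaJ(t^{-(k-1)}u,v_k)\cdot 1$$
admits a positive expansion in the basis $\left(p_\nu\left[X\frac{1-t}{1-q}\right]\right)_{|\nu|=n}$, with coefficients that are polynomials in $-v_1,\dots,-v_k,q',\gamma$ with non-negative coefficients. This stronger claim yields the proposition: $\Mch_\mu$ is defined as the $(q,t)$-pairing of $p_\mu$ with the operator product at $u=1$, and since $\left\langle p_\mu, p_\nu\left[X\frac{1-t}{1-q}\right]\right\rangle_{q,t}=\delta_{\mu,\nu}z_\mu$, only the $\nu=\mu$ summand contributes, yielding positivity of $(-1)^{|\mu|}t^{2(k-1)|\mu|}\Mch_\mu$ up to the positive integer factor $z_\mu$.

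The base case $k=1$ is immediate from Conjecture~\ref{conj Gamma-p} applied with $\mu=\emptyset$, since $p_\emptyset\left[X\frac{1-t}{1-q}\right]=1$.

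For the inductive step, I first apply the induction hypothesis to the variables $(v_2,\dots,v_{k+1})$, and then perform the dilation $u\mapsto t^{-1}u$, which multiplies the $[u^n]$-coefficient by $t^{-n}$. The combined prefactor becomes $(-1)^n\, t^{(2k-1)n}$, so
$$(-1)^n\, t^{(2k-1)n}\,[u^n]\,\GammaJ(t^{-1}u,v_2)\cdots \GammaJ(t^{-k}u,v_{k+1})\cdot 1 = \sum_{|\rho|=n} a_\rho\, p_\rho\!\left[X\tfrac{1-t}{1-q}\right]$$
for positive polynomial coefficients $a_\rho$. I then act on the left by $[u^m]\GammaJ(u,v_1)$: Conjecture~\ref{conj Gamma-p} with $\mu=\rho$ gives that $(-1)^m\, t^{n}\,[u^m]\,\GammaJ(u,v_1)\cdot p_\rho\!\left[X\tfrac{1-t}{1-q}\right]$ expands positively in the $p_\xi\!\left[X\tfrac{1-t}{1-q}\right]$ basis with $|\xi|=n+m$. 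Multiplying by the positive coefficients $a_\rho$, summing over $\rho$, and then multiplying by the monomial $t^{2km}$ (which is positive in the $(q',\gamma)$-parametrization) yields positivity of $(-1)^{m+n}\, t^{2k(m+n)}\,[u^m]\GammaJ(u,v_1)\cdot[u^n]\GammaJ(t^{-1}u,v_2)\cdots\cdot 1$ for each $m,n\geq 0$. Summing over $m+n=N$ closes the induction at stage $k+1$.

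The main difficulty is the precise bookkeeping of the $t$-powers: the reindexing $u\mapsto t^{-1}u$ generates an extra $t^n$, the conjectural application of $\GammaJ(u,v_1)$ generates another $t^{|\rho|}=t^n$, and together with the inductive factor $t^{2(k-1)n}$ these combine to produce exactly $t^{2kn}$; the remaining $t^{2km}$ must be inserted by hand. No single step is deep in isolation, but the argument hinges on this exact cancellation, and on $t$ being a positive monomial in the parameters $q'$ and $\gamma$ under the parametrization described earlier in the paper.
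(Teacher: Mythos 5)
Your argument is correct and follows the paper's proof essentially step for step: the same strengthening to positivity of $(-1)^n t^{2(k-1)n}[u^n]\GammaJ(u,v_1)\cdots\GammaJ(t^{-(k-1)}u,v_k)\cdot 1$ in the $p_\nu[X\tfrac{1-t}{1-q}]$ basis, the same base case $k=1$, and the same inductive step via the dilation $u\mapsto t^{-1}u$, an application of \cref{conj Gamma-p} termwise, and multiplication by $t^{2km}$. Your bookkeeping of the $t$-powers (the $t^{n}$ from the dilation combining with the conjectural $t^{|\rho|}=t^{n}$ and the inductive $t^{2(k-1)n}$) is an accurate and somewhat more explicit rendering of what the paper does tersely.
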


\section{Macdonald versions for some Jack conjectures}\label{sec:generalized conjectures}

Jack polynomials are symmetric functions which depend on a deformation parameter $\alpha$. We briefly present here some of the most important results and conjectures related to Jack polynomials. We then introduce a new change of variables which allows us to generalize these conjectures to Macdonald polynomials.

The section is organized as follows.  
In \cref{sec Jack polynomials}, we recall the definition of Jack polynomials and we introduce a new parametrization of Macdonald polynomials which is directly related to Jack polynomials. We then discuss Macdonald generalizations of Stanley's and Lassalle's conjectures in \cref{sec stanley conj,sec Lassalle conjecture} respectively. The rest of the subsections are dedicated to discuss a generalization of Goulden--Jackson's Matchings-Jack and $b$-conjectures. In \cref{sec GJ conjectures} we state the generalized conjectures. We then discuss in \cref{ssec structure coefficients} the connection of the generalized Matching-Jack conjecture to the structure coefficients of Macdonald characters and we give a reformulation of this conjecture in \cref{sec Reformulation nabla} with the super nabla operator. We finally discuss some special cases of these conjectures in \cref{sec Special cases}.
 
\subsection{Jack polynomials and a new normalization of Macdonald polynomials}\label{sec Jack polynomials}
Jack polynomials can be obtained from the integral form of Macdonald polynomials as follows (see \cite[Chapter VI, eq (10.23)]{Macdonald1995})
\begin{equation}\label{eq Jack from Macdonald}
    \lim_{t\rightarrow 1}\frac{J_\lambda^{(q=1+\alpha(t-1),t)}}{(1-t)^{|\lambda|}}=\Jla.
\end{equation}
We denote the $\langle-,-\rangle_\alpha$, the scalar product defined on power-sum functions by
$$\langle p_\mu,p_\nu\rangle_\alpha=\delta_{\mu,\nu} z_\mu \alpha^{\ell(\mu)} .$$
Jack polynomials are orthogonal with respect to this scalar product. We denote by $j^{(\alpha)}_\lambda$ their squared norm, i.e.
$$\langle J^{(\alpha)}_\lambda,J_\mu^{(\alpha)}\rangle_\alpha=\delta_{\lambda,\mu}j^{(\alpha)}_\lambda.$$

\bigskip 

We consider the following normalization of Macdonald polynomials

$$\JJack_\lambda:=\frac{\J_\lambda}{(1-t)^{|\lambda|}}_{\big|q=1+\gamma\alpha,t=1+\gamma}.$$

In the following, the parameters $(\alpha,\gamma)$ will be always related to $(q,t)$ by
\begin{equation}\label{eq parametrization}
  \left\{\begin{array}{ll}
   q=1+\gamma\alpha  \\
   t=1+\gamma 
\end{array}\right. \longleftrightarrow 
\left\{\begin{array}{ll}
   \alpha=\frac{1-q}{1-t}  \\
   \gamma=t-1.
\end{array}\right.  
\end{equation}

Note that from \cref{eq Jack from Macdonald}, we get 
\begin{equation}\label{eq Macdonald-Jack}
  \mathfrak J^{(\alpha,\gamma=0)}_\lambda=J^{(\alpha)}_\lambda.  
\end{equation}

Unlike the functions $\J_\lambda$, the normalized functions $\JJack_\lambda$ are positive in the monomial basis.

\begin{prop}
    The coefficient of $\JJack_\lambda$ in the monomial basis are polynomial in $\alpha$ and $\gamma$ with non-negative integer coefficients.
    \begin{proof}
        This can be easily obtained from the combinatorial interpretation given in \cite[Proposition 8.1]{HaglundHaimanLoehr2005} for $\J_\lambda$.
    \end{proof}
\end{prop}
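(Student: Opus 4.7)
The plan is to invoke the combinatorial formula of Haglund--Haiman--Loehr \cite[Proposition~8.1]{HaglundHaimanLoehr2005}, which expresses the integral form Macdonald polynomial as
$$\J_\lambda = \sum_\sigma x^\sigma \prod_{\Box \in \lambda} B_{\lambda}(\Box, \sigma; q, t),$$
a sum over certain fillings $\sigma$ of the Young diagram of $\lambda$ by positive integers. Crucially, for each cell $\Box \in \lambda$ and each filling $\sigma$, the local weight $B_\lambda(\Box, \sigma; q, t)$ is of the form $1 - q^a t^b$ for non-negative integers $a, b$ with $(a,b) \ne (0,0)$ depending on $(\Box, \sigma)$; thus the total weight is a product of exactly $|\lambda|$ such factors.

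The rest of the proof rests on the elementary identity
$$1 - q^a t^b = -\bigl[(q^a-1)t^b + (t^b-1)\bigr] = -\gamma\bigl(\alpha\, [a]_q\, t^b + [b]_t\bigr), \qquad [n]_x := 1 + x + \cdots + x^{n-1}.$$
Under the substitution $q = 1 + \gamma\alpha$, $t = 1 + \gamma$, the power $t^b$ and the $q$- and $t$-integers $[a]_q$, $[b]_t$ are polynomials in $\alpha,\gamma$ with non-negative integer coefficients, so each local weight factors as $-\gamma$ times a polynomial in $\alpha,\gamma$ with non-negative integer coefficients. Taking the product over the $|\lambda|$ cells of $\lambda$ pulls out a factor $(-\gamma)^{|\lambda|} = (1-t)^{|\lambda|}$, leaving a polynomial in $\alpha,\gamma$ with non-negative integer coefficients. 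Summing over fillings $\sigma$ and gathering by monomials $x^\sigma$ yields the claim about $\JJack_\lambda = \J_\lambda / (1-t)^{|\lambda|}$.

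The only step that demands care is checking that the HHL weights contain exactly one factor of the shape $1 - q^a t^b$ with $(a,b)\ne(0,0)$ per cell, and that no auxiliary $q$- or $t$-monomial prefactors (which likewise expand with non-negative coefficients in $\alpha,\gamma$) disturb the sign bookkeeping. Both facts are immediate from the explicit form of the local weights in \cite[Proposition~8.1]{HaglundHaimanLoehr2005}, which is why the authors advertise the derivation as easy.
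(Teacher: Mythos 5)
Your proof is correct and follows exactly the route the paper indicates: invoke the Haglund--Haiman--Loehr combinatorial formula for $\J_\lambda$ from \cite[Proposition 8.1]{HaglundHaimanLoehr2005}, note that each cell contributes a factor of the form $1-q^at^b$ with $(a,b)\neq(0,0)$, and use the factorization $1-q^at^b=-\gamma\bigl(\alpha[a]_q\,t^b+[b]_t\bigr)$ to extract $(-\gamma)^{|\lambda|}=(1-t)^{|\lambda|}$, leaving a polynomial with non-negative integer coefficients in $\alpha,\gamma$. The paper simply compresses these verifications into the one-line remark that the claim is "easily obtained" from the HHL formula, whereas you have carried them out explicitly, including the observation that the monomial prefactors $q^{\mathrm{maj}}$ and $t^{\,\cdot}$ also expand non-negatively under the substitution $q=1+\gamma\alpha$, $t=1+\gamma$.
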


Our new $(\alpha,\gamma)$-reparametrization of Macdonald polynomials will allow us to formulate Macdonald generalizations of Stanley, Lassalle and Goulden--Jackson's conjectures. In the following, we recall these conjectures in the Jack case and we then state their Macdonald generalizations.

\subsection{Stanley's conjecture}\label{sec stanley conj}

\subsubsection{Jack case}
In his seminal work \cite{Stanley1989}, Stanley formulated the following positivity conjecture about the structure coefficients of Jack polynomials (see \cite[Conjecture 8.5]{Stanley1989}).

\begin{conj}\cite{Stanley1989}
    For arbitrary partitions $\lambda$, $\mu$ and $\nu$, 
     $$\langle J^{(\alpha)}_\lambda J^{(\alpha)}_\mu,J^{(\alpha)}_\nu\rangle_{\alpha}$$
is a polynomial in $\alpha$ with non-negative integer coefficients.
\end{conj}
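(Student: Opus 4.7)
The plan is to attempt a character-theoretic route to Stanley's conjecture, leveraging the recent combinatorial interpretation of Jack characters in \cite{BenDaliDolega2023}. First, I would expand both $J^{(\alpha)}_\lambda$ and $J^{(\alpha)}_\mu$ in the power-sum basis via Jack characters, multiply the expansions using $p_\pi p_\rho = p_{\pi \cup \rho}$, and contract against the power-sum expansion of $J^{(\alpha)}_\nu$. Using the orthogonality relation $\langle p_\sigma, p_\tau \rangle_\alpha = \delta_{\sigma,\tau} z_\sigma \alpha^{\ell(\sigma)}$, the structure coefficient $\langle J^{(\alpha)}_\lambda J^{(\alpha)}_\mu, J^{(\alpha)}_\nu\rangle_\alpha$ collapses to a finite sum indexed by a partition $\sigma$ of $|\lambda|+|\mu|$, weighted by products of three Jack characters $\theta^{(\alpha)}_\sigma(\lambda)\, \theta^{(\alpha)}_\tau(\mu)\, \theta^{(\alpha)}_\upsilon(\nu)$ with explicit rational coefficients in $\alpha$. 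The target is to show that this a priori rational sum equals a polynomial in $\alpha$ with non-negative integer coefficients.

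Second, I would substitute the topological expansion of each Jack character as a generating series of non-orientable maps weighted by $b = \alpha - 1$, following \cite{BenDaliDolega2023}. The resulting triple sum over maps should, ideally, admit a gluing procedure matching the three face-structures according to the common cycle type, producing a single combinatorial object (a map with three distinguished boundaries, or a pair of permutation factorizations in the spirit of the Matchings-Jack conjecture) whose generating series would be manifestly a polynomial in $\alpha$ with positive integer coefficients. This would amount to an $\alpha$-deformation of the classical Frobenius formula for multiplication constants in the centre of the group algebra.

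The hard part is precisely where Stanley's conjecture has resisted progress since 1989: no cancellation-free combinatorial model is known for the structure constants, even beyond the Littlewood--Richardson case $\alpha=1$ and the zonal case $\alpha=2$. The character route introduces heavy sign cancellation, and the map expansions of \cite{BenDaliDolega2023} apply only inside a single character, so combining three of them does not obviously decouple into a positive sum. A more promising angle within the framework of the present paper is to first prove the Macdonald lift of Stanley's conjecture using the operator $\GammaJ$ and the creation formula of \cref{thm creationJqt}, then to deduce Stanley by specialization $\gamma \to 0$ via \cref{eq Macdonald-Jack}. This trades one hard problem for another, but it unlocks the five-term relation, the plethystic operators $\Pop_Z$ and $\Top_Z$, and the nabla/delta machinery, none of which have direct analogues in the pure Jack setting, and it places the positivity question in a two-parameter context where extra rigidity from the $(q,t)$-structure may force the cancellations to resolve.
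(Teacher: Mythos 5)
The statement you were asked to prove is Stanley's 1989 conjecture, which remains open. The paper does not prove it; it is restated as background and explicitly described in the surrounding text as ``wide open.'' The paper's contribution on this point is merely to propose a two-parameter Macdonald generalization (\cref{conj Stanley-Macdonald}) and observe that Stanley's conjecture is the $\gamma=0$ specialization via \cref{eq Macdonald-Jack}. So there is no proof in the paper against which to compare yours.

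Your write-up is honest about this: the first two paragraphs outline a character-theoretic plan and then correctly identify why it fails as a proof (the map expansion of \cite{BenDaliDolega2023} lives inside a single Jack character and does not decouple across a product of three, and the intermediate sums carry uncontrolled signs). The final paragraph --- first prove the Macdonald lift via the $\GammaJ$ machinery and creation formula, then specialize $\gamma\to 0$ --- is precisely the strategic framing the paper advocates, but it is a research program, not a proof, and the paper presents it as such. To be clear about where the gap lies: there is currently no argument, in this paper or elsewhere, establishing positivity of $\langle J^{(\alpha)}_\lambda J^{(\alpha)}_\mu, J^{(\alpha)}_\nu\rangle_\alpha$ for general $\lambda,\mu,\nu$; your proposal does not close that gap and does not claim to. If this item were graded as a proof it would be incomplete, but as a reading of the paper's intent it is accurate.
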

This conjecture is wide open, and an analog for Shifted Jack polynomials has been proposed in \cite{AlexanderssonFeray19}.
\subsubsection{Macdonald generalization}
\begin{conj}[Macdonald version of Stanley's conjecture]\label{conj Stanley-Macdonald}
    For any partitions $\lambda, \mu,\nu$ the quantity
    $$\langle\JJack_\lambda\JJack_\mu,\JJack_\nu\rangle_{q,t}$$
    is a polynomial in the parameters $\alpha$ and $\gamma$ with non-negative integer coefficients.
\end{conj}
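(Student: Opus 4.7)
The plan is to decompose the claim into a polynomiality statement and a positivity statement, and address them separately.

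For polynomiality, I would first rewrite the scalar product under the $(\alpha,\gamma)$ parametrization. Since $\JJack_\lambda=(1-t)^{-|\lambda|}\J_\lambda$ at $q=1+\gamma\alpha$, $t=1+\gamma$, and since $\langle p_\mu,p_\nu\rangle_{q,t}=\delta_{\mu,\nu}z_\mu p_\mu[(1-q)/(1-t)]$ with $p_k[(1-q)/(1-t)]=(1-q^k)/(1-t^k)$, one checks that $p_k[(1-q)/(1-t)]$ is a polynomial in $\alpha,\gamma$ (the factors $1-t$ cancel). Using Cauchy's identity and the integrality of Macdonald $(q,t)$-coefficients of $\J_\lambda$, one can express the structure constant as a finite sum over partitions $\pi$ of $|\lambda|+|\mu|$ of the form $\sum_\pi c^\lambda_\pi c^\mu_\pi c^\nu_\pi \cdot z_\pi(q,t)^{-1}\cdot (1-t)^{-|\lambda|-|\mu|-|\nu|}$, where the $c^\bullet_\pi$ come from the power-sum expansion of $\JJack$; a careful valuation count at $t=1$ should show that the total power of $(1-t)$ in the denominator cancels, leaving a polynomial in $\alpha,\gamma$. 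The Jack limit $\gamma\to 0$ reduces this to Stanley's polynomiality in $\alpha$, which is classical and serves as a sanity check.

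For the non-negativity, the natural route is through the Macdonald character theory built in \cref{sec characters}. The structure coefficients $\bfg^\pi_{\mu,\nu}$ of the basis $(\Mch_\mu)$ are conjectured to be polynomials in $\alpha,\gamma$ with non-negative rational coefficients (the generalized Matchings-Jack conjecture, \cref{conj structure coeff}), and by \cref{eq J characters} the $\Mch_\mu$ encode the power-sum expansion of $\J_\lambda$ up to the monomial factor $(-1)^{|\lambda|}q^{n(\lambda')}t^{-2n(\lambda)}$. I would try to upgrade this into a change-of-basis argument that expresses $\langle\JJack_\lambda\JJack_\mu,\JJack_\nu\rangle_{q,t}$ as a non-negative combination of the $\bfg^\pi_{\mu,\nu}$ (or of the $\bfc^\pi_{\mu,\nu}$ of \cref{eq def c}) with coefficients that are themselves manifestly non-negative under the $(\alpha,\gamma)$-parametrization. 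A secondary strategy is to look for a combinatorial model: if a topological expansion of $\JJack_\lambda$ in terms of weighted maps on non-orientable surfaces can be established, extending \cite{BenDaliDolega2023} by using $\gamma$ as an extra face/vertex weight, then the triple scalar product should enumerate gluings of three such maps with positive weights in $\alpha,\gamma$.

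The main obstacle is that the $\gamma=0$ specialization is exactly Stanley's 1989 conjecture, which is still wide open after thirty-five years. Thus any full proof of \cref{conj Stanley-Macdonald} necessarily subsumes Stanley. Realistically, I would pursue the polynomiality part in full generality, and for positivity restrict to cases where Jack positivity is known, namely: (i) $\mu$ or $\nu$ a one-row/one-column partition, reducing to a Pieri-type computation where the relevant coefficients from \cref{thm Pieri rule} can be written down and their positivity in $\alpha,\gamma$ checked directly; (ii) $\alpha=1$, recovering a $(q,t)$-deformation of Littlewood--Richardson coefficients, where Haglund--Haiman--Loehr combinatorics should apply. Extending beyond these cases would require the same genuinely new input (a combinatorial model for Jack structure constants) that Stanley's conjecture itself demands, and so I would view \cref{conj Stanley-Macdonald} as a strictly harder problem that motivates searching for such a model in the $(q,t)$-setting rather than trying to reduce to the unsolved Jack case.
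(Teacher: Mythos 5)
The statement you were asked to prove is a conjecture, and the paper offers no proof of it: the text following \cref{conj Stanley-Macdonald} merely records that the conjecture has been verified by computer for $|\nu|\leq 9$ and that the specialization $\gamma=0$ recovers Stanley's 1989 conjecture. You correctly recognize this, and your central observation --- that any proof of \cref{conj Stanley-Macdonald} would subsume Stanley's conjecture at $\gamma=0$ and is therefore at least as hard as a long-standing open problem --- is exactly right, as is your conclusion that only partial results can realistically be expected. Your honest framing of the positivity part (reduction to the conjectural structure constants $\bfg^\pi_{\mu,\nu}$ of \cref{conj structure coeff}, or to a hypothetical $(\alpha,\gamma)$-weighted map model extending \cite{BenDaliDolega2023}) is consistent with the viewpoint the paper itself advocates, though it would not constitute a proof since those ingredients are themselves conjectural.

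One technical inaccuracy worth flagging in the polynomiality sketch: the displayed expression $\sum_\pi c^\lambda_\pi c^\mu_\pi c^\nu_\pi\, z_\pi(q,t)^{-1}(1-t)^{-|\lambda|-|\mu|-|\nu|}$ cannot be right as written. For the pairing to be nonzero one needs $|\nu|=|\lambda|+|\mu|$, but then $c^\lambda_\pi$ and $c^\mu_\pi$ with $\pi\vdash|\nu|$ would both vanish (the power-sum expansion of $\J_\lambda$ is supported on partitions of $|\lambda|$). Expanding $\J_\lambda\J_\mu$ in power sums produces a sum over pairs $(\rho,\sigma)$ with $\rho\vdash|\lambda|$, $\sigma\vdash|\mu|$, pairing $p_{\rho\cup\sigma}$ against $\J_\nu$, so the correct expression is a sum over such pairs weighted by $z_{\rho\cup\sigma}(q,t)$. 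Moreover the claimed cancellation of $(1-t)$-powers is not a simple valuation count: with $q=1+\gamma\alpha$, $t=1+\gamma$ one has $(1-t^k)/(1-q^k)=\alpha^{-1}[k]_t/[k]_q$, which contributes negative powers of $\alpha$ rather than of $1-t$, so the bookkeeping needs to be redone. Even the bare polynomiality of the triple scalar product in $(\alpha,\gamma)$ is not established in the paper and appears to require a genuine argument.
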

This conjecture has been tested for $|\nu|\leq 9$. Stanley's conjecture corresponds to the case $\gamma=0$ of \cref{conj Stanley-Macdonald}. 

\subsection{Lassalle's conjecture}\label{sec Lassalle conjecture}
\subsubsection{Jack case}
Jack characters have been introduced by Lassalle \cite{Lassalle2008b} as a one parameter deformation of the characters of the symmetric group.
\begin{defi}[Jack characters]Fix a partition $\mu$. The Jack character $\theta^{(\alpha)}_\mu$ is the function on Young diagrams $\lambda$ defined by 
    $$ \theta^{(\alpha)}_\mu(\lambda):=\left\{
    \begin{array}{ll}
    \left[p_\mu\right]\frac{1}{(|\lambda|-|\mu|)!}(p_1^\perp)^{|\lambda|-|\mu|}J^{(\alpha)}_\lambda &  \mbox{ if $|\mu|\leq|\lambda|$}\\
       0 & \mbox{ if } |\lambda|<|\mu|.
        \end{array}
\right.$$
\end{defi}

Lassalle's conjecture, formulated in \cite{Lassalle2008b} and proved in \cite{BenDaliDolega2023}, suggests that the character $\Jch_\mu(\lambda)$ is a positive polynomial in $b:=\alpha-1$, and some coordinates of $\lambda$ called \textit{multirectangular coordinates}. We are here interested in a weak version of this result, which we generalize to the Macdonald case.
\begin{thm}[\cite{BenDaliDolega2023}]
Fix a partition $\mu$. The normalized Jack character $(-1)^{|\mu|}z_\mu\Jch_\mu(\lambda)$  is a polynomial in the variables $b:=\alpha-1,-\alpha \lambda_1,-\alpha \lambda_2\dots$ with non-negative integer coefficients.  
\end{thm}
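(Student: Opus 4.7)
The plan is to specialize the creation formula of \cref{thm creationJqt} at $\gamma=0$ in order to obtain a creation formula for Jack polynomials, and then to interpret the resulting expression for $\Jch_\mu(\lambda)$ combinatorially so that positivity becomes manifest. Under the parametrization \eqref{eq parametrization}, taking $\gamma \to 0$ recovers Jack polynomials from $\JJack_\lambda$ by \eqref{eq Macdonald-Jack}; applied to \cref{thm creationJqt}, this yields Jack creation operators $\boldsymbol{\Gamma}^{J,\alpha}_m$ satisfying $\boldsymbol{\Gamma}^{J,\alpha}_{\lambda_1}\cdots\boldsymbol{\Gamma}^{J,\alpha}_{\lambda_k}\cdot 1 = J^{(\alpha)}_\lambda$. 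Dualizing against $p_\mu$ via the Jack specialization of \cref{eq characters J} then produces an explicit operator-theoretic formula for $\Jch_\mu(\lambda)$.

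First I would unfold the iterated action of the Jack creation operator via the Jack Pieri rule, expressing $\Jch_\mu(\lambda)$ as a sum indexed by sequences of horizontal strips. Each term involves products of Jack Pieri coefficients together with contributions coming from the $\DeltaJ$-type factor of $\GammaJ$, which encodes the parts $\lambda_i$ through products of the form $\prod_\Box(v - q^{a'(\Box)}t^{\ell'(\Box)})$. In the limit $\gamma \to 0$ one has $q^{\lambda_i} = 1 + \gamma\alpha\lambda_i + O(\gamma^2)$, so after the $(1-t)^{|\lambda|}$ normalization built into $\JJack_\lambda$, these factors naturally produce polynomial dependence on $-\alpha\lambda_i$. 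Collecting the outputs establishes polynomiality of $(-1)^{|\mu|}z_\mu\Jch_\mu(\lambda)$ in $b = \alpha - 1$ and the $-\alpha\lambda_i$, without yet controlling signs.

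The main obstacle is the positivity claim. Algebraic manipulation alone yields \emph{some} polynomial with coefficients in $\mathbb{Z}$, but there is no obvious reason for the coefficients to be non-negative. To obtain positivity, one must identify the individual Pieri contributions in the expansion with a cancellation-free enumeration of maps on non-orientable surfaces, with the exponent of $b$ tracking a measure of non-orientability and the exponents of $-\alpha\lambda_i$ tracking face degrees. Constructing this bijection, and verifying that the weights coming out of the Pieri rule match exactly the combinatorial weights of the maps, is the technical heart of \cite{BenDaliDolega2023} and the step I would not expect to carry out by elementary algebraic means.

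A softer fallback strategy would be to first use the shifted symmetric framework of \cref{sec characters} and \cref{thm Feray} to reduce to determining $\Jch_\mu$ on a generating family of Young diagrams (for example, rectangles or unions of rectangles), and then to try to prove positivity there by direct formulas and induction on $|\mu|$. This would sidestep the full map combinatorics but, absent the combinatorial model, I expect even this reduction to get stuck once $\mu$ has several parts, since the intermediate evaluations already involve substantial cancellation.
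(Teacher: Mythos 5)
This statement is cited from \cite{BenDaliDolega2023}; the present paper records it as known and gives no proof of its own, so there is no internal argument to compare against. That said, your sketch correctly identifies both the overall architecture and the obstruction.

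Your first step — specializing the creation formula at $\gamma=0$ to obtain a Jack creation operator, and then unfolding the iterated action via the Pieri rule to get a sum over sequences of horizontal strips — is essentially how the $\alpha$-creation formula is used in \cite{BenDaliDolega2023}. (One small historical inversion: the Jack-level creation formula is the one proved there, and \cref{thm creationJqt} here is its $(q,t)$-lift; logically they are equivalent, but the direction of inspiration runs the other way.) The Pieri expansion does give polynomiality and integrality in $b$ and the $-\alpha\lambda_i$ without too much trouble, as you say. You are also right that this expansion has substantial internal cancellation, so the sign pattern is not visible term-by-term.

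The gap you flag is real, and it is exactly the one that cannot be filled by elementary manipulation: showing that $(-1)^{|\mu|}z_\mu\Jch_\mu(\lambda)$ has \emph{non-negative} coefficients requires exhibiting a cancellation-free combinatorial model — the weighted maps on non-orientable surfaces, with a non-orientability statistic governing the powers of $b$ — and matching its weights against the Pieri coefficients. Your fallback via \cref{thm Feray} and evaluation on (multi)rectangular shapes is also one of the tools used in that setting (multirectangular coordinates are central there), but as you anticipate it does not by itself break the cancellations once $\mu$ has several parts. So your proposal is an accurate description of what a proof must contain, and an honest acknowledgment that the positivity step is out of reach without the map bijection; it is not, and does not claim to be, a self-contained proof.
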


\subsubsection{Macdonald generalization}

We start by introducing a normalization of Macdonald characters which is directly related to Jack characters.
\begin{equation}\label{eq def theta alpha u}
  \MchJack_\mu(s_1,s_2,\dots):=\frac{1}{\gamma^{|\mu|}z_\mu(q,t)}\Mch_\mu(1+\alpha \gamma s_1,1+\alpha \gamma s_2,\dots).  
\end{equation}
Note that $\MchJack_\mu$ is symmetric in the variables $t^{-i}s_i+\frac{t^{-i}}{\alpha \gamma}$.
For any partition $\lambda$, we denote 
\begin{equation}\label{eq def theta alpha gamma}
  \MchJack_\mu(\lambda):=\MchJack_\mu\left(\frac{q^{\lambda_1}-1}{q-1},\frac{q^{\lambda_2}-1}{q-1},\dots\right)=\frac{\Mch_\mu(\lambda)}{\gamma^{|\mu|}z_\mu(q,t)}.  
\end{equation}
Hence,
\begin{equation}\label{eq charactersJack J}
  \MchJack_\mu(\lambda)=\left\{
\begin{array}{cc}
      [p_\mu](-1)^{|\mu|} \nablaJ (1-t)^{|\lambda|-|\mu|} h^\perp_{|\lambda|-|\mu|}\left[\frac{X}{1-t}\right]\cdot t^{-n(\lambda)}\JJack_\lambda& \text{ if } |\mu|\leq |\lambda| \\
    0  & \text{ otherwise.}
\end{array}\right.  
\end{equation}

In particular, when $|\lambda|=|\mu|$ the characters  $\MchJack_\mu(\lambda)$ are given by the expansion
\begin{equation}\label{eq JJack characters}
q^{n(\lambda')}t^{-2n(\lambda)}\JJack_\lambda=\sum_{\mu\vdash |\lambda|}\MchJack_\mu(\lambda) p_\mu.  
\end{equation}

The Jack normalization of Macdonald characters are related to Jack characters by the following proposition.
\begin{prop}\label{prop Jack characters}
For any partitions $\mu$ and $\lambda$, we have
    $$\lim_{\gamma\mapsto 0}\MchJack_\mu(\lambda)=\theta^{(\alpha)}_\mu(\lambda).$$
    \begin{proof}
Since $t\rightarrow1$ as $\gamma\mapsto0$, and the operator $\nablaJ$ acts on a homogeneous functions of degree $n$ as a multiplication by $(-1)^n$. Moreover,
$$\lim_{t\rightarrow 1}h_r^\perp\left[\frac{X}{1-t}\right](1-t)^{r}=\lim_{t\rightarrow 1}(1-t)^{r}\sum_{\nu\vdash r}\prod_{i\in\nu}\frac{p_{\nu_i}^\perp}{(1-t^{\nu_i})z_\nu}=\frac{(p_1^\perp)^r}{r!}.$$
\cref{eq charactersJack J} allows to conclude.\qedhere
\end{proof}
\end{prop}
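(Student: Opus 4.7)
The plan is to start from the explicit formula \eqref{eq charactersJack J} for $\MchJack_\mu(\lambda)$ and read off the limit term by term, using that $\gamma\to 0$ forces $t\to 1$ and $q=1+\gamma\alpha\to 1$ with $\alpha=(1-q)/(1-t)$ held fixed. For $|\mu|>|\lambda|$ both sides of the claimed identity vanish by definition, so we may assume $|\mu|\leq|\lambda|$ and set $r:=|\lambda|-|\mu|$. The prefactor $t^{-n(\lambda)}$ tends to $1$, and by the defining relation \eqref{eq Jack from Macdonald} we have $\JJack_\lambda\to J^{(\alpha)}_\lambda$. Looking at \eqref{eq nablaJ}, the operator $\nablaJ$ acts on the Macdonald basis element $\J_\xi$ by the eigenvalue $(-1)^{|\xi|}\prod_{\Box\in\xi}q^{a'(\Box)}t^{-\ell'(\Box)}$, whose product factor tends to $1$ as $q,t\to 1$; thus on each fixed homogeneous component $\nablaJ$ tends to multiplication by $(-1)^{\deg}$. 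Since after $h_r^\perp[X/(1-t)]$ is applied the function lives in the $|\mu|$-th homogeneous component, the combined factor $(-1)^{|\mu|}\nablaJ$ tends to the identity.

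The one computational step is the identity
\[
\lim_{t\to 1}(1-t)^{r}\,h_r^\perp\!\left[\tfrac{X}{1-t}\right]=\frac{(p_1^\perp)^r}{r!},
\]
which I would prove by expanding $h_r^\perp=\sum_{\nu\vdash r}z_\nu^{-1}\prod_i p_{\nu_i}^\perp$, applying the plethystic substitution $p_k^\perp[X/(1-t)]=p_k^\perp/(1-t^k)$, and distributing the prefactor $(1-t)^r=\prod_i(1-t)^{\nu_i}$ among the factors. Each factor $(1-t)^{\nu_i}/(1-t^{\nu_i})$ equals $(1-t)^{\nu_i-1}/(1+t+\dots+t^{\nu_i-1})$, which tends to $1$ when $\nu_i=1$ and to $0$ otherwise, so only the partition $\nu=(1^r)$ survives in the limit, yielding $(p_1^\perp)^r/z_{(1^r)}=(p_1^\perp)^r/r!$.

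Combining these three limits inside the bracket $[p_\mu]$ gives
\[
\lim_{\gamma\to 0}\MchJack_\mu(\lambda)=[p_\mu]\,\frac{(p_1^\perp)^{r}}{r!}\,J^{(\alpha)}_\lambda=\Jch_\mu(\lambda),
\]
which is the desired identity. No serious obstacle is anticipated; the only care required is the bookkeeping of homogeneous degrees, so that each operator limit is read off on the correct graded piece, which is automatic once one tracks the total degree of $h_r^\perp[X/(1-t)]\JJack_\lambda$.
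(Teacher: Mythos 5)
Your proof is correct and follows essentially the same route as the paper: start from \cref{eq charactersJack J}, observe that $\nablaJ$ tends to multiplication by $(-1)^{\deg}$ as $q,t\to 1$ (cancelling the $(-1)^{|\mu|}$ prefactor), and establish $\lim_{t\to1}(1-t)^r h_r^\perp[X/(1-t)]=(p_1^\perp)^r/r!$ by expanding $h_r$ in the power-sum basis. You supply a bit more detail on degree bookkeeping and the trivial case $|\mu|>|\lambda|$, but the argument is the same.
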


\begin{rmq}
    Actually, $\boldsymbol{\theta}_\mu^{(\alpha,\gamma=0)}$ coincides also as a polynomial in the variables $(s_i)$ with the Jack character $\theta_\mu^{(\alpha)}$. This can be shown using the previous proposition and the fact that  $\lim_{\gamma \rightarrow 0}\MchJack_\mu(s_1,s_2,\dots)$ is symmetric in the variables $s_i-i/\alpha$.
\end{rmq}

These characters seem to satisfy the following conjecture, tested for $k\leq 3$ and $|\mu|\leq 7$.

\begin{conj}\label{conj lassale macdonald}
Fix $k\geq 1$ and $\mu\in \mathbb{Y}$. Then,
    $(-1)^{|\mu|}t^{(k-1)|\mu|}z_\mu(q,t)\MchJack_\mu(s_1,s_2,\dots, s_k)$ is a polynomial in $\gamma,b:=\alpha-1, -\alpha s_1, -\alpha s_2\dots, -\alpha s_k$ with non-negative integer coefficients.
\end{conj}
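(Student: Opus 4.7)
The plan is to derive the statement from the more fundamental positivity \cref{conj Gamma-p} about the operator $\GammaJ$, followed by a careful change of variables via \eqref{eq parametrization}. The proposition following \cref{conj Gamma-p} already shows that \cref{conj Gamma-p} yields positivity of $(-1)^{|\mu|}t^{2(k-1)|\mu|}\Mch_\mu(v_1,\dots,v_k)$ as a polynomial in $-v_1,\dots,-v_k,q',\gamma$ with nonnegative integer coefficients. The stated conjecture is, in essence, the same positivity reformulated in the Jack-like variables of \eqref{eq parametrization} after the substitution $v_i = 1+\alpha\gamma s_i$ and absorption of the normalization factor $1/(\gamma^{|\mu|}z_\mu(q,t))$ from \eqref{eq def theta alpha u}.

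First I would aim to establish \cref{conj Gamma-p} itself. By \eqref{eq Gamma formula}, transferred via the plethystic isomorphism $\phi$ of \eqref{eq OJ}, the operator $\GammaJ(z,v)\cdot \J_\lambda$ expands as a sum over horizontal strips $\xi/\lambda$ with coefficients built from Pieri factors and the products $\prod_{\Box\in\xi/\lambda}(v - q^{a'(\Box)}t^{\ell'(\Box)})$. After passing to the basis $\{p_\mu[X(1-t)/(1-q)]\}$ and pairing against its dual, the positivity claim reduces to a combinatorial positivity statement for the resulting transition coefficients in $q'$ and $\gamma$, with $-v$ and $-z$ supplied by the horizontal-strip factors. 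An inductive attack via the Pieri rule, combined with \cref{thm creation modified 2} to handle iterated $\GammaJ$'s, seems promising, but delicate bookkeeping of the $q$- and $t$-dependence is required to isolate the correct monomial positivity.

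Second, assuming \cref{conj Gamma-p}, I would substitute $v_i = 1+\alpha\gamma s_i$ and rewrite $-v_i = -1 + \gamma\cdot(-\alpha s_i)$ together with the Jack-type reparametrization of $q'$ in terms of $b=\alpha-1$ and $\gamma$. The appearance of the constant $-1$ in the expansion of $-v_i$ means the reduction is not entirely mechanical: cancellations between the various monomials, orchestrated by the factor $\gamma^{|\mu|}z_\mu(q,t)$ in the denominator and the sharper weight $t^{(k-1)|\mu|}$ (rather than $t^{2(k-1)|\mu|}$ appearing in the proposition), must conspire to produce nonnegative integer coefficients in $(\gamma,b,-\alpha s_i)$. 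The improved $t$-exponent should come from a sharpening of the inductive step that exploits the closed form \eqref{eq GammaJ operator} rather than the cruder bound used in the proposition; integer-positivity of the final coefficients would then follow from a direct monomial expansion.

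The main obstacle is \cref{conj Gamma-p} itself, a deep positivity statement sitting at the combinatorial core of the Macdonald creation formula. In the Jack limit $\gamma \to 0$, the stated conjecture specializes to Lassalle's positivity of Jack characters in multirectangular coordinates, proved in \cite{BenDaliDolega2023} via a bijective interpretation in terms of weighted maps on non-orientable surfaces. No $(q,t)$-weighted map model is currently known, so extracting the additional $\gamma$-positivity will likely demand either a refinement of this map combinatorics or a genuinely new approach — for instance through shifted Macdonald polynomials, the super nabla operator of \cite{BergeronHaglundIraciRomero2023}, or the representation-theoretic tools (such as the double affine Hecke algebra) that are available in the Macdonald setting but have no direct Jack analogue.
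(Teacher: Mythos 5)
The statement you were asked to prove is a \emph{conjecture}, and the paper does not prove it: the only support offered in the paper is that it has been verified computationally for $k\leq 3$ and $|\mu|\leq 7$. Your proposal correctly recognizes this and correctly locates the closest material in the paper, namely \cref{conj Gamma-p} and the proposition that follows it, which shows \cref{conj Gamma-p} implies nonnegativity of $(-1)^{|\mu|}t^{2(k-1)|\mu|}\Mch_\mu(v_1,\dots,v_k)$ in $(-v_i,q',\gamma)$. But the paper only claims that this proposition is ``closely related'' to \cref{conj lassale macdonald}; it does not claim, and you do not demonstrate, that it implies it.

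There are two genuine gaps, both of which you partly acknowledge. First, \cref{conj Gamma-p} is itself open, so everything downstream of it remains conjectural. Second, even granting \cref{conj Gamma-p}, the reduction to \cref{conj lassale macdonald} is not established: the substitution $v_i = 1+\alpha\gamma s_i$ makes $-v_i = -1 - \alpha\gamma s_i$, which is not manifestly a nonnegative combination in $(\gamma, b, -\alpha s_i)$ because of the stray $-1$; one must then divide by $\gamma^{|\mu|}z_\mu(q,t)$ and, crucially, pass from the exponent $t^{2(k-1)|\mu|}$ to the strictly weaker $t^{(k-1)|\mu|}$, and each of these operations can destroy positivity of the coefficients. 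Your appeal to ``a sharpening of the inductive step'' exploiting \eqref{eq GammaJ operator} and to ``direct monomial expansion'' names where a proof would have to live but supplies no argument that the required cancellations actually occur. The honest assessment is that neither the paper nor your proposal contains a proof of this statement; what you offer is a plausible (and well-aimed) research program contingent on another open conjecture, not a verification.
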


\subsection{Goulden and Jackson's conjectures}\label{sec GJ conjectures}

\subsubsection{Jack case}
We start by recalling the Matching-Jack and $b$-conjectures formulated by Goulden and Jackson in \cite{GouldenJackson1996} for Jack polynomials. Let $Y:=y_1+y_2+\cdots $ and $Z:=z_1+z_2+\cdots$ be two additional alphabets of variables.
We consider the two families of coefficients $c^\lambda_{\mu,\nu}(\alpha)$ and $h^\lambda_{\mu,\nu}(\alpha)$ indexed by partitions of the same size and defined by 
\begin{align*}
    \sum_{\lambda\in\mathbb Y} u^{|\lambda|} \frac{\Jla[X]\Jla[Y]\Jla[Z]}{j^{(\alpha)}_\lambda}=\sum_{m\geq 0}\sum_{\pi,\mu,\nu\vdash m} \frac{u^m c^\pi_{\mu,\nu}(\alpha)}{z_\pi \alpha^{\ell(\pi)}}p_\pi[X]p_\mu[Y]p_\nu[Z],
\end{align*}

\begin{align*}
  \log\left(\sum_{\lambda\in\mathbb Y} u^{|\lambda|} \frac{\Jla[X]\Jla[Y]\Jla[Z]}{j^{(\alpha)}_\lambda}\right) 
  =\sum_{m\geq 0}\sum_{\pi,\mu,\nu\vdash m} \frac{u^m h^\pi_{\mu,\nu}(\alpha)}{\alpha m}p_\pi[X]p_\mu[Y]p_\nu[Z].   
\end{align*}

For $\alpha=1$ (resp. $\alpha=2$), the series above are known to count bipartite graphs on orientable surfaces (resp. surfaces orientable or not) called \textit{maps}, see \cite{GouldenJackson1996a}. Goulden and Jackson have formulated the following two conjectures; see \cite[Conjecture 3.5]{GouldenJackson1996} and \cite[Conjecture 6.2]{GouldenJackson1996}.

\begin{conj}[ Matchings-Jack conjecture \cite{GouldenJackson1996}]\label{conj MJ}
    The coefficients $c^\pi_{\mu,\nu}$ are polynomials in the parameter $b:=\alpha-1$ with non-negative integer coefficients.
\end{conj}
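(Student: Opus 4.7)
Since this is a famously open conjecture, what I can offer is a strategy and a diagnosis of where it breaks down, not a complete proof. The first step I would take is to rewrite $c^\pi_{\mu,\nu}(\alpha)$ in terms of Jack characters. Applying $\langle p_\pi[X]p_\mu[Y]p_\nu[Z],-\rangle$ (with suitable normalization) to the generating series that defines these coefficients and using that the power-sum coefficients of $\Jla$ are, up to normalization, the values $\Jch_\sigma(\lambda)$, one obtains a formula of the shape
$$c^\pi_{\mu,\nu}(\alpha) \;=\; z_\pi \alpha^{\ell(\pi)} \sum_{\lambda \vdash |\pi|} \frac{\Jch_\pi(\lambda)\,\Jch_\mu(\lambda)\,\Jch_\nu(\lambda)}{j^{(\alpha)}_\lambda},$$
so that the conjecture reduces to a positivity/integrality statement about this triple-character sum.

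Next I would use the Ben Dali--Dolega theorem, which asserts that $(-1)^{|\sigma|}z_\sigma \Jch_\sigma(\lambda)$ has a positive polynomial expansion in $b=\alpha-1$ and the multirectangular coordinates of $\lambda$, realized combinatorially via non-orientable constellations. The hope is that multiplying three such expansions and weighting them by $1/j^{(\alpha)}_\lambda$ produces a combinatorial model counting triples of ``compatible'' non-orientable maps on a common underlying surface, in analogy with the known cases $\alpha=1$ (orientable bipartite maps/matchings) and $\alpha=2$ (all bipartite maps/matchings, cf.\ Goulden--Jackson \cite{GouldenJackson1996a}).

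The main obstacle is precisely the summation over $\lambda$. Even with manifestly positive per-character formulas, the factor $1/j^{(\alpha)}_\lambda$ is not polynomial in $b$, and without a canonical bijection turning the quadruple (three characters $\times$ $1/j^{(\alpha)}_\lambda$) into a single map object, massive cancellations have to be tracked. All partial results in the literature (Chapuy--Dolega, Dolega--F\'eray, Ben Dali) essentially work because they manage to bypass this summation for special shapes, top-degree asymptotics, or by imposing a topological filtration. I do not see how to remove this obstruction using Jack tools alone.

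What the present paper offers is two potentially new levers to apply. First, the creation formula $\Jla = \GammaplusJ{\lambda_1}\cdots \GammaplusJ{\lambda_\ell}\cdot 1$ (specialized via $\gamma=0$ from \cref{thm creationJqt}) constructs $\Jla$ row-by-row, giving a recursive, rather than summatory, description of the triple generating series; this suggests attempting an inductive construction of the conjectured maps one row at a time, with the $b$-weight tracked as each row is added. Second, one could attack the richer Macdonald generalization of the Matchings-Jack conjecture that the authors will formulate in \cref{sec GJ conjectures}: the extra parameter $\gamma$ might rigidify the combinatorics enough that a bijection becomes forced, after which specialization to $\gamma=0$ would recover the Jack statement. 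My best guess is that neither lever alone is enough, and that a proof will need to combine the creation formula with a genuinely new combinatorial model of Macdonald characters.
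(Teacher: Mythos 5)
The statement labelled \cref{conj MJ} is not proved in the paper: it is the Matchings--Jack conjecture of Goulden and Jackson, which the paper merely recalls (as an open problem, citing \cite{GouldenJackson1996}) before formulating a Macdonald generalization of it in \cref{conj MJ generalized}. So there is no proof in the paper to compare your attempt against, and you correctly identified this from the start. What you have written is therefore best read as a strategy memo rather than a proof, and on that reading it is accurate.

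Your reduction is correct as far as it goes. Extracting coefficients from the defining series and using $\Jch_\sigma(\lambda)=[p_\sigma]\Jla$ for $|\sigma|=|\lambda|$ gives exactly
\begin{equation*}
c^\pi_{\mu,\nu}(\alpha)=z_\pi\,\alpha^{\ell(\pi)}\sum_{\lambda\vdash|\pi|}\frac{\Jch_\pi(\lambda)\Jch_\mu(\lambda)\Jch_\nu(\lambda)}{j^{(\alpha)}_\lambda},
\end{equation*}
which is the Jack analogue of the argument the paper carries out for the Macdonald coefficients $\bfc^\pi_{\mu,\nu}$ (compare the proof of \cref{prop g c}, where the same inversion between the character formula and the Cauchy kernel is used). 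Your diagnosis of the obstruction is also the right one: the Ben~Dali--Dol\k{e}ga positivity of normalized Jack characters is a per-$\lambda$ statement, while the conjecture demands control of the full sum weighted by the non-polynomial factor $1/j^{(\alpha)}_\lambda$, and nothing in the current literature collapses that sum into a single combinatorial object except at $\alpha=1,2$. The two ``levers'' you propose are exactly the philosophy the authors are advocating: either exploit the row-by-row creation formula of \cref{thm creationJqt} (Jack case via $\gamma=0$), or prove the stronger Macdonald conjecture \cref{conj MJ generalized} directly and specialize --- the paper explicitly observes (in \cref{prop g c} and the discussion preceding it) that the $\bfc^\pi_{\mu,\nu}$ are structure constants of Macdonald characters, which is a rigidity of precisely the kind you are hoping for. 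In short: no gap to report, because there is no proof to reproduce, and your account of the state of the problem is faithful to the paper.
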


\begin{conj}[\texorpdfstring{$b$}{}-conjecture \cite{GouldenJackson1996}]\label{b-conj}
    The coefficients $h^\pi_{\mu,\nu}$ are polynomials in the parameter $b:=\alpha-1$ with non-negative integer coefficients.
\end{conj}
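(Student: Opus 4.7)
The plan is to approach the $b$-conjecture by leveraging the topological expansion of Jack polynomials in terms of maps, following the strategy initiated in \cite{ChapuyDolega2022} and \cite{BenDaliDolega2023}, and augmenting it with the new creation formula and character theory developed in the present paper. The target is to show that $h^\pi_{\mu,\nu}(\alpha)$ is a polynomial in $b := \alpha - 1$ with non-negative integer coefficients, where these coefficients arise as the power-sum expansion of the logarithm of a triple Jack series.

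First, I would rewrite the defining series using the Jack limit ($\gamma \to 0$) of the creation formula \cref{eq thm creationJqt}. The coefficients $h^\pi_{\mu,\nu}$ arise from a logarithm of $\sum_\lambda u^{|\lambda|} J^{(\alpha)}_\lambda[X] J^{(\alpha)}_\lambda[Y] J^{(\alpha)}_\lambda[Z] / j^{(\alpha)}_\lambda$, so after applying the creation-operator formulation, the triple $(\pi, \mu, \nu)$-coefficients become scalar products involving compositions of operators of the form $\GammaplusJ{\lambda_i}$, and the logarithm should acquire a ``connected'' interpretation in this operator language. The reformulation in terms of Macdonald characters $\Mch_\mu$ established in \cref{sec characters} (and its Jack limit \cref{prop Jack characters}) should turn the expansion coefficients in the $p_\pi[X]$-basis into evaluations of characters, providing a new algebraic handle on each individual coefficient.

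Second, I would attempt to interpret the resulting expression as a weighted enumeration of bipartite maps on (possibly non-orientable) surfaces, where each non-orientability feature contributes a factor of $b$. For $b = 0$ (i.e.\ $\alpha = 1$), the series reduces to the Schur case, which is known to count orientable bipartite maps; for $b = 1$ (i.e.\ $\alpha = 2$), one recovers a zonal polynomial identity enumerating all bipartite maps. The idea is to interpolate between these extremes via the Jack character machinery: the now-established positivity theorem for Jack characters from \cite{BenDaliDolega2023} provides the correct ``measure of non-orientability'' for a single character, and through the change of basis $p_\mu \leftrightarrow J^{(\alpha)}_\mu$ one could hope to lift this to the triple product in the three alphabets $X$, $Y$, $Z$.

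The main obstacle, which explains why this conjecture has remained open since 1996, is controlling the logarithm. While the generating series itself admits a reasonably clean creation-formula description, the logarithm mixes contributions of different sizes and introduces cancellations, so that even polynomiality in $b$ is far from the sharp integer positivity. Even granting a combinatorial model in terms of maps on non-orientable surfaces, the topological weight assigned must match $h^\pi_{\mu,\nu}$ on the nose, which requires a genuinely topological interpretation of a quantity defined by a logarithm. I expect the Macdonald-side tools developed here (the operator $\GammaJ$, the Macdonald characters, and the coefficients $\bfh^\pi_{\mu,\nu}(\alpha, \gamma)$ of \cref{sec GJ conjectures}) to furnish an algebraic reformulation and possibly a recursive attack on the positivity; but the final combinatorial step of exhibiting an explicit map-counting model, with a weight whose $b$-expansion reproduces $h^\pi_{\mu,\nu}$, would remain the principal bottleneck.
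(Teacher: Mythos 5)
This statement is an open \emph{conjecture} (Goulden and Jackson's $b$-conjecture), not a theorem: the paper does not prove it, and neither does your proposal. What you have written is a research strategy rather than an argument, and you honestly flag the gap yourself at the end. To be clear about why the gap is fatal and not merely a loose end: the creation formula, the Macdonald characters $\Mch_\mu$, and the coefficients $\bfh^\pi_{\mu,\nu}(\alpha,\gamma)$ of this paper give algebraic reformulations of the generating series, but none of them produces a cancellation-free expansion of the \emph{logarithm}. The logarithm is precisely where all known approaches stall — it mixes degrees, and even polynomiality of $h^\pi_{\mu,\nu}$ in $b$ (let alone integer positivity) is not established. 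Invoking the proved positivity of Jack characters from \cite{BenDaliDolega2023} does not transfer, because that result concerns a single character evaluated at a Young diagram, whereas $h^\pi_{\mu,\nu}$ involves a triple product over three alphabets post-logarithm, and the passage through the basis change $p_\mu\leftrightarrow J^{(\alpha)}_\mu$ reintroduces signs. Your appeal to the $\alpha=1$ and $\alpha=2$ endpoints and ``interpolation'' is heuristic: a combinatorial model matching both endpoints does not determine the intermediate $b$-weights, which is exactly the content of the conjecture. In short, no step in your proposal closes the argument, and you should not present this as a proof; the paper itself records the statement only as \cref{b-conj}, and the furthest it goes is to formulate a Macdonald generalization (\cref{b-conj  generalized}) and to observe that the generalized positivity would imply this one — it offers no proof of either.
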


These two conjectures have combinatorial reformulations in terms of maps counted with ``non-orientability'' weights. As mentioned in the introduction  both of the conjectures are still open. The integrality part in the Matchings-Jack conjecture has been proved in \cite{BenDali2023} and will be useful in the next subsection.
\begin{thm}\cite{BenDali2023}
    For any partitions $\pi,\mu,\nu\vdash n\geq 1$, the coefficient $c^\pi_{\mu,\nu}$ is a polynomial in $b$ with integer coefficients.  
\end{thm}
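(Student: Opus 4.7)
The plan is to derive an explicit formula expressing $c^\pi_{\mu,\nu}(\alpha)$ as a weighted sum of triple products of Jack characters, and then establish polynomiality in $\alpha$ and integrality in the parameter $b = \alpha-1$ by separate arguments.

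First, I would extract the coefficient of $p_\pi[X]\,p_\mu[Y]\,p_\nu[Z]$ on both sides of the defining identity for $c^\pi_{\mu,\nu}$. Writing each $\Jla$ in the power-sum basis via $[p_\rho]\Jla = \theta^{(\alpha)}_\rho(\lambda)/z_\rho$ (for $|\rho| = |\lambda| = n$, after accounting for normalizations), one obtains a formula of the shape
\[
c^\pi_{\mu,\nu}(\alpha) \;=\; z_\pi\,\alpha^{\ell(\pi)} \sum_{\lambda \vdash n} \frac{\theta^{(\alpha)}_\pi(\lambda)\,\theta^{(\alpha)}_\mu(\lambda)\,\theta^{(\alpha)}_\nu(\lambda)}{j^{(\alpha)}_\lambda}.
\]
Each $\theta^{(\alpha)}_\rho(\lambda)$ is a polynomial in $\alpha$, while $j^{(\alpha)}_\lambda$ factors into linear terms of the form $(a + \alpha(\ell+1))(a+1+\alpha\ell)$, so a priori $c^\pi_{\mu,\nu}(\alpha)$ is a rational function. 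Polynomiality can be deduced by combining a degree bound with the fact that specialization at $\alpha = 1$ (Schur case) and $\alpha = 2$ (zonal case) gives non-negative integer counts via Frobenius-type identities, which rules out the potential poles coming from the denominator $j^{(\alpha)}_\lambda$.

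For the integrality statement in $b = \alpha - 1$, the base case $b = 0$ reduces to the classical Frobenius formula for the number of triples of permutations of cycle types $\pi,\mu,\nu$ with product equal to the identity, which is a non-negative integer. To extend this to the higher Taylor coefficients at $b = 0$, I would invoke Lassalle's polynomiality theorem, established in \cite{BenDaliDolega2023}: each normalized Jack character $(-1)^{|\mu|}z_\mu\,\theta^{(\alpha)}_\mu(\lambda)$, evaluated at a Young diagram written in multirectangular coordinates, is a polynomial in $b$ with integer coefficients. Combined with a controlled $b$-expansion of $1/j^{(\alpha)}_\lambda$ after pulling out an appropriate $\alpha^{|\lambda|}$-type normalization, this reduces the claim to verifying that the cancellations between different $\lambda$-terms take place at the level of integers.

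The main obstacle will be precisely this cancellation: individual summands in the formula above are only rational functions of $\alpha$, so showing that the full sum is an integer polynomial in $b$ requires a global identity. A natural route, in the spirit of the present paper, is to repackage the sum as a single operator evaluation using a Jack limit of the creation formula of \cref{thm creationJqt 2}, which manifestly removes the denominators at the outset. An alternative, more representation-theoretic route is to realize $c^\pi_{\mu,\nu}$ as structure constants of a Jack-deformed Ivanov--Kerov partial permutation algebra, whose integer structure renders integrality automatic. This second strategy is essentially the one carried out in \cite{BenDali2023}, where the sum is reinterpreted through a differential-operator recurrence on Jack polynomials that preserves integrality at each step.
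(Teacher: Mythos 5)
This statement is cited from \cite{BenDali2023}; the present paper does not prove it, so there is no in-paper argument to compare against. Evaluating your proposal on its own terms, it does not reach a proof.

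The formula
\[
c^\pi_{\mu,\nu}(\alpha) \;=\; z_\pi\,\alpha^{\ell(\pi)} \sum_{\lambda \vdash n} \frac{\theta^{(\alpha)}_\pi(\lambda)\,\theta^{(\alpha)}_\mu(\lambda)\,\theta^{(\alpha)}_\nu(\lambda)}{j^{(\alpha)}_\lambda}
\]
is indeed the correct starting point, but both of your subsequent steps have genuine gaps. For polynomiality: you propose to combine a degree bound with the fact that the specializations at $\alpha=1$ and $\alpha=2$ are non-negative integers. That reasoning does not work. A rational function of $\alpha$ can take integer values at $\alpha=1$ and $\alpha=2$ (indeed at infinitely many integers) while still having poles elsewhere or being non-polynomial; two point evaluations cannot rule out factors of $j^{(\alpha)}_\lambda$ surviving in the denominator. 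Polynomiality of $c^\pi_{\mu,\nu}$ in $\alpha$ is itself a nontrivial theorem (due to Do\l{}e\k{}ga and F\'eray) and needs an actual argument --- e.g.\ a careful analysis of which linear factors $(a+1+\alpha\ell)$ can appear as poles, showing each residue vanishes, or an operator reformulation that clears denominators structurally.

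For integrality in $b$: you correctly identify that individual summands are rational and that the cancellation between different $\lambda$ is the crux, but you then stop --- ``the main obstacle will be precisely this cancellation,'' followed by two sketched routes neither of which is carried out. The Jack-character positivity/integrality from \cite{BenDaliDolega2023} controls the numerators but says nothing about the denominators $j^{(\alpha)}_\lambda$, and the ``controlled $b$-expansion of $1/j^{(\alpha)}_\lambda$'' you invoke is not supplied. So what you have is a plan, not a proof: you have rederived the formula that sets up the problem, articulated where the difficulty lies, and pointed at two possible strategies (creation-formula repackaging; a deformed partial-permutation algebra) without executing either. Since the difficulty you identify \emph{is} the content of the theorem being cited, the proposal as written does not establish the statement.
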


\subsubsection{Macdonald generalization}\label{ssec generalized conjectures}
We define the coefficients $\bfc^\pi_{\mu,\nu}$ and $\bfh^\pi_{\mu,\nu}$ for partitions $\pi,\mu$ and $\nu$ of the same size by 
\begin{multline}\label{eq def c}
    \sum_{\lambda\in\mathbb Y} u^{|\lambda|} t^{-2n(\lambda)}q^{n(\lambda')}\frac{\JJack_\lambda[X]\JJack_\lambda[Y]\JJack_\lambda[Z]}{\normJt_\lambda}\\
    =\sum_{m\geq 0}\sum_{\pi,\mu,\nu\vdash m} \frac{u^m \bfc^\pi_{\mu,\nu}(\alpha,\gamma)}{z_\pi(q,t)}p_\pi[X]p_\mu[Y]p_\nu[Z],
\end{multline}

and 
\begin{multline}\label{eq def h}
    \log\left(\sum_{\lambda\in\mathbb Y} u^{|\lambda|} t^{-2n(\lambda)}q^{n(\lambda')}\frac{\JJack_\lambda[X]\JJack_\lambda[Y]\JJack_\lambda[Z]}{\normJt_\lambda}\right)\\
    =\sum_{m\geq 0}\sum_{\pi,\mu,\nu\vdash m} \frac{u^m \bfh^\pi_{\mu,\nu}(\alpha,\gamma)}{\alpha [m]_q }p_\pi[X]p_\mu[Y]p_\nu[Z],
\end{multline}
where 
\begin{equation}\label{eq norm j t}
\normJt_\lambda:=\gamma^{-2|\lambda|}\normJ_\lambda=\left\langle\JJack_\lambda,\JJack_\lambda\right\rangle_{q,t},  
\end{equation}
and 
$$[m]_q:=1+q+\dots+q^{m-1}.$$
It is straightforward from the definitions and \cref{eq Macdonald-Jack} that 
$$\bfc^\pi_{\mu,\nu}(\alpha,\gamma=0)=c^\pi_{\mu,\nu}(\alpha)\text{ and }\bfh^\pi_{\mu,\nu}(\alpha,\gamma=0)=h^\pi_{\mu,\nu}(\alpha).$$

\begin{conj}[A Macdonald generalization of the Matchings-Jack conjecture]\label{conj MJ generalized}
For any positive integer $n$ and partitions $\pi,\mu,\nu$ of $n$, the quantity
    $$(1+\gamma)^{n(n-1)}z_\mu z_\nu \bfc^\pi_{\mu,\nu}(\alpha,\gamma)$$
    is a polynomial in $b$ and $\gamma$ with non-negative integer coefficients.
\end{conj}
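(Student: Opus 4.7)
The overall strategy is to split the conjecture into two independent parts: integrality (polynomiality in $b,\gamma$ with integer coefficients after clearing by the stated prefactor) and positivity (non-negativity of those integer coefficients). I would attack integrality first, since its Jack specialization is already a theorem, while positivity is the truly hard part.

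For integrality, the plan is to lift the question to the level of Macdonald characters via the identification $\bfc^\pi_{\mu,\nu}= \bfg^\pi_{\mu,\nu}\big|_{\text{specialization}}$ from Proposition ``prop g c'', and to prove that $(1+\gamma)^{n(n-1)} z_\mu z_\nu \bfg^\pi_{\mu,\nu}(\alpha,\gamma)$ lies in $\mathbb{Z}[b,\gamma]$. The main tool is \cref{thm creationJqt 2}, which expresses $\J_\lambda$ (and hence $\JJack_\lambda$) through nested applications of $\GammaJ$. Expanding both sides of \cref{eq def c} and using the formula \cref{eq GammaJ operator}, one obtains a recursion on $n$ relating $\bfc^\pi_{\mu,\nu}$ to coefficients at smaller size. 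In the Jack limit this recursion degenerates to the system used in BenDali2023; the goal is to show that each step of the Macdonald recursion introduces at most one power of $(1+\gamma)$ per cell pair, accounting precisely for the $n(n-1)$ exponent. The difficulty here is genuinely bookkeeping: tracking denominators $1-t^i = -\gamma[i]_t$ and $1-q^i$ through the plethystic substitutions, and showing they are all absorbed by the combination $(1+\gamma)^{n(n-1)} z_\mu z_\nu$.

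For positivity, I see two complementary routes. The first is to exploit Conjecture ``conj Gamma-p'' on the positivity of $\GammaJ$ acting on the basis $p_\mu[X(1-t)/(1-q)]$. Via the creation formula and the Cauchy identity \cref{eq Cauchy}, this would translate into a positive expansion of $\JJack_\lambda$ in terms of products of characters, from which the $\bfc$ can be read off as a sum of products of non-negative quantities. The second route is to exploit the super nabla reformulation (Proposition ``prop coef c-supernabla'') to import positivity from the Delta conjectures ecosystem of BergeronHaglundIraciRomero2023, where combinatorial models are much better developed. In parallel, one should try to construct a direct combinatorial model: $q$-weighted bipartite maps with a non-orientability statistic tracked by $b$ and an area-like statistic tracked by $\gamma$, specializing to the $\alpha=1,\gamma=0$ orientable map enumeration of GouldenJackson1996a.

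The main obstacle will be positivity, and for two reasons that are hard to circumvent. First, the specialization $\gamma=0$ is exactly the Matchings-Jack conjecture, which has resisted attack for almost thirty years; any proof of the Macdonald version must in particular resolve this. Second, the known partial results in the Jack case use surface topology (ChapuyDolega2022, BenDaliDolega2023) that has no established Macdonald counterpart. A realistic intermediate milestone would therefore be to prove the conjecture in the cases $\mu=\nu=[n]$, or for $\alpha=1$ (the $q=t$ Schur reduction), where the algebra collapses and a direct $q$-enumeration of maps should emerge; these tractable cases would both serve as strong evidence and suggest the right refined statistic needed for a general combinatorial model.
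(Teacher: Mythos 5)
The statement you were asked to prove is labeled a \emph{conjecture} in the paper, and the paper provides no proof of it. What the paper does supply as evidence is: computational verification up to $n\le 8$; a marginal-sums identity (summing over $\nu$) whose positivity and integrality are established explicitly via \cref{prop marginal sums} and the $q$-multinomial argument in its corollary; and, in the $q=t$ (i.e.\ $\alpha=1$) reduction, an \emph{integrality-only} statement via \cref{prop c alpha=1}. Your proposal is therefore not competing with a proof in the paper; it is a research program, and you are candid that it does not close the argument.

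That said, two points deserve emphasis. First, the positivity route you sketch through \cref{conj Gamma-p} cannot, as written, be called a proof strategy: it reduces one open conjecture to another open conjecture. The paper itself only shows that \cref{conj Gamma-p} implies a positivity statement for the characters $\Mch_\mu$, not for the structure constants $\bfc^\pi_{\mu,\nu}$, so even granting \cref{conj Gamma-p} you would have additional work to do. Second, the integrality part you describe as ``genuinely bookkeeping'' is not carried out anywhere in the paper either; the paper obtains integrality only in two degenerate regimes ($\gamma=0$ via BenDali~2023, and $\alpha=1$ via the character formula), and it is not clear that a naive recursion through \cref{eq GammaJ operator} will localize all denominators to the single prefactor $(1+\gamma)^{n(n-1)}z_\mu z_\nu$. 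Your choice of tractable intermediate cases ($\alpha=1$, marginal sums) is well aligned with what the paper actually establishes, so the roadmap is sensible as a roadmap, but you should present it as such and not as a proof: any complete argument would in particular settle the original Matchings-Jack conjecture of Goulden and Jackson, which has been open since 1996.
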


\begin{conj}[A Macdonald generalization of the \texorpdfstring{$b$}{}-conjecture]\label{b-conj  generalized}
For any positive integer $n$ and partitions $\pi,\mu,\nu$ of $n$, the quantity
    $$(1+\gamma)^{n(n-1)}z_\pi z_\mu z_\nu \bfh^\pi_{\mu,\nu}(\alpha,\gamma)$$
    is a polynomial in $b$ and $\gamma$ with non-negative integer coefficients.
\end{conj}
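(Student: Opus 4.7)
The plan is to reduce this conjecture, as much as possible, to the Matchings-Jack statement (\cref{conj MJ generalized}) and the character positivity statement (\cref{conj lassale macdonald}), and then to develop additional combinatorial structure in order to promote non-negativity for the $\bfc$-coefficients to non-negativity for the logarithmic (``connected'') $\bfh$-coefficients. Since the $\gamma=0$ specialization is the classical $b$-conjecture of \cite{GouldenJackson1996}, which is itself a long-standing open problem, any proof strategy must include a genuinely new ingredient beyond what is available in the Jack setting.

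First, I would expand the three-variable Cauchy sum in \cref{eq def h} using the character expansion \cref{eq JJack characters} to rewrite
$$\sum_{\lambda\in\mathbb Y}u^{|\lambda|}t^{-2n(\lambda)}q^{n(\lambda')}\frac{\JJack_\lambda[X]\JJack_\lambda[Y]\JJack_\lambda[Z]}{\normJt_\lambda} = \sum_{\pi,\mu,\nu}p_\pi[X]p_\mu[Y]p_\nu[Z]\cdot A_{\pi,\mu,\nu}(u;q,t),$$
where $A_{\pi,\mu,\nu}$ is a weighted sum over $\lambda$ of the triple product $\MchJack_\pi(\lambda)\MchJack_\mu(\lambda)\MchJack_\nu(\lambda)$ against a $(q,t)$-weight depending on $\lambda$. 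By the exponential formula, the coefficients $\bfh^\pi_{\mu,\nu}$ are then the connected counterparts of $A_{\pi,\mu,\nu}$ after a Möbius-type inversion over set partitions. If \cref{conj lassale macdonald} holds, each triple product $\MchJack_\pi(\lambda)\MchJack_\mu(\lambda)\MchJack_\nu(\lambda)$ is a positive polynomial in $b$ and $\gamma$ evaluated at variables $-\alpha s_i$; the point is therefore to certify that the cancellations coming from the logarithm preserve non-negativity.

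The next step is to seek a combinatorial model for the connected coefficients in terms of weighted maps. In the Jack setting, the measure of non-orientability developed in \cite{ChapuyDolega2022, BenDaliDolega2023} provides such a model for the $\gamma=0$ specialization. I would try to upgrade that measure by introducing a second statistic on the edges or the corners of a map tracking the parameter $\gamma$. A natural candidate is suggested by the combinatorial formula of \cite{HaglundHaimanLoehr2005} for modified Macdonald polynomials, where $(q,t)$-weights arise as content statistics on cells of a Young diagram; via the creation formula \cref{thm creationJqt 2} these statistics should translate to corresponding statistics on the edges of the map. Such a model would make the $(1+\gamma)^{n(n-1)}$ prefactor transparent as a product of edge weights, each a positive polynomial in $\gamma$.

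The hard part will be the absence of any known positivity argument for the logarithm even in the Jack case. Any route through \cref{conj MJ generalized} inherits the difficulty that the cancellations in $\log$ are not visible at the algebraic level and demand a genuine combinatorial bijection between disconnected objects and concatenations of smaller connected ones. A more realistic short-term target would be the integrality statement — that $(1+\gamma)^{n(n-1)}z_\pi z_\mu z_\nu \bfh^\pi_{\mu,\nu}$ lies in $\mathbb{Z}[b,\gamma]$ — which in the Jack case was obtained in \cite{BenDali2023} via a deformation of the Hall scalar product. A $(q,t)$-analog of that construction, using the scalar product $\langle-,-\rangle_{q,t}$ together with the creation formula \cref{thm creationJqt 2}, appears feasible and would constitute substantial progress toward the full conjecture.
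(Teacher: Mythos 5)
This is a \emph{conjecture}, not a theorem: the paper states \cref{b-conj  generalized} as an open problem (tested numerically for $n\le 9$) and offers no proof, only the remark that positivity here would imply positivity in the original $b$-conjecture (\cref{b-conj}), which is itself famously open. So there is no proof in the paper to compare your attempt against.

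Your write-up is honest about this — you correctly observe that the $\gamma=0$ specialization recovers Goulden--Jackson's $b$-conjecture and that any genuine proof would need a new idea not yet available even in the Jack case. But what you have written is a research program, not a proof: there is no argument establishing polynomiality, integrality, or positivity for any $n$. Concretely, the step ``certify that the cancellations coming from the logarithm preserve non-negativity'' is precisely the unsolved core of the problem, and the proposed combinatorial model (upgrading the measure of non-orientability by a $\gamma$-statistic translated via \cref{thm creationJqt 2} from the HHL formula) is a plausible-sounding heuristic with no concrete construction or verification behind it. If the aim is to say something provable, the most sensible direction you already identify — attempting the integrality part by adapting the scalar-product deformation of \cite{BenDali2023} to $\langle-,-\rangle_{q,t}$ — is reasonable, but even that is not carried out here. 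As it stands, the proposal should be labelled as a discussion of possible approaches rather than a proof, because the statement remains an open conjecture both in the paper and in your write-up.
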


\cref{conj MJ generalized} has been tested for $n\leq 8$ and \cref{b-conj  generalized} for $n\leq 9$.

\begin{rmq}
In Equations \ref{eq def c} and \cref{eq def h} it seems possible to change the factor $t^{-2n(\lambda)}q^{n(\lambda')}$ by  $t^{-n(\lambda)}(t^{-n(\lambda)}q^{n(\lambda')})^r$ for some $r\geq 0$ and the conjectures above still hold. However, we will prove in \cref{ssec structure coefficients} that for the specific choice of $r=1$, the coefficients  $\bfc^\pi_{\mu,\nu}(\alpha,\gamma)$ are a special case of the structure coefficients of the characters $\MchJack_\mu$. This implies that in some sense these coefficients are a natural two parameters generalization of the coefficients considered by Goulden and Jackson and justifies the factor $t^{-2n(\lambda)}q^{n(\lambda')}$ which appears in the previous definitions.

\end{rmq}

\begin{prop}
    \cref{conj MJ generalized} implies \cref{conj MJ}.
\begin{proof}
     \cref{conj MJ generalized} implies that the coefficients $c^\pi_{\mu,\nu}(\alpha)$ are polynomials in $\alpha$ with positive coefficients. But  these polynomials have integer coefficients by \cite{BenDali2023}. This concludes the proof. 
\end{proof}
\end{prop}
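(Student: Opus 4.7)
The plan is to specialize $\gamma = 0$ in the generalized conjecture, recover a statement about the Jack coefficients up to an overall positive rational factor, and then invoke the integrality result of \cite{BenDali2023} to upgrade ``non-negative rational'' to ``non-negative integer''.

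First, I would observe that by construction (see the remark right after \cref{eq def h}) one has the specialization
\[
\bfc^\pi_{\mu,\nu}(\alpha,\gamma=0)=c^\pi_{\mu,\nu}(\alpha),
\]
and of course $(1+\gamma)^{n(n-1)}\big|_{\gamma=0}=1$. Therefore, assuming \cref{conj MJ generalized} and setting $\gamma=0$ gives that $z_\mu z_\nu \, c^\pi_{\mu,\nu}(\alpha)$ is a polynomial in $b=\alpha-1$ with non-negative integer coefficients.

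Second, dividing through by the positive integer $z_\mu z_\nu$ shows that $c^\pi_{\mu,\nu}(\alpha)$ is itself a polynomial in $b$ whose coefficients are non-negative rational numbers. This is the entire content of the specialization.

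Finally, to promote non-negative rationals to non-negative integers, I invoke the theorem of \cite{BenDali2023} cited just above the statement, which asserts that $c^\pi_{\mu,\nu}(\alpha)$ is a polynomial in $b$ with integer coefficients. A rational number that is simultaneously a non-negative rational and an integer is a non-negative integer, and we are done. Strictly speaking there is no main obstacle here; the only nontrivial input is the integrality theorem of \cite{BenDali2023}, without which the specialization at $\gamma=0$ would leave us one step short of \cref{conj MJ}.
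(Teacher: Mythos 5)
Your proposal is correct and follows essentially the same route as the paper: specialize $\gamma=0$ to obtain non-negative coefficients (a priori only rational, because of the $z_\mu z_\nu$ normalization), then invoke the integrality theorem of \cite{BenDali2023} to upgrade to non-negative integers. Your write-up is slightly more explicit than the paper's one-line proof in isolating the role of the $z_\mu z_\nu$ factor, but the argument and the key input are identical.
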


Note also that in a similar way the positivity in \cref{b-conj  generalized} implies the positivity in \cref{b-conj}.

\subsection{Connection with the Structure coefficients \texorpdfstring{$\bfg^\pi_{\mu,\nu}$}{} of Macdonald characters}\label{ssec structure coefficients}
In this subsection, we consider the structure coefficients of Macdonald characters, and we prove that in some sense they generalize the coefficients $\bfc^\pi_{\mu,\nu}$ considered in \cref{ssec generalized conjectures}.

Note that $\MchJack_\mu(\lambda)$ is obtained from $\Mch_\mu$ by a normalization by a scalar and a change of variables (see \cref{eq def theta alpha gamma}), hence their structure coefficients are well defined:
 \begin{equation}\label{eq structure coefficients}
  \MchJack_\mu\MchJack_\nu=\sum_{\pi}\bfg^\pi_{\mu,\nu}(\alpha,\gamma)\MchJack_\pi.   
 \end{equation}

It follows from \cref{prop Jack characters} that the coefficients $\bfg^\pi_{\mu,\nu}(\alpha,\gamma)$ are a two parameter generalization of structure coefficients of Jack characters $\Jch_\mu$ introduced by Do\l{}e\k{}ga and Féray in \cite{DolegaFeray2016} (see also \cite{Sniady2019}):
$$\Jch_\mu\Jch_\nu=\sum_{\pi}\bfg^\pi_{\mu,\nu}(\alpha,\gamma=0)\Jch_\pi.$$

In the following, we will prove that in the case $|\pi|=|\mu|=|\nu|$ the coefficients $\bfg^\pi_{\mu,\nu}(\alpha,\gamma)$ coincide with the coefficients $\bfc^\pi_{\mu,\nu}(\alpha,\gamma)$  defined in \cref{ssec generalized conjectures}. The proof is very similar to the one given in \cite{DolegaFeray2016} for the Jack case.
We start by proving some properties of the coefficients $\bfg^\pi_{\mu,\nu}$.
\begin{lem}\label{lem bounds}
    The coefficient 
    $\bfg^\pi_{\mu,\nu}$ is 0 unless 
    $$\max(|\mu|,|\nu|)\leq|\pi|\leq |\mu|+|\nu|.$$

    \begin{proof}
      The upper bound is a consequence of the fact that $\MchJack_\mu \MchJack_\nu$ is a shifted symmetric function of degree $|\mu|+|\nu|$ and that $\left(\MchJack_\pi\right)_{|\pi|\leq d}$ is a basis of the space of shifted symmetric functions of degree less or equal than $d$. On the other hand, for any partition $\lambda$ such that $|\lambda|<\max(|\mu|,|\nu|)$, one has by the vanishing condition that 
      $\MchJack_\mu(\lambda)\MchJack_\nu(\lambda)=0$, and also that
      $$\sum_{|\pi|\geq \max(\mu,\nu)}\bfg^\pi_{\mu,\nu}(\alpha,\gamma)\MchJack_\pi(\lambda)=0.$$ Combining these two equations with \cref{eq structure coefficients}, we get that 
      $$\sum_{|\pi|<\max(|\mu|,|\nu|)}\bfg^\pi_{\mu,\nu}(\alpha,\gamma)\MchJack_\pi(\lambda)=0,\quad \text{ for any }|\lambda|<\max(|\mu|,|\nu|).$$
      But $\sum_{|\pi|< \max(\mu,\nu)}\bfg^\pi_{\mu,\nu}(\alpha,\gamma)\MchJack_\pi$ is a shifted symmetric function of degree smaller than $\max(|\mu|,|\nu|)$. Using \cref{lem uniqueness}, we deduce that it is identically equal to 0, therefore  $\bfg^\pi_{\mu,\nu}(\alpha,\gamma)=0$ for any $|\pi|<\max(\mu,\nu)$.
    \end{proof}
\end{lem}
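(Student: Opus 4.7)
The plan is to prove the two bounds separately, using a degree argument for the upper bound and a vanishing argument for the lower bound.

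For the upper bound $|\pi| \leq |\mu|+|\nu|$, I would observe that $\MchJack_\mu$ and $\MchJack_\nu$ are shifted symmetric functions of degrees $|\mu|$ and $|\nu|$ respectively (this follows from \cref{thm Macdonald characters} together with the normalization in \cref{eq def theta alpha gamma}, since rescaling and affine shifts of variables preserve the degree grading). Their product therefore has degree at most $|\mu|+|\nu|$. Since by \cref{cor char basis} the family $\bigl(\MchJack_\pi\bigr)_{\pi \in \mathbb{Y}}$ is a basis of $\Lambda^*$ and, by \cref{thm Feray}(1), each $\MchJack_\pi$ is homogeneous of top degree $|\pi|$, the triangular structure forces the expansion of $\MchJack_\mu \MchJack_\nu$ in this basis to involve only $\MchJack_\pi$ with $|\pi| \leq |\mu|+|\nu|$.

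For the lower bound, set $N := \max(|\mu|,|\nu|)$. For any Young diagram $\lambda$ with $|\lambda| < N$, the vanishing condition in \cref{thm Feray}(2) applied to $\mu$ or $\nu$ gives $\MchJack_\mu(\lambda)\MchJack_\nu(\lambda)=0$. On the other hand, if $|\pi|\geq N$ and $|\lambda|<N\leq|\pi|$, the same vanishing property gives $\MchJack_\pi(\lambda)=0$. Evaluating \cref{eq structure coefficients} on such a $\lambda$ and splitting the sum according to whether $|\pi|<N$ or $|\pi|\geq N$, both the left-hand side and the high-$\pi$ contributions vanish, leaving
\begin{equation*}
\sum_{|\pi|<N}\bfg^\pi_{\mu,\nu}(\alpha,\gamma)\,\MchJack_\pi(\lambda)=0
\quad\text{for every }\lambda\text{ with }|\lambda|<N.
\end{equation*}

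The function $G := \sum_{|\pi|<N}\bfg^\pi_{\mu,\nu}\,\MchJack_\pi$ is shifted symmetric of degree at most $N-1$ and vanishes on every Young diagram of size at most $N-1$. Applying \cref{lem uniqueness} with $n=N-1$ yields $G\equiv 0$. Since the characters $\bigl(\MchJack_\pi\bigr)_{|\pi|<N}$ are linearly independent (they are part of the basis from \cref{cor char basis}), each coefficient $\bfg^\pi_{\mu,\nu}(\alpha,\gamma)$ with $|\pi|<N$ must vanish, completing the proof. The only subtle point is making sure that the tail with $|\pi|\geq N$ really drops out when evaluating at small $\lambda$; once this is secured, \cref{lem uniqueness} does all the work.
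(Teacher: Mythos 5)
Your argument matches the paper's proof step by step: the upper bound from the degree of the product together with the filtration property of the basis $(\MchJack_\pi)_{|\pi|\leq d}$, and the lower bound by evaluating \cref{eq structure coefficients} on small $\lambda$, peeling off the large-$\pi$ tail by the vanishing property, and invoking \cref{lem uniqueness}. The only cosmetic differences are your explicit naming of $N=\max(|\mu|,|\nu|)$ and the closing appeal to linear independence, both of which are correct and implicit in the paper.
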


We deduce the following corollary.

\begin{cor}\label{cor g}
    Fix a positive integer $m$ and three partitions $\lambda,\mu,\nu\vdash m$. Then 
    $$\MchJack_\mu(\lambda) \MchJack_\nu(\lambda)=\sum_{\pi\vdash m}\bfg^\pi_{\mu,\nu}(\alpha,\gamma)\MchJack_\pi(\lambda).$$ 

    \begin{proof}
        We start by evaluating \cref{eq structure coefficients} in $\lambda$. From the vanishing condition we know that partitions $\pi$ of size larger than $m$ do not contribute to the sum. But  applying \cref{lem bounds} we get that $\bfg^\pi_{\mu,\nu}=0$ if $|\pi|<m$. This concludes the proof. 
    \end{proof}
\end{cor}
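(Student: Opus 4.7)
The plan is to simply evaluate the identity
\[
\MchJack_\mu \MchJack_\nu = \sum_{\pi} \bfg^\pi_{\mu,\nu}(\alpha,\gamma)\, \MchJack_\pi,
\]
from \cref{eq structure coefficients} at the specific Young diagram $\lambda$ of size $m$, and then argue that only partitions $\pi$ of size exactly $m$ survive on the right-hand side. The two obstructions to surviving come from opposite sides: large $\pi$ are killed by a vanishing property of the characters themselves, while small $\pi$ are killed by the structure coefficients.

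First, I would invoke the vanishing part of \cref{thm Feray}: for any partition $\pi$ with $|\pi| > |\lambda| = m$, the shifted-symmetric function $\Mch_\pi$ (and hence $\MchJack_\pi$, which differs only by normalization and a change of variables, see \cref{eq def theta alpha gamma}) evaluates to $0$ at $\lambda$. Thus all terms with $|\pi| > m$ contribute nothing to the evaluation.

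Next, I would appeal to \cref{lem bounds}, which asserts that $\bfg^\pi_{\mu,\nu}(\alpha,\gamma) = 0$ unless $\max(|\mu|,|\nu|) \leq |\pi| \leq |\mu|+|\nu|$. Since $|\mu| = |\nu| = m$ by hypothesis, this forces $|\pi| \geq m$, removing all terms with $|\pi| < m$ from the sum.

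Combining these two observations, only partitions $\pi \vdash m$ contribute on the right-hand side, so the evaluation of \cref{eq structure coefficients} at $\lambda$ collapses exactly to
\[
\MchJack_\mu(\lambda)\, \MchJack_\nu(\lambda) = \sum_{\pi \vdash m} \bfg^\pi_{\mu,\nu}(\alpha,\gamma)\, \MchJack_\pi(\lambda),
\]
which is the claim. There is really no serious obstacle here: the corollary is just the happy coincidence that the upper bound coming from the vanishing property of characters exactly matches the lower bound coming from \cref{lem bounds} when $|\mu| = |\nu| = m$. The entire point of the statement is to isolate the ``diagonal'' size $|\pi| = m$, so the proof is essentially a bookkeeping argument once \cref{lem bounds} is in hand.
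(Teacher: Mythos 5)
Your proof is correct and takes essentially the same approach as the paper: evaluate \cref{eq structure coefficients} at $\lambda$, use the vanishing property (property (2) of \cref{thm Feray}) to eliminate $|\pi| > m$, and use \cref{lem bounds} to eliminate $|\pi| < m$.
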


We then have the following proposition.

\begin{prop}\label{prop g c}
    Let  $\pi,\mu$ and $\nu$ be three partitions of the same size $m$. Then 
    $$\bfc^\pi_{\mu,\nu}(\alpha,\gamma)=\bfg^\pi_{\mu,\nu}(\alpha,\gamma).$$
    \begin{proof}

We introduce for each $\mu,\nu\vdash m$ the two generating series
$$C_{\mu,\nu}:=\sum_{\pi\vdash m}\frac{\bfc^\pi_{\mu,\nu}(\alpha,\gamma)}{z_\pi(q,t)}p_\pi[X],$$
and 
$$G_{\mu,\nu}:=\sum_{\pi\vdash m}\frac{\bfg^\pi_{\mu,\nu}(\alpha,\gamma)}{z_\pi(q,t)}p_\pi[X].$$
We want to prove that these two series are equal.
From the definition of the coefficients $\bfc^\pi_{\mu,\nu}$ and \cref{eq JJack characters} we have
\begin{align*}
    C_{\mu,\nu}
    =\sum_{\lambda\vdash m}\frac{\MchJack_\mu(\lambda) \MchJack_\nu(\lambda)}{t^{-2n(\lambda)} q^{n(\lambda')}\normJt_\lambda}\JJack_\lambda[X].
\end{align*}
    Using \cref{cor g}, we get that 
    \begin{equation}\label{eq C}
      C_{\mu,\nu}=\sum_{\pi,\lambda\vdash m} \frac{\bfg^\pi_{\mu,\nu}(\alpha,\gamma)\MchJack_\pi(\lambda)}{t^{-2n(\lambda)} q^{n(\lambda')}\normJt_\lambda}\JJack_\lambda[X].
    \end{equation}
    
    On the other hand, using the fact that both Macdonald polynomials and the power-sum functions are orthogonal families, \cref{eq JJack characters} can be inverted as follows
    $$\frac{p_\pi}{z_\pi(q,t)}=\sum_{\lambda\vdash m}\frac{\MchJack_\pi(\lambda)}{t^{-2n(\lambda)}q^{n(\lambda')}\normJt_\lambda}\JJack_\lambda.$$
    Hence, \cref{eq C} becomes 
    $$C_{\mu,\nu}=\sum_{\pi\vdash m}\frac{\bfg^\pi_{\mu,\nu}}{z_\pi(q,t)}p_\pi[X].$$
    
    This is precisely the series $G_{\mu,\nu}$, which concludes the proof of the proposition.
\end{proof}
\end{prop}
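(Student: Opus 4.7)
My plan is to follow the standard template for identifying two collections of coefficients defined by different recipes: repackage each of them as a single generating series in the alphabet $X$, transform one into the other using the character expansion of Macdonald polynomials, and invoke the structure-constant relation for the characters to close the loop.

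The plan is to introduce, for each $m\geq 1$ and each pair $\mu,\nu\vdash m$, the two power-sum generating series
\[
C_{\mu,\nu}:=\sum_{\pi\vdash m}\frac{\bfc^\pi_{\mu,\nu}(\alpha,\gamma)}{z_\pi(q,t)}\,p_\pi[X],\qquad
G_{\mu,\nu}:=\sum_{\pi\vdash m}\frac{\bfg^\pi_{\mu,\nu}(\alpha,\gamma)}{z_\pi(q,t)}\,p_\pi[X],
\]
and reduce the proposition to the identity $C_{\mu,\nu}=G_{\mu,\nu}$. The first step would be to extract the coefficient of $p_\mu[Y]p_\nu[Z]u^m$ from the defining series \eqref{eq def c}. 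For this I would use the character expansion \eqref{eq JJack characters}, namely $q^{n(\lambda')}t^{-2n(\lambda)}\JJack_\lambda=\sum_{\mu\vdash|\lambda|}\MchJack_\mu(\lambda)p_\mu$, which on inverting via orthogonality of power sums and of Macdonald polynomials yields
\[
\frac{p_\mu}{z_\mu(q,t)}=\sum_{\lambda\vdash|\mu|}\frac{\MchJack_\mu(\lambda)}{t^{-2n(\lambda)}q^{n(\lambda')}\normJt_\lambda}\,\JJack_\lambda.
\]
Applying this for the $Y$ and $Z$ alphabets gives
\[
C_{\mu,\nu}=\sum_{\lambda\vdash m}\frac{\MchJack_\mu(\lambda)\MchJack_\nu(\lambda)}{t^{-2n(\lambda)}q^{n(\lambda')}\normJt_\lambda}\,\JJack_\lambda[X].
\]

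The second step is to replace the product $\MchJack_\mu(\lambda)\MchJack_\nu(\lambda)$ by its $\bfg$-expansion. This is exactly the content of \cref{cor g}, which tells us that for $\lambda\vdash m$ only structure coefficients $\bfg^\pi_{\mu,\nu}$ with $|\pi|=m$ show up. Substituting gives
\[
C_{\mu,\nu}=\sum_{\pi\vdash m}\bfg^\pi_{\mu,\nu}(\alpha,\gamma)\sum_{\lambda\vdash m}\frac{\MchJack_\pi(\lambda)}{t^{-2n(\lambda)}q^{n(\lambda')}\normJt_\lambda}\,\JJack_\lambda[X].
\]
The inner sum over $\lambda$ is precisely $p_\pi[X]/z_\pi(q,t)$ by the inversion formula above, so the whole expression collapses to $G_{\mu,\nu}$, finishing the proof.

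The only delicate point, and the step I would want to check most carefully, is the size bookkeeping: the equality $C_{\mu,\nu}=\sum_\lambda(\cdots)\JJack_\lambda[X]$ is automatically indexed by $\lambda\vdash m$ from the definition of $\bfc^\pi_{\mu,\nu}$, but the subsequent expansion of the product of characters a priori could produce $\bfg^\pi_{\mu,\nu}$ with $|\pi|\neq m$; this is ruled out by \cref{lem bounds}/\cref{cor g}, which are themselves the genuine inputs to the argument. Once that restriction is in place everything else is formal manipulation with generating series and orthogonality, so I expect no further obstacles.
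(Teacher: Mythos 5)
Your proposal is correct and follows exactly the same route as the paper's proof: the same generating series $C_{\mu,\nu}$ and $G_{\mu,\nu}$, the same use of \cref{eq JJack characters} and its inversion via orthogonality, and the same appeal to \cref{cor g} to restrict the structure constants to $|\pi|=m$. Your closing remark about the size bookkeeping is precisely the role that \cref{lem bounds} and \cref{cor g} play in the paper.
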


Let $f$ be the function defined on tuples of non-negative integers $(n_1,n_2,k)$ by
$$f(n_1,n_2,k):=(M-m)(M+m-k)+m(m-1)-(k-M)(k-M-1),$$
where 
$$M:=\max(n_1,n_2) \text{ and } m=\min(n_1,n_2).$$
We consider the following conjecture.
\begin{conj}\label{conj structure coeff}
Let $\pi,\mu,\nu$ be three partitions.
    Then, the normalized coefficients 
    $$(1+\gamma)^{f(|\mu|,|\nu|,|\pi|)} z_\mu z_\nu \bfg^\pi_{\mu,\nu}$$
    are polynomials in $b:=\alpha-1$ and $\gamma$ with non-negative integer coefficients.
\end{conj}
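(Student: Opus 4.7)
The plan is to split \cref{conj structure coeff} into three substatements: polynomiality of the normalized coefficient in $(\alpha,\gamma)$, location of the $(1+\gamma)$ denominator (i.e.\ pinning down the exponent $f(|\mu|,|\nu|,|\pi|)$), and finally positivity in $(b,\gamma)$. As a consistency check, \cref{prop g c} together with $f(n,n,n)=n(n-1)$ shows that in the equi-size case $|\mu|=|\nu|=|\pi|=n$ the statement restricts exactly to \cref{conj MJ generalized}; moreover \cref{lem bounds} gives $\bfg^\pi_{\mu,\nu}=0$ outside the window $\max(|\mu|,|\nu|)\le|\pi|\le|\mu|+|\nu|$, on which $f$ is defined. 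Any proof must therefore simultaneously contain the Matchings-Jack Macdonald conjecture and a ``mixed-degree'' extension of it.

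For polynomiality and the denominator, I would induct on $|\pi|$ within this window. Evaluating \cref{eq structure coefficients} at a Young diagram $\lambda$ and using the triangularity of $(\MchJack_\pi)_\pi$ with respect to the size filtration (\cref{cor char basis}), one recovers $\bfg^\pi_{\mu,\nu}$ by inverting a triangular linear system whose diagonal entries are the normalization constants appearing in \cref{eq JJack characters}. Tracking the $\gamma$-valuations of $z_\pi(q,t)$, of $\normJt_\lambda=\gamma^{-2|\lambda|}\normJ_\lambda$, and of the adjoint operators $h^\perp_r[X/(1-t)]$ entering through \cref{eq charactersJack J}, one should bound the denominator by an explicit power of $(1+\gamma)$. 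Matching this power to $f(|\mu|,|\nu|,|\pi|)$ should then reduce to the two extremal cases $|\pi|=\max(|\mu|,|\nu|)$ (a Pieri-like situation controlled by \cref{thm Pieri rule} and the explicit formula \cref{eq Gamma formula}) and $|\pi|=|\mu|+|\nu|$ (governed by the top-degree part of $\MchJack_\mu\cdot\MchJack_\nu$), where the recursion bottoms out.

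The main obstacle is positivity. Even at $\gamma=0$ this subsumes the open Do\l{}e\k{}ga--F\'eray positivity conjecture for structure constants of Jack characters, which is currently known only in very special cases. The most promising route would be to lift the ``weighted non-orientable maps'' combinatorial model of \cite{BenDaliDolega2023} to the Macdonald setting, interpreting the operator $\GammaJ$ as inserting a marked face in a $(q,t)$-weighted map. This dovetails with \cref{conj Gamma-p}, whose predicted positivity in $-v, q', \gamma$ is exactly what such a model would produce; the product $\MchJack_\mu\cdot\MchJack_\nu$ should then correspond to a gluing enumeration from which the exponent $f(|\mu|,|\nu|,|\pi|)$ emerges from the Euler characteristic of the glued surface. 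The hard part will be constructing a genuine $\gamma$-statistic on maps that specializes correctly at $\gamma=0$ and respects the gluing operation; without such a statistic I would expect only partial results, e.g.\ at $b=0$ (orientable case) or for very special triples $(\mu,\nu,\pi)$ where the character product degenerates.
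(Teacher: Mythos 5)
This statement is a \emph{conjecture} in the paper (\cref{conj structure coeff}), not a theorem: the authors give no proof, only the remark that it has been verified computationally for $|\pi|,|\mu|,|\nu|\le 7$ and that \cref{conj MJ generalized} is recovered as the equi-size case via \cref{prop g c} and $f(n,n,n)=n(n-1)$. So there is nothing in the paper to compare your argument against, and your own submission is candid that it is a plan rather than a proof.

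Your consistency checks are correct and mirror exactly what the paper records: the reduction to \cref{conj MJ generalized} when $|\mu|=|\nu|=|\pi|$, and the support window $\max(|\mu|,|\nu|)\le|\pi|\le|\mu|+|\nu|$ coming from \cref{lem bounds}. The first half of your sketch (polynomiality and pinning down the $(1+\gamma)$-exponent by triangular inversion against $(\MchJack_\pi)_\pi$, tracking $\gamma$-valuations through $z_\pi(q,t)$, $\normJt_\lambda$, and the $h^\perp_r[X/(1-t)]$ appearing in \cref{eq charactersJack J}) is a reasonable route to the \emph{denominator} statement, but you have not actually carried out the valuation bookkeeping that would show the exponent is $f(|\mu|,|\nu|,|\pi|)$ and not something larger; this is precisely the content the conjecture asserts, and you would need to exhibit the cancellations, not just a strategy for finding them. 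The second half (positivity) you correctly flag as open: at $\gamma=0$ it already contains the unproved positivity of structure constants of Jack characters, and your proposed ``$\gamma$-weighted maps'' model is a hope rather than a construction. In short: you have correctly identified what the conjecture says, where it sits relative to the paper's other statements, and why it is hard, but there is no proof here — nor in the paper.
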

This conjecture has been tested for $|\pi|,|\mu|,|\nu|\leq 7$. Since $f(n,n,n)=n(n-1)$, and given \cref{prop g c}, it is easy to check that \cref{conj MJ generalized} is a special case of \cref{conj structure coeff}.

\begin{rmq}
    \'{S}niady has formulated a positivity conjecture about the structure coefficients of Jack characters \cite[Conjecture 2.2]{Sniady2019}. This conjecture is related to the case $\gamma=0$ in \cref{conj structure coeff} but the normalizations are different.
\end{rmq}

\subsection{Reformulation with the super nabla operator}\label{sec Reformulation nabla}
The super nabla operator has been recently introduced in \cite{BergeronHaglundIraciRomero2023}. It is defined by its action on modified Macdonald polynomials 
$$\nabla_Y \widetilde H_\lambda[X]=\widetilde H_\lambda[X] \widetilde H_\lambda[Y],$$
where $Y:=y_1+y_2+\dots$ is a second alphabet of variables. We consider here the integral version of this operator $\nablaJ_Y$ defined by 
$$\nablaJ_Y \J_\lambda[X]=t^{-n(\lambda)}\J_\lambda[X] \J_\lambda\left[Y\right].$$
\begin{prop}\label{prop coef c-supernabla}
Let $\pi \vdash m$.
    $$\frac{1}{\gamma^m}\nablaJ\cdot \nablaJ_{Y}\cdot p_\pi[X]=\sum_{\mu,\nu\vdash m}\bfc^{\pi}_{\mu,\nu}(\alpha,\gamma)p_\mu[X] p_\nu[Y].$$
    \begin{proof}
        Let $Z$ be a third alphabet of variables, and let $\widetilde \nablaJ$ and $\widetilde \nablaJ_Y$ denote respectively the nabla and the super nabla operators acting on the space of symmetric functions in the alphabet $Z$.   
        Using the fact that power-sum functions form an orthogonal basis, one has
        \begin{align*}
            \frac{1}{\gamma^m}\nablaJ\nablaJ_{Y}\cdot p_\pi[X]
            &=\left\langle\frac{1}{\gamma^m}\widetilde\nablaJ\widetilde\nablaJ_{Y}\cdot p_\pi[Z],\sum_{\pi\vdash m}\frac{p_\mu[X]p_\mu[Z]}{z_\mu(q,t)}\right\rangle_{q,t},
        \end{align*}
    where the scalar product is taken with respect to the alphabet $Z$. But using the Cauchy identity \cref{eq Cauchy} we get
        \begin{align*}
            \frac{1}{\gamma^m}\nablaJ\nablaJ_{Y}\cdot p_\pi[X]
            &=\left\langle\frac{1}{\gamma^m}\widetilde\nablaJ\widetilde\nablaJ_{Y}\cdot p_\pi[Z],\sum_{\lambda\vdash m}\frac{\J_\lambda[X]\J_\lambda[Z]}{\normJ_\lambda}\right\rangle_{q,t}.
        \end{align*}
     Using the fact that the nabla operators are self-dual, we get 
\begin{align*}
   \frac{1}{\gamma^m}\nablaJ\nablaJ_{Y}\cdot p_\pi[X]
   &= \left\langle\frac{1}{\gamma^m} p_\pi[Z],\widetilde\nablaJ_{Y}\widetilde\nablaJ\cdot \sum_{\lambda\vdash m}\frac{\J_\lambda[X]\J_\lambda[Z]}{\normJ_\lambda}\right\rangle_{q,t}\\
   &=\left\langle\frac{1}{\gamma^m} p_\pi[Z],(-1)^m \sum_{\lambda\vdash m}q^{n(\lambda')}t^{-2n(\lambda)}\frac{\J_\lambda[X]\J_\lambda[Z]\J_\lambda[Y]}{\normJ_\lambda}\right\rangle_{q,t}\\
&=\left\langle p_\pi[Z],\sum_{\lambda\vdash m}q^{n(\lambda')}t^{-2n(\lambda)}\frac{\JJack_\lambda[X]\JJack_\lambda[Z]\JJack_\lambda[Y]}{\normJt_\lambda}\right\rangle_{q,t}\\
&=\sum_{\mu,\nu\vdash m}\bfc^{\pi}_{\mu,\nu}(\alpha,\gamma)p_\mu[X] p_\nu[Y].\qedhere
\end{align*}
    \end{proof}
\end{prop}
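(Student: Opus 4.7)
My plan is to use the Cauchy identity \eqref{eq Cauchy} as a reproducing kernel for the Hall scalar product $\langle -,-\rangle_{q,t}$ together with the diagonal action of $\nablaJ$ and $\nablaJ_Y$ on the Macdonald basis $\{\J_\lambda\}$, and then recognize the resulting expression via the definition \eqref{eq def c} of $\bfc^\pi_{\mu,\nu}$.

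Introducing an auxiliary alphabet $Z$ and using that $p_\pi[X]$ is homogeneous of degree $m$, the reproducing kernel identity combined with \eqref{eq Cauchy} gives
$$p_\pi[X] \;=\; \left\langle p_\pi[Z],\, \sum_{\mu \vdash m} \frac{p_\mu[X]\, p_\mu[Z]}{z_\mu(q,t)} \right\rangle_{q,t} \;=\; \left\langle p_\pi[Z],\, \sum_{\lambda \vdash m} \frac{\J_\lambda[X]\, \J_\lambda[Z]}{\normJ_\lambda} \right\rangle_{q,t},$$
where the scalar product is taken in the $Z$-alphabet. I then apply $\nablaJ \nablaJ_Y$ on the $X$-alphabet under the scalar product. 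Using the eigenvalue formulas from the introduction together with $\prod_{\Box} q^{a'(\Box)} = q^{n(\lambda')}$ and $\prod_{\Box} t^{-\ell'(\Box)} = t^{-n(\lambda)}$, one has $\nablaJ \nablaJ_Y \J_\lambda[X] = (-1)^{|\lambda|} q^{n(\lambda')} t^{-2n(\lambda)} \J_\lambda[X] \J_\lambda[Y]$, hence
$$\nablaJ \nablaJ_Y \cdot p_\pi[X] \;=\; (-1)^m \left\langle p_\pi[Z],\, \sum_{\lambda \vdash m} q^{n(\lambda')} t^{-2n(\lambda)}\, \frac{\J_\lambda[X]\, \J_\lambda[Y]\, \J_\lambda[Z]}{\normJ_\lambda} \right\rangle_{q,t}.$$

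Next, I rewrite the triple product using the Jack-normalization $\J_\lambda = (-\gamma)^{|\lambda|} \JJack_\lambda$ and the relation $\normJt_\lambda = \gamma^{-2|\lambda|} \normJ_\lambda$ from \eqref{eq norm j t}. On degree $m$ the triple quotient acquires the prefactor $(-\gamma)^{3m}/\gamma^{2m} = -\gamma^m$, which together with the $(-1)^m$ above and the scalar $1/\gamma^m$ collapses to $1$, yielding
$$\frac{1}{\gamma^m}\, \nablaJ \nablaJ_Y \cdot p_\pi[X] \;=\; \left\langle p_\pi[Z],\, \sum_{\lambda \vdash m} t^{-2n(\lambda)} q^{n(\lambda')}\, \frac{\JJack_\lambda[X]\, \JJack_\lambda[Y]\, \JJack_\lambda[Z]}{\normJt_\lambda} \right\rangle_{q,t},$$
whose interior is precisely the degree-$m$ component of the LHS of \eqref{eq def c}. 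Substituting the RHS of \eqref{eq def c} and extracting the $Z$-coefficient by Hall orthogonality $\langle p_\pi[Z],p_\nu[Z]\rangle_{q,t}=\delta_{\pi,\nu} z_\pi(q,t)$ produces the desired $\sum_{\mu,\nu \vdash m} \bfc^\pi_{\mu,\nu}\, p_\mu[X] p_\nu[Y]$, after the small bookkeeping that exploits the evident symmetry of the triple $\JJack_\lambda[X]\JJack_\lambda[Y]\JJack_\lambda[Z]$ in the three alphabets (equivalently, the relations $z_\pi \bfc^\nu_{\mu,\pi} = z_\nu \bfc^\pi_{\mu,\nu}$ and $\bfc^\pi_{\mu,\nu} = \bfc^\pi_{\nu,\mu}$ forced on the coefficients).

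The only real ``obstacle'' is the careful tracking of signs and powers of $\gamma$ in the third step, which is what justifies the $1/\gamma^m$ prefactor on the left and the specific $t^{-2n(\lambda)} q^{n(\lambda')}$ weight built into the definition \eqref{eq def c}; conceptually the argument is a clean application of Cauchy plus eigenvalue computation.
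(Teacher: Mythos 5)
Your proof is correct and follows essentially the same line as the paper's: plug the degree-$m$ power-sum/Macdonald Cauchy kernel in a dummy alphabet $Z$, act on the Macdonald eigenbasis via the diagonal formulas for $\nablaJ$ and $\nablaJ_Y$, convert $\J_\lambda$ to $\JJack_\lambda$ and $\normJ_\lambda$ to $\normJt_\lambda$, and match against the definition of $\bfc^\pi_{\mu,\nu}$. The only cosmetic difference is that you apply $\nablaJ\nablaJ_Y$ to the $X$-slot of the kernel directly, whereas the paper applies the operators to $p_\pi[Z]$ and then transfers them across the scalar product using self-adjointness of $\nablaJ$ and $\nablaJ_Y$; the two moves are equivalent. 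One small slip in your bookkeeping: $(-\gamma)^{3m}/\gamma^{2m}=(-1)^{m}\gamma^{m}$, not $-\gamma^{m}$, but your subsequent cancellation $(-1)^m\gamma^m\cdot(-1)^m\cdot\gamma^{-m}=1$ shows you tracked the sign correctly, so this is a typo rather than an error. Your final remark about using the $X\leftrightarrow Y\leftrightarrow Z$ symmetry of the triple product (equivalently the induced symmetries of $\bfc^\pi_{\mu,\nu}/z_\pi(q,t)$) to reconcile alphabet labels is correct, though one can also just read off \eqref{eq def c} with $Z$ assigned to the first alphabet slot, which is what the paper implicitly does.
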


\subsection{Special cases in \texorpdfstring{\cref{conj MJ generalized}}{}}\label{sec Special cases}
In this subsection, we discuss some particular cases in \cref{conj MJ generalized}, respectively related to marginals sums and to the specialization $q=t$.

\subsubsection{Marginal sums}
We recall the usual $q$-notation.
We set 
$$[m]_q:=1+q+\dots+q^{m-1} \text{ for any }  m\geq1,$$
$$[0]_q!=1 \text{ and }[m]_q!:=[m]_q[m-1]_q\dots[1]_q,\text{ for any }  m\geq1,$$

$$\text{and } \left[\begin{array}{c}
     m  \\
    k_1,k_2,\dots,k_l 
\end{array}\right]_q:=\frac{[m]_q!}{[k_1]_q!\dots [k_l]_q!}$$
for any $m,l\geq 0$ and $k_1+\dots+k_l=m$.
It is well known that all these quantities are polynomials in $q$ with non-negative integer coefficients. Finally, if $a$ is a parameter then
$$(a;q)_m:=(1-a)(1-qa)\cdots(1-q^{m-1}a) \text{ for any }m\geq 1.$$
\begin{lem}\label{lem J specialized in 1}
Let $\lambda$ be a partition of size $m$. Then,
    $$\sum_{\mu\vdash m}\left[p_\mu\right]\J_\lambda=J_\lambda[1]=\left\{\begin{array}{cc}
         (t;q)_m&\text{ if }\lambda=[m],  \\
         0& otherwise. 
    \end{array}\right.$$
    \begin{proof}
        The first part of the equation is direct from the definitions. To obtain the second one we take $v=t$ in \cref{thm plethystic subt}. 
    \end{proof}
\end{lem}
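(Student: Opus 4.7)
The plan is to prove the two claimed equalities in turn, with the second being a direct application of \cref{thm plethystic subt}.

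For the first equality, I would simply unpack the plethystic substitution: evaluating $f[X]$ at $X = 1$ means sending $p_k \mapsto 1$ for every $k \geq 1$, so $p_\mu[1] = 1$ for every partition $\mu$. Writing $\J_\lambda = \sum_{\mu} [p_\mu]\J_\lambda \cdot p_\mu$ and specializing $X=1$, the first equality follows immediately. (One should observe that only partitions $\mu \vdash m$ contribute by homogeneity, since $\J_\lambda$ is of degree $m = |\lambda|$.)

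For the second equality, I would specialize $v = t$ in \cref{thm plethystic subt}. Since $\tfrac{1-t}{1-t} = 1$, the left-hand side becomes $\J_\lambda[1]$, while the right-hand side gives
\begin{equation*}
\J_\lambda[1] = \prod_{\Box \in \lambda}\bigl(t^{\ell'_\lambda(\Box)} - t\cdot q^{a'_\lambda(\Box)}\bigr).
\end{equation*}
If $\lambda = [m]$ is a single row, then the cells are $(1,j)$ for $1 \leq j \leq m$, so $\ell'_\lambda(\Box) = 0$ and $a'_\lambda(\Box) = j-1$ ranges over $0, 1, \ldots, m-1$. Hence the product equals $\prod_{j=0}^{m-1}(1 - t\,q^j) = (t;q)_m$, as claimed. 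If on the other hand $\lambda$ has at least two parts, then the cell $(2,1)$ belongs to $\lambda$ and satisfies $\ell'_\lambda((2,1)) = 1$ and $a'_\lambda((2,1)) = 0$, producing the factor $t - t\cdot 1 = 0$, which annihilates the product.

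There is no real obstacle here; the only thing to be mindful of is remembering that \cref{thm plethystic subt} is stated with $a'$ and $\ell'$ (the co-arm and co-leg), and carefully identifying the vanishing cell when $\lambda \neq [m]$. This completes the proof.
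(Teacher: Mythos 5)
Your proof is correct and follows exactly the paper's approach: the first equality comes from unwinding $p_k \mapsto 1$, and the second from setting $v=t$ in \cref{thm plethystic subt}, with the same identification of the vanishing cell $(2,1)$ when $\lambda$ has more than one row. You simply spell out the computation that the paper leaves implicit.
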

For a proof of the following lemma, see \cite[Chapter VI, Section 8, Example 1]{Macdonald1995}.
\begin{lem}\label{lem J_m}
    For every $m\geq1$,
    $$\J_{[m]}[X]=(q;q)_m\cdot h_m\left[X\frac{1-t}{1-q}\right]=(q;q)_m\sum_{\pi\vdash n}\frac{1}{z_\pi}p_\pi\left[X\frac{1-t}{1-q}\right].$$
\end{lem}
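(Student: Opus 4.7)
The plan is to derive the first equality from the Pieri rule applied at the empty partition, and to observe that the second equality is an immediate consequence of the classical identity $h_m=\sum_{\pi\vdash m}p_\pi/z_\pi$.

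First I would dispose of the second equality. Since $h_m=\sum_{\pi\vdash m}p_\pi/z_\pi$ as an identity in $\Lambda$, applying the plethystic substitution $X\mapsto X\tfrac{1-t}{1-q}$ gives
\[ h_m\!\left[X\tfrac{1-t}{1-q}\right]=\sum_{\pi\vdash m}\frac{1}{z_\pi}\,p_\pi\!\left[X\tfrac{1-t}{1-q}\right], \]
which is exactly the second equality of the statement.

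For the first equality, I would apply the Pieri rule (Theorem~\ref{thm Pieri rule}) to the trivial Macdonald polynomial $J^{(q,t)}_{\emptyset}=1$. A horizontal strip of size $m$ added to the empty diagram is a partition $\xi$ with $\xi'_i\le 1$ for every $i$, i.e.\ $\ell(\xi)\le 1$. The only such partition of size $m\ge 1$ is $\xi=[m]$, so the Pieri expansion collapses to
\[ h_m\!\left[X\tfrac{1-t}{1-q}\right]=\eta_{\emptyset,[m]}\,\J_{[m]}[X] \]
for a single scalar $\eta_{\emptyset,[m]}$.

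It remains to identify that scalar with $(q;q)_m^{-1}$. I would do this by specializing $X\to 1$. On the one hand, Lemma~\ref{lem J specialized in 1} gives $\J_{[m]}[1]=(t;q)_m$. On the other hand, using $p_k\!\left[\tfrac{1-t}{1-q}\right]=\tfrac{1-t^k}{1-q^k}$ and expanding through the plethystic exponential,
\[ \sum_{m\ge 0}h_m\!\left[\tfrac{1-t}{1-q}\right]z^m=\Exp\!\left[z\,\tfrac{1-t}{1-q}\right]=\prod_{j\ge 0}\frac{1-ztq^j}{1-zq^j}=\sum_{m\ge 0}\frac{(t;q)_m}{(q;q)_m}\,z^m, \]
the last equality being the $q$-binomial theorem of Heine. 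Comparing coefficients gives $h_m[\tfrac{1-t}{1-q}]=(t;q)_m/(q;q)_m$, and thus $\eta_{\emptyset,[m]}=1/(q;q)_m$, which proves the first equality.

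There is no real obstacle here; the only point worth checking carefully is the horizontal-strip condition from $\emptyset$, which forces $\xi=[m]$ and reduces Pieri to a single term, and the $q$-binomial evaluation of $h_m[\tfrac{1-t}{1-q}]$, which is classical.
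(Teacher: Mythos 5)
Your proof is correct. The paper itself gives no argument for this lemma: it simply cites Macdonald's book (Chapter VI, Section 8, Example 1), so there is no in-text proof to compare yours against. What you supply is a short, self-contained derivation that only uses tools already in the paper. The second equality is the plethystic image of the classical identity $h_m=\sum_{\pi\vdash m}p_\pi/z_\pi$, as you say. For the first, applying the Pieri rule (\cref{thm Pieri rule}) at $\lambda=\emptyset$ is exactly the right move: the horizontal-strip condition $0\le\xi'_i\le 1$ forces $\xi=[m]$, so $h_m\bigl[X\tfrac{1-t}{1-q}\bigr]$ is a scalar multiple of $\J_{[m]}$; the scalar is then pinned down by specializing $X\to 1$, using $\J_{[m]}[1]=(t;q)_m$ from \cref{lem J specialized in 1} (stated just before \cref{lem J_m}, so there is no circularity) together with the $q$-binomial evaluation $h_m\bigl[\tfrac{1-t}{1-q}\bigr]=(t;q)_m/(q;q)_m$. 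The only cosmetic point is that the index $n$ in the displayed sum $\sum_{\pi\vdash n}$ in the statement is a typo for $m$, which you correctly read as such.
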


\begin{prop}\label{prop marginal sums}
Let $\pi,\mu\vdash m.$
    $$\sum_{\nu\vdash m}\bfc^\pi_{\mu,\nu}(\alpha,\gamma)=(1-t)^{-m}\frac{1}{z_\mu}q^{\binom{m}{2}}(q;q)_m p_\mu\left[\frac{1-t}{1-q}\right].$$
    \begin{proof}
    We adapt here the proof given in \cite{GouldenJackson1996} for the Jack polynomials setting. Using \cref{lem J specialized in 1}, we write
    \begin{align*}
  \sum_{\nu\vdash m} \bfc^\pi_{\mu,\nu}(\alpha,\gamma)
  &=(1-t)^{-m}[p_\mu[Y]]\left\langle p_\pi[X],\sum_{\lambda\vdash m}t^{-2n(\lambda)}q^{n(\lambda')}\frac{\J_\lambda[X]\J_\lambda[Y]\J_\lambda[1]}{\normJ_\lambda}\right\rangle_{q,t}.
  \end{align*}
    But we know from \cref{lem J specialized in 1} that only the term corresponding to $\lambda=[m]$ contributes to the sum. Hence,
  \begin{align*}
  \sum_{\nu\vdash m} \bfc^\pi_{\mu,\nu}(\alpha,\gamma)&=(1-t)^{-m}[p_\mu[Y]]\left\langle p_\pi[X],t^{-2n([m])}q^{n([m])}\frac{\J_{[m]}[X]\J_{[m]}[Y]\J_{[m]}[1]}{\normJ_\lambda}\right\rangle_{q,t}\\
  &=(1-t)^{-m}[p_\mu[Y]]\left\langle p_\pi[X],q^{\binom{m}{2}}\frac{\J_{[m]}[X]\J_{[m]}[Y]\cdot(t;q)_m}{(q;q)_m(t;q)_m}\right\rangle_{q,t}.
    \end{align*}
We conclude using \cref{lem J_m}.
    \end{proof}
\end{prop}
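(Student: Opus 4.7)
The plan is to exploit the definition of $\bfc^\pi_{\mu,\nu}$ through its generating function \eqref{eq def c} and specialize the third alphabet $Z$ to the single variable $1$. Under this specialization, $p_k[Z]=1$ for every $k\geq 1$, hence $p_\nu[Z]=1$ for every partition $\nu$. The right-hand side of \eqref{eq def c} therefore collapses to
\[
\sum_{m\geq 0} u^m \sum_{\pi,\mu\vdash m} \frac{\sum_{\nu\vdash m}\bfc^\pi_{\mu,\nu}(\alpha,\gamma)}{z_\pi(q,t)}\, p_\pi[X]\, p_\mu[Y],
\]
so the marginal sum $\sum_\nu \bfc^\pi_{\mu,\nu}$ appears directly as a coefficient.

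Next I would rewrite the left-hand side in terms of the integral form using $\JJack_\lambda = \J_\lambda/(1-t)^{|\lambda|}$ and $\normJt_\lambda = \gamma^{-2|\lambda|}\normJ_\lambda$ with $\gamma=t-1$, producing an overall factor $(1-t)^{-|\lambda|}$. By \cref{lem J specialized in 1}, the factor $\J_\lambda[1]$ vanishes unless $\lambda$ is the one-row partition $[m]$, in which case it equals $(t;q)_m$. So only $\lambda=[m]$ contributes at fixed size $m$. For this partition one reads off from the definitions $n([m])=0$, $n([m]')=\binom{m}{2}$, and from \eqref{eq j norm} the squared norm $\normJ_{[m]}=(q;q)_m(t;q)_m$.

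I would then expand $\J_{[m]}[X]$ and $\J_{[m]}[Y]$ in the power-sum basis via \cref{lem J_m} and use the multiplicativity identity $p_\pi\!\left[X\tfrac{1-t}{1-q}\right] = p_\pi\!\left[\tfrac{1-t}{1-q}\right] p_\pi[X]$ on each factor. Comparing with the $p_\pi[X]p_\mu[Y]$ expansion displayed above, I would extract
\[
\frac{\sum_{\nu\vdash m}\bfc^\pi_{\mu,\nu}}{z_\pi(q,t)} = \frac{q^{\binom{m}{2}}(q;q)_m}{(1-t)^m}\cdot\frac{p_\pi\!\left[\tfrac{1-t}{1-q}\right]p_\mu\!\left[\tfrac{1-t}{1-q}\right]}{z_\pi z_\mu},
\]
where $(q;q)_m$ and $(t;q)_m$ cancel nicely between $\J_{[m]}[1]$ and $\normJ_{[m]}$. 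Finally, multiplying by $z_\pi(q,t)=z_\pi\cdot p_\pi\!\left[\tfrac{1-q}{1-t}\right] = z_\pi / p_\pi\!\left[\tfrac{1-t}{1-q}\right]$ cancels the $p_\pi$-dependence entirely and yields the asserted formula.

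The hard part is purely bookkeeping: keeping straight the normalizations relating $\JJack_\lambda$, $\J_\lambda$, $\normJt_\lambda$, $\normJ_\lambda$, $z_\pi(q,t)$ and the reciprocal plethystic substitutions $\tfrac{1-t}{1-q}$ versus $\tfrac{1-q}{1-t}$. There is no deeper structural ingredient beyond the two lemmas on the one-row Macdonald polynomial $\J_{[m]}$; the rest is algebraic manipulation in the power-sum basis.
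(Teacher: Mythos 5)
Your proposal is correct and follows essentially the same approach as the paper's proof: specialize $Z=1$ in the generating function \eqref{eq def c}, use \cref{lem J specialized in 1} to reduce to the single term $\lambda=[m]$, read off $n([m])=0$, $n([m]')=\binom{m}{2}$, $\normJ_{[m]}=(q;q)_m(t;q)_m$, and then expand $\J_{[m]}$ via \cref{lem J_m} to extract the coefficient. The paper phrases the extraction via the scalar product $\langle p_\pi[X],\,\cdot\,\rangle_{q,t}$ and the coefficient operator $[p_\mu[Y]]$, whereas you compare power-sum coefficients directly, but this is only a cosmetic difference.
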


\begin{cor}
For any $m\geq 1$ and $\pi,\mu,\nu\vdash m$,
$$z_\mu\sum_{\nu\vdash m} \bfc^\pi_{\mu,\nu}(\alpha,\gamma)$$
     is a polynomial  in $\gamma$ and $\alpha$ with non-negative integer coefficients.
     \begin{proof}
     From \cref{prop marginal sums}, we get
     \begin{align*}
       z_\mu\sum_{\nu\vdash m} \bfc^\pi_{\mu,\nu}(\alpha,\gamma)
       &=\frac{q^{\binom{m}{2}}(q;q)_m }{(1-t)^m}\prod_{1\leq i\leq \ell(\mu)}\frac{1-t^{\mu_i}}{1-q^{\mu_i}}  \\
       &=\frac{q^{\binom{m}{2}}(q;q)_m }{(1-q)^{\ell(\mu)}(1-t)^{m-\ell(\mu)}} \prod_{1\leq i\leq \ell(\mu)}\frac{[\mu_i]_t}{[\mu_i]_q}.
     \end{align*}
     
          But $(q;q)_m=[m]_q!(1-q)^m.$ Hence,
     \begin{align*}
       z_\mu\sum_{\nu\vdash m} \bfc^\pi_{\mu,\nu}(\alpha,\gamma)
       &=q^{\binom{m}{2}}[m]_q! \frac{(1-q)^{m-\ell(\mu)}}{(1-t)^{m-\ell(\mu)}} \prod_{1\leq i\leq \ell(\mu)}\frac{[\mu_i]_t}{[\mu_i]_q}\\
       &=q^{\binom{m}{2}}[m]_q! \alpha^{m-\ell(\mu)}\prod_{1\leq i\leq \ell(\mu)}\frac{[\mu_i]_t}{[\mu_i]_q}.
     \end{align*}

         Note that 
         $[m]!_q\prod_{1\leq i \leq\ell(m)}\frac{1}{[\mu_i]_q}$ is divisible by the binomial 
         $\left[\begin{array}{c}
              m  \\
              \mu_1,\mu_2,\dots,\mu_{\ell(\mu)}
         \end{array}\right]_q.$
         Hence, $[m]!_q\prod_{1\leq i \leq\ell(m)}\frac{1}{[\mu_i]_q}$ is a positive 
 polynomial in $q$, and by consequence in $\alpha$ and $\gamma$. This finishes the proof.
     \end{proof}
\end{cor}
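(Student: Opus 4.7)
The plan is to apply \cref{prop marginal sums} and unfold the resulting closed form until every factor is manifestly a polynomial in $\alpha,\gamma$ with non-negative integer coefficients. First, I would substitute $p_\mu\!\left[\tfrac{1-t}{1-q}\right]=\prod_{i=1}^{\ell(\mu)}\tfrac{1-t^{\mu_i}}{1-q^{\mu_i}}$ into the expression given by \cref{prop marginal sums}. Using the elementary identities $1-q^k=(1-q)[k]_q$, $1-t^k=(1-t)[k]_t$, and $(q;q)_m=(1-q)^m[m]_q!$, together with $\alpha=(1-q)/(1-t)$, the combined powers of $(1-q)$ and $(1-t)$ collapse into $\alpha^{m-\ell(\mu)}$, giving
\[
z_\mu\sum_{\nu\vdash m}\bfc^\pi_{\mu,\nu}(\alpha,\gamma)=q^{\binom{m}{2}}\,\alpha^{m-\ell(\mu)}\,[m]_q!\,\prod_{i=1}^{\ell(\mu)}\frac{[\mu_i]_t}{[\mu_i]_q}.
\]

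To handle the ratio $[m]_q!/\prod_i[\mu_i]_q$, I would use $[\mu_i]_q!=[\mu_i]_q[\mu_i-1]_q!$ to rewrite it as
\[
\frac{[m]_q!}{\prod_i[\mu_i]_q}=\left[\begin{array}{c}m\\ \mu_1,\dots,\mu_{\ell(\mu)}\end{array}\right]_q\prod_{i=1}^{\ell(\mu)}[\mu_i-1]_q!,
\]
where the $q$-multinomial coefficient and each $q$-factorial are classically known to be polynomials in $q$ with non-negative integer coefficients. Combined with $\prod_i[\mu_i]_t\in\mathbb{Z}_{\geq 0}[t]$ and the monomials $q^{\binom{m}{2}}$ and $\alpha^{m-\ell(\mu)}$, we land on a product of objects each of which, after the change of variables $q=1+\gamma\alpha$ and $t=1+\gamma$, is a polynomial with non-negative integer coefficients in $\alpha$ and $\gamma$: indeed, applying the binomial theorem to any monomial $q^j=(1+\gamma\alpha)^j$ or $t^j=(1+\gamma)^j$ produces such a polynomial, and this class is closed under products and sums.

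I do not anticipate any real obstacle: once \cref{prop marginal sums} is available, the statement reduces to a concrete algebraic manipulation whose only nontrivial input is the classical positivity of the Gaussian multinomial coefficient. The slightly surprising feature, already visible in \cref{prop marginal sums}, is that the marginal sum is actually independent of $\pi$, which is why the bound is uniform in $\pi$.
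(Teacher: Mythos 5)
Your proof follows essentially the same route as the paper: apply \cref{prop marginal sums}, collapse the powers of $1-q$ and $1-t$ into $\alpha^{m-\ell(\mu)}$, and invoke the classical positivity of the $q$-multinomial coefficient (you make the divisibility slightly more explicit by writing the quotient as $\prod_i [\mu_i-1]_q!$, but this is the same idea). The final step of translating $q$- and $t$-positivity into $(\alpha,\gamma)$-positivity via $q=1+\gamma\alpha$, $t=1+\gamma$ is correct and matches what the paper leaves implicit.
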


\subsubsection{Integrality for the case $q=t$}

When $q=t$ (equivalently $\alpha=1)$, Macdonald polynomials are Schur functions up to a scalar factor, (see \cite[Chapter VI, Remark 8.4]{Macdonald1995})
$$\mathfrak J^{(\alpha=1,\gamma)}_\lambda=H_\lambda(t) s_\lambda,$$
where $t=\gamma+1$ as usual, and where 
$$H_\lambda(t)=\prod_{\Box\in \lambda} [a_\lambda(\Box)+\ell_\lambda(\Box)+1]_t $$
is a $t$-deformation of the hook product.
Moreover, it follows from \cref{eq j norm} and \cref{eq norm j t} that
$$\widetilde j^{(\alpha=1,\gamma)}_\lambda=H_\lambda(t)^2.$$

We recall that the expansion of Schur functions in the power-sum basis are given by the irreducible characters of the symmetric group $\chi^\lambda$, see e.g \cite[Chapter I]{Macdonald1995}
$$s_\lambda=\sum_{\mu\vdash |\lambda|}\frac{\chi^\lambda(\mu)}{z_\mu}p_\mu.$$

Hence we obtain the following formula for the coefficient of the generalized Matchings-Jack conjecture.
\begin{prop}\label{prop c alpha=1}
    For any partitions $\pi,\mu$ and $\nu$, one has
    $$\bfc^\pi_{\mu,\nu}(\alpha=1,\gamma)=\frac{1}{z_\mu z_\nu} \sum_{\lambda\vdash |\pi|} t^{n(\lambda')-2n(\lambda)}H_\lambda(t)\chi^\lambda(\pi)\chi^\lambda(\mu)\chi^\lambda(\nu).$$
\end{prop}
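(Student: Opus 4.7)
The plan is to evaluate the defining generating function \eqref{eq def c} at $\alpha=1$, i.e.\ $q=t$, and read off the coefficient of $p_\pi[X]p_\mu[Y]p_\nu[Z]$. First, I record the relevant specializations at $q=t$: we have $\JJack_\lambda = H_\lambda(t)\, s_\lambda$ and $\normJt_\lambda = H_\lambda(t)^2$ (from the remarks just before the proposition), and $z_\pi(q,t) = z_\pi\, p_\pi[(1-q)/(1-t)]$ reduces to $z_\pi$ since $p_k[(1-q)/(1-t)]\big|_{q=t} = 1$ for every $k\geq 1$.

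Substituting these into the left-hand side of \eqref{eq def c}, the factor $H_\lambda(t)^3/H_\lambda(t)^2 = H_\lambda(t)$ collapses, so the LHS becomes
\begin{equation*}
\sum_{\lambda\in\mathbb Y} u^{|\lambda|}\, t^{n(\lambda')-2n(\lambda)}\, H_\lambda(t)\, s_\lambda[X]\, s_\lambda[Y]\, s_\lambda[Z].
\end{equation*}
Next, I expand each Schur factor in the power-sum basis using $s_\lambda = \sum_{\rho\vdash|\lambda|} \chi^\lambda(\rho)\, p_\rho/z_\rho$, so the triple product becomes a sum over triples $(\pi,\mu,\nu)$ of partitions of $|\lambda|$ with coefficient $\chi^\lambda(\pi)\chi^\lambda(\mu)\chi^\lambda(\nu)/(z_\pi z_\mu z_\nu)$.

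Finally, I compare with the right-hand side of \eqref{eq def c}, which at $q=t$ reads $\sum_m \sum_{\pi,\mu,\nu\vdash m} u^m\, \bfc^\pi_{\mu,\nu}(1,\gamma)\, p_\pi[X]p_\mu[Y]p_\nu[Z]/z_\pi$. Extracting the coefficient of $u^m\, p_\pi[X]p_\mu[Y]p_\nu[Z]$ for $\pi,\mu,\nu\vdash m$ and multiplying by $z_\pi$ yields exactly the claimed formula. There is no real obstacle: all three ingredients (the $q=t$ specialization of $\JJack_\lambda$ and $\normJt_\lambda$, the character expansion of $s_\lambda$, and the triviality of $z_\pi(q,t)$ at $q=t$) are recalled in the text, so the proof is a direct substitution plus extraction of coefficients on the orthogonal basis $\{p_\pi[X]p_\mu[Y]p_\nu[Z]\}$.
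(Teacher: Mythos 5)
Your proof is correct and is essentially the argument the paper intends: the proposition is stated as a direct consequence of the recalled specializations $\JJack_\lambda = H_\lambda(t)s_\lambda$, $\normJt_\lambda = H_\lambda(t)^2$, and $z_\pi(q,t)\big|_{q=t} = z_\pi$, together with the character expansion of Schur functions, and your substitution into \eqref{eq def c} followed by coefficient extraction is exactly that. No gaps.
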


We deduce the integrality of the coefficients in the parameter $\gamma$.
\begin{cor}
 For any partitions $\pi,\mu$ and $\nu$ of size $m$, the normalized coefficient \\
    $(1+\gamma)^{m(m-1)}z_\mu z_\nu \bfc^\pi_{\mu,\nu}(\alpha=1,\gamma)$ is a polynomial in $\gamma$ with integer coefficients.
    \begin{proof}
        We use \cref{prop c alpha=1} and the fact that $n(\lambda')-2n(\lambda)$ is minimal when $\lambda=[1^m]$ and the corresponding minimum is $m(m-1).$
    \end{proof}
\end{cor}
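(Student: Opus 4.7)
The plan is to work directly from \cref{prop c alpha=1}, which gives
\[z_\mu z_\nu\,\bfc^\pi_{\mu,\nu}(1,\gamma)=\sum_{\lambda\vdash m}t^{n(\lambda')-2n(\lambda)}H_\lambda(t)\chi^\lambda(\pi)\chi^\lambda(\mu)\chi^\lambda(\nu),\]
where $t=1+\gamma$. Multiplying by $(1+\gamma)^{m(m-1)}=t^{m(m-1)}$ turns this into
\[\sum_{\lambda\vdash m}t^{m(m-1)+n(\lambda')-2n(\lambda)}H_\lambda(t)\chi^\lambda(\pi)\chi^\lambda(\mu)\chi^\lambda(\nu),\]
so the task reduces to verifying two things summand by summand: that the power of $t$ is a non-negative integer, and that the remaining factors are a polynomial in $t$ with integer coefficients.

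First I would note that the irreducible symmetric group characters $\chi^\lambda(\pi),\chi^\lambda(\mu),\chi^\lambda(\nu)$ are integers, and that $H_\lambda(t)=\prod_{\Box\in\lambda}[a_\lambda(\Box)+\ell_\lambda(\Box)+1]_t$ is a product of $t$-integers, hence a polynomial in $t=1+\gamma$ with non-negative integer coefficients. The only remaining step is to show that the exponent $m(m-1)+n(\lambda')-2n(\lambda)$ is non-negative for every $\lambda\vdash m$. Since $n(\lambda')=\sum_{\Box\in\lambda}a'_\lambda(\Box)\geq 0$, it suffices to establish the elementary bound $n(\lambda)\leq\binom{m}{2}$. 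Using $n(\lambda)=\sum_i(i-1)\lambda_i$, one checks that this bound holds with equality precisely when $\lambda=[1^m]$ (all cells stacked in one column gives $0+1+\cdots+(m-1)=m(m-1)/2$), so that the exponent of $t$ attains its minimum value $0$ there and is strictly positive elsewhere.

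Combining these observations, each summand is a non-negative integer power of $t$ times a polynomial in $t$ with non-negative integer coefficients times an integer, hence a polynomial in $\gamma$ with integer coefficients; summing over $\lambda\vdash m$ preserves integrality. There is no serious obstacle here: the argument is a direct calculation, the one nontrivial input being the tightness of the factor $(1+\gamma)^{m(m-1)}$, which is exactly calibrated to clear the negative power of $t$ coming from the $\lambda=[1^m]$ term.
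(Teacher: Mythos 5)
Your proposal is correct and follows the same route as the paper's own argument: apply \cref{prop c alpha=1}, observe that $H_\lambda(t)$ is a product of $t$-integers and the $\chi^\lambda$ are integers, and calibrate the $(1+\gamma)^{m(m-1)}$ prefactor against the minimal exponent of $t$, which occurs at $\lambda=[1^m]$. You supply the elementary verification (splitting into $n(\lambda')\ge 0$ and $n(\lambda)\le\binom{m}{2}$, tight simultaneously at $\lambda=[1^m]$) that the paper leaves implicit; you also correctly compute the minimum of $n(\lambda')-2n(\lambda)$ as $-m(m-1)$, where the paper's phrasing drops the sign.
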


\bibliographystyle{amsalpha}
\bibliography{biblio.bib}
   
\end{document}